\documentclass[10pt,final]{siamltex}
\usepackage{amsmath}
\usepackage{bm}
\usepackage{amssymb,version}
\usepackage{cases}
\usepackage{color}
\usepackage{verbatim}
\usepackage{multirow}
\usepackage{graphicx}
\usepackage{subfigure}
\usepackage{graphics}
\usepackage{epsfig}
\usepackage{enumitem}
\newtheorem{remark}{Remark}[section]

\usepackage{silence}
\usepackage{hyperref}
\allowdisplaybreaks
\begin{document}
\graphicspath{{figures/},}
    \title{Stability and error analysis of fully discrete original energy-dissipative and length-preserving scheme for the Landau-Lifshitz-Gilbert equation
\thanks{This work is supported by the National Natural Science Foundation of China (Grant Nos. 12271302, 12131014, 12271240, 12426312) and Shandong Provincial Natural Science Foundation for Outstanding Youth Scholar (Grant No. ZR2024JQ030)}, and NSF (Grant No. DMS-2309548)}
 \author{Binghong Li
        \thanks{School of Mathematics, Shandong University, Jinan, Shandong, 250100, P.R. China. Email: binghongsdu@163.com}.
        \and Xiaoli Li
        \thanks{Corresponding author. School of Mathematics and State Key Laboratory of Cryptography and Digital Economy Security, Shandong University, Jinan, Shandong, 250100, P.R. China. Email: xiaolimath@sdu.edu.cn}.
        \and Cheng Wang
        \thanks{Mathematics Department, University of Massachusetts, North Dartmouth, MA 02747 USA. Email: cwang1@umassd.edu}.
        \and Jiang Yang\thanks{Department of Mathematics, SUSTech International Center for Mathematics \& National Center for Applied Mathematics Shenzhen (NCAMS), Southern University of Science and Technology,
			Shenzhen, China. Email:yangj7@sustech.edu.cn.}
}

\maketitle
\begin{abstract} 
The Landau-Lifshitz-Gilbert (LLG) equation, regarded as a gradient flow with manifold constraint, is the fundamental model describing magnetization dynamics in ferromagnetic materials. This equation exhibits highly nonlinear behavior and involves a non-convex manifold constraint \(|\mathbf{m}| = 1\), along with energy dissipation property. 
It is well known that the normalized tangent plane method is able to simultaneously achieve the non-convex manifold constraint and original energy dissipation. However, the associated computational cost of this numerical approach is exceedingly high. By contrast, the projection method is more straightforward to implement, while it often compromises the inherent energy dissipative property of the continuous model, and the error analysis turns out to be even more challenging. In this work, we first construct a linear and fully discrete finite difference numerical scheme, based on the projection method for the LLG equation, which is capable of simultaneously preserving the non-convex manifold constraint \(|\mathbf{m}| = 1\) and an unconditional original energy dissipation.
In the error analysis, the classical theoretical technique becomes ineffective, due to the presence of the nonlinear Laplacian term, which in turn poses a significant challenge. To overcome this subtle difficulty, we carefully rewrite the numerical method in an equivalent weak form, in which a point-wise length preserving feature of the numerical solution plays an essential role. Based on such a reformulation, a nonlinear Laplacian estimate is avoided, and the rest nonlinear error bounds could be derived with the help of discrete Sobolev interpolation, as well as a Law-of-Cosine style estimate of the numerical errors at the renormalization stage. 
As a result of these estimates in the reformulated weak form, an optimal convergence rate could be theoretically established. 
In our knowledge, this numerical method is the first linear algorithm that preserves the following combined theoretical properties: (i) point-wise length preservation, (ii) unconditional original energy dissipation, (iii) a theoretical justification of convergence analysis and optimal rate error estimate. Some numerical experiments are presented to verify the theoretical findings and illustrate the robustness and effectiveness of the proposed method.
 \end{abstract}

 \begin{keywords}
Landau-Lifshitz equation; original energy-dissipative; length preserving; stability; error estimates
 \end{keywords}
   \begin{AMS}
35Q56; 65M12; 65M15
    \end{AMS}
  
 \section{Introduction}
 The magnetization structures in the form of topological solitons have become an important research topic, following the fabrication of various ferromagnetic materials. Static solutions of the Landau-Lifshitz type equation play a crucial role in the study of magnetization structures in various ferromagnetic materials \cite{jia2025electrically, jiang2015blowing, romming2013writing, nagaosa2013topological}. In this work we consider the Landau–Lifshitz–Gilbert (LLG) equation by taking the form of a time-dependent nonlinear equation as follows \cite{cai2022second, guo1993landau, landau1992theory}
  \begin{align}
   \setlength{\abovedisplayskip}{4pt} 
     & 
    \textbf{m}_t = -\beta \textbf{m} \times \mathbf{H_{eff}}
- \gamma  \textbf{m} \times ( \textbf{m} \times \mathbf{H_{eff}} ), \quad  \ {\rm in} \ \Omega \times J, 
  \label{e_original model1} 
\\
  & \mbox{with} \quad  
  \textbf{m}(\textbf{x},0 ) = \textbf{m}_0(\textbf{x}) , \text{ with } |\textbf{m}_0(\textbf{x}) |=1, \ {\rm in}\ \Omega, 
  \setlength{\belowdisplayskip}{4pt} 
  \label{e_initial condition}
\end{align} 
subject to either homogeneous Neumann or periodic boundary conditions, and $\mathbf{H_{eff}}$ is the effective field. In the above system, $\textbf{m}=(m_1,m_2,m_3)^T$ describes the magnetization in continuum ferromagnets, $\Omega$ is an open bounded domain in $\mathbb{R}^d$ with $d \in \{ 1,2,3\}$, $J$ denotes $(0, T]$ for some $T >0$, $\gamma>0$ is the Gilbert damping parameter and $\beta$ is an exchange parameter. When $\beta \neq 0$, it is often referred to as the Landau-Lifshitz-Gilbert equation. The solution of \eqref{e_original model} preserves a point-wise magnitude, i.e.
\begin{equation}\label{e_length preserving}
\setlength{\abovedisplayskip}{4pt} 
\frac{d}{dt} | \textbf{m}(\textbf{x},t)  |^2=0,
\setlength{\belowdisplayskip}{4pt} 
\end{equation}  
which, together with  \eqref{e_initial condition}, implies that $ | \textbf{m}(\textbf{x},t )|=1, \,\forall (x,\,t)$. It also satisfies the following energy dissipation law:
   \begin{align} 
   \setlength{\abovedisplayskip}{4pt} 
    & 
\frac{d E}{dt}  = - \gamma \| \textbf{m} \times \mathbf{H_{eff}} \|^2,\quad \mathbf{H_{eff}} = -\frac{\delta E}{\delta \textbf{m}}, \label{e_energy dissipative law}  
\\
  & \mbox{where} \quad  
	E(\textbf{m})=   \int_{\Omega} ( \frac{1}{2}|\nabla \textbf{m}|^2+\mathcal{D} (\nabla \times \textbf{m}) \cdot \textbf{m}  + K_u\Phi(\mathbf{m})
	-\mu_0  \textbf{H}_{\mathrm{ex}}\cdot \textbf{m} )d\textbf{x}. 
   \setlength{\belowdisplayskip}{4pt} 
	\label{e_general energy} 
\end{align} 
Here the first term denotes the exchange energy, which promotes the parallel alignment of neighboring magnetizations and $\mathcal{D}$ represents the Dzyaloshinskii-Moriya interaction (DMI) strength, which favors a perpendicular arrangement. The term $\Phi(\mathbf{m})$ corresponds to the anisotropy energy, and $K_u$ denotes the coefficient of the uniaxial anisotropy. The final term represents the Zeeman energy arising from the external magnetic field  $\textbf{H}_{\mathrm{ex}}= (0,0,1)$, with $\mu_0$ denoting the strength of an external magnetic field. By neglecting the effects of anisotropy, DMI energy and external fields, we consider the effective magnetic field as $\mathbf{H_{eff}} = \Delta \mathbf{m}$. In turn, system \eqref{e_original model1} is reduced to
\begin{equation}\label{e_original model}
 \setlength{\abovedisplayskip}{4pt} 
  \textbf{m}_t = -\beta \textbf{m} \times \Delta \textbf{m}
- \gamma  \textbf{m} \times ( \textbf{m} \times \Delta \textbf{m} ). 
\setlength{\belowdisplayskip}{4pt} 
\end{equation}

As be well known, the LLG equation \eqref{e_original model} admits several equivalent formulations. Among these choices, the three forms that have garnered the most significant attention in the numerical analysis over the past few decades are listed as follows.
   \begin{align}
    \setlength{\abovedisplayskip}{4pt} 
   &\textbf{(i)}\,\displaystyle 
         \textbf{m}_t - \gamma\Delta \textbf{m} = -\beta \textbf{m} \times \Delta \textbf{m}\, \textcolor{black}{ + }\, \gamma |\nabla\textbf{m}|^2\textbf{m},\label{eq1.6}\\
   &\textbf{(ii)}\,\displaystyle
        \textbf{m}_t + \lambda \textbf{m} \times \textbf{m}_t 
        = (1 + \lambda^2)\textbf{m}\times \Delta \textbf{m},\label{eq1.7} \\
     &\textbf{(iii)}\,\displaystyle
        \lambda \textbf{m}_t - \textbf{m} \times \textbf{m}_t 
        = (1 + \lambda^2)(\Delta\textbf{m} + |\nabla \textbf{m}|^2\textbf{m}),\label{eq1.8}
     \setlength{\belowdisplayskip}{4pt} 
    \end{align} 
where $\lambda > 0$ is the dimensionless Gilbert damping constant \cite{akrivis2021higher}, which can be expressed in terms of $\beta, \,\gamma$. In terms of the numerical solution of the LLG equation, preserving the intrinsic energy-dissipative and length-preserving properties throughout the evolution process remains a fundamental challenge. Ensuring these properties is of significant importance for maintaining the numerical accuracy and stability. Currently, three primary strategies have been employed: i) The penalty method \cite{badia2011finite,liu2000approximation,prohl2001computational} relaxes the constraint \(|\textbf{m}| = 1\) by incorporating a penalty term into the original equation. While this approach simplifies the implementation, it generally fails to strictly enforce the constraint condition. ii) The projection method \cite{cohen1989relaxation, kim2017mimetic, li2005numerical, suess2002time, yang1998dynamical} preserves the constraint $\displaystyle \textbf{m}^n = \mathbf{\widetilde{m}}^n/\,|\mathbf{\widetilde{m}}^n|$ by projecting the intermediate solution onto the unit sphere. Although straightforward to implement, this approach can compromise the original energy dissipation property inherent to the PDE system \eqref{e_original model}. iii) The Lagrange multiplier method \cite{cheng2023length} introduces an auxiliary variable to maintain the constraint condition, which requires the solution of a nonlinear algebraic equation and poses great challenges in terms of error analysis. Given its simplicity and efficiency, we prioritize the projection method approach in our work. Subsequently, we focus on addressing the issue of ensuring original energy dissipation in the projection method.

In fact, there have been a series of works related to projection method. E and Wang established a first-order error estimate for the projection scheme in \cite{weinan2001numerical}.  An et al. \cite{an2021optimal} presented an error estimate for first- and second-order semi-implicit projection finite difference methods, and their analysis relies on inverse inequalities and requires $h^2\le \Delta t\le h^{1+\epsilon_0}$. Such a constraint has been improved in a more recent work~\cite{an2022analysis}, to the scale of $\Delta t = O(\epsilon_0 h)$ with some small $\epsilon_0$ for a first-order finite element projection scheme. On the other hand,  Gui et al. \cite{gui2022convergence} derived an optimal-order error estimate for a linearly implicit, lumped mass FEM for the heat flow of harmonic maps on rectangular mesh under the condition $ \Delta t \geq  \kappa h^{r+1} $ for some $ r >1$, where $\kappa$ is any positive constant. Cai et al. \cite{cai2022second, Cai2023a} constructed the second order accurate numerical scheme for the Landau-Lifshitz-Gilbert equation with large damping parameters. The aforementioned methods fail to satisfy the energy dissipation law, thereby preventing the achievement of the desired stability results, which are crucial for rigorous error analysis. Moreover, in practical numerical schemes for the LLG equation, preserving the original energy dissipativity and stability property is of paramount physical significance. For instance, in the process of solving the skyrmion relaxation algorithm, maintaining the intrinsic energy dissipation property of the numerical method applied to \eqref{e_original model} is essential for ensuring convergence to a local energy minimum \cite{komineas2015skyrmion}.

To solve this issue, Li et al. \cite{li2026class} developed a class of higher-order implicit-explicit schemes using the generalized scalar auxiliary variable approach for the Landau-Lifshitz equation. The constructed schemes satisfy a modified energy dissipation law, while the original energy stability and dissipative property have not been theoretically justified. In addition, the non-orthogonal projection (or renormalization) method was proposed for \eqref{eq1.7} by Alouges and Jaisson \cite{alouges2006convergence}. At each time step, this method requires constructing a new finite element space that is pointwise orthogonal to the finite element solution obtained at the previous step. The convergence of this procedure has been rigorously established in \cite{alouges2006convergence}. Additionally, Alouges devised a discrete non-orthogonal projection scheme for equation \eqref{eq1.8} and demonstrated that this scheme preserves the energy dissipation and stability properties inherent to the continuous system \cite{alouges2008new}. Subsequently, An et al. \cite{an2025optimal} conducted a theoretical analysis of the normalized tangent plane method. While the renormalization approach provides a theoretical derivation of the original energy dissipation, it necessitates constructing a new finite element space at each time step, which incurs significant computational costs \cite{alouges2008new, an2025optimal}.

In fact, the energy dissipative structure on the unit sphere for the LLG equation can also be analyzed from the perspective of the gradient flow of vector fields with manifold constraint: 
\begin{equation}
\setlength{\abovedisplayskip}{4pt} 
    \begin{array}{l}
    \displaystyle
        \textbf{m}_t = P(\mathbf{m}) \,\mathbf{H_{eff}},   \quad 
        P(\mathbf{m}) = \gamma( I - \frac{\mathbf{m}\mathbf{m}^{T}}{|\mathbf{m}|^2}) - \beta \mathbf{m}\times\cdot,\\
         |\mathbf{m}(\mathbf{x},0)| = 1.
    \end{array}
\setlength{\belowdisplayskip}{4pt} 
\end{equation} 
From this perspective, Du et al. \cite{du2025semi} proposed the semi-implicit projection (SIP) method that preserves both the manifold constraint and the original energy dissipation:
\begin{equation}\label{semidiscrete}
  \setlength{\abovedisplayskip}{4pt} 
\left\{
    \begin{array}{l}
    \displaystyle
        \frac{\widetilde{\mathbf{m}}^{n+1} - \mathbf{m}^{n}}{\Delta t} = P(\mathbf{m}^n) \,\mathbf{H_{eff}}^{n+1},   \\
     \displaystyle    
       \mathbf{m}^{n+1} = \frac{\widetilde{\mathbf{m}}^{n+1}}{|\widetilde{\mathbf{m}}^{n+1}|}.
    \end{array} \right. 
    \setlength{\belowdisplayskip}{4pt} 
\end{equation} 
However, an extension of this approach to the fully discrete case and a rigorous error analysis have remained a theoretically significant challenge. 

The main purpose of this paper is to present and analyze the fully discrete finite difference scheme based on the semi-discrete approach \eqref{semidiscrete}. Our main contributions are outlined as follows. 
\begin{itemize}
    \item We first construct linear and fully discrete finite difference scheme based on the projection method for the LLG equation. The proposed scheme is straightforward to implement and capable of simultaneously enforcing the non-convex constraint \( |\mathbf{m}| = 1 \), ensuring an unconditional original energy dissipation and maintaining numerical stability in the discrete $H^1$ norm.
    \item Based on an equivalent weak reformulation of the original numerical algorithm, with the discrete summation by parts to avoid the highly complicated terms, we have rigorously derived an optimal error estimate for the proposed scheme in the discrete  \(L^\infty(0,\,T;\,L^2(\Omega))\) and \(L^2(0,\,T;H^1(\Omega))\)  norms.  These results hold under mild grid ratio conditions: in two-dimensional (2D) cases with \(\textcolor{black}{h^2 \lesssim \Delta t \lesssim h^{\epsilon_0}}\) and in three-dimensional (3D) cases with \(h^2 \lesssim \Delta t \lesssim h^{1+\epsilon_0}\) and \(h \leq h_0\), where $0<\epsilon_0<1$ and $h_0$ is a positive constant.  
\end{itemize}

Specifically, the error analysis for the fully discrete finite difference scheme 
is highly nontrivial, due to the complicated nonlinear structures. In the derivation of error estimates 
special care must be taken to handle the term $\gamma\mathbf{m}^{n-1}\times(\mathbf{m}^{n-1}\times\Delta\widetilde{\mathbf{m}}^n)$. In fact, this issue should be addressed by taking an inner product with the error evolutionary equation by \(\Delta \widetilde{\mathbf{e}}^n\), where $\widetilde{\mathbf{e}}^n = \mathbf{m}_e^n - \widetilde{\mathbf{m}}^n$, $\mathbf{e}^n = \mathbf{m}_e^n - \mathbf{m}^n$, $\mathbf{m}_e^n$, and $\mathbf{m}^n$, $\widetilde{\mathbf{m}}^n$ stand for the exact and numerical solutions, respectively. However, the best error estimate at the renormalization stage \cite{akrivis2021higher, gui2022convergence}, in the gradient norm, is stated as 
\begin{equation} 
  \setlength{\abovedisplayskip}{4pt}   
  \|\nabla \mathbf{e}^n\| \leq \|\nabla \widetilde{\mathbf{e}}^n\|  + C \|\widetilde{\mathbf{e}}^n\| .  
\setlength{\belowdisplayskip}{4pt}  
  \label{gradient error est-previous-0} 
\end{equation} 
However, an $H^1$ convergence analysis could hardly go through based on this renormalization estimate, which comes from the fact that a temporal differentiation has to be taken into consideration in the renormalization stage. In fact, the constant appearing in~\eqref{gradient error est-previous-0} has to be sharpened to $C \Delta t$ to make a closed $H^1$ error estimate theoretically available. This subtle issue indicates an essential difficulty in the convergence analysis and error estimate for the projection-style method. 

As an alternate approach to overcome this subtle difficulty, if we just simply take an inner product with \(\widetilde{\mathbf{e}}^n\), an error bound associated with the term $\gamma\mathbf{m}^{n-1}\times(\mathbf{m}^{n-1}\times\Delta\widetilde{\mathbf{m}}^n)$ becomes another challenging issue, due to the highly complicated nonlinear structure, and insufficient diffusion coefficient to bound the nonlinear errors. In this work, we first give the weak form for the original finite difference scheme, derive a relation between interpolation and discrete gradient operators, and use the discrete summation by parts to establish an equivalent reformulation to avoid the highly complicated term. In fact, the point-wise length preserving feature of the numerical solution plays an essential role in the derivation of the equivalent weak form. Based on such a reformulation, a nonlinear Laplacian estimate is avoided, and the rest nonlinear error bounds could be derived with the help of discrete Sobolev interpolation, as well as a Law-of-Cosine style estimate of the numerical errors at the renormalization stage. These estimates in the reformulated weak form in turn lead to a theoretical justification of an optimal convergence rate. 
Moreover, in comparison with the convergence analysis presented in \cite{an2021optimal, gui2022convergence}, a few more refined estimates in terms of the intermediate numerical error induced by the spherical projection is reported in our work, in both the discrete \(L^2\) and \(H^1\) norms, which comes from the fact the intermediate stage numerical solution has a point-wise length greater than 1. 

In our knowledge, this numerical method is the first linear numerical scheme to the LLG equation that preserves the following combined theoretical properties: (i) point-wise length preservation, (ii) unconditional original energy dissipation, (iii) a theoretical justification of convergence analysis and optimal rate error estimate.



The paper is organized as follows. In Section 2, we review some preliminary notations and estimates. The fully discrete finite difference scheme is presented, and an unconditional energy dissipation and stability is established in Section 3. In Section 4, we provide a rigorous error analysis for the proposed numerical scheme. In Section 5, some numerical experiments are carried out using the constructed scheme. Finally, some concluding remarks are made in Section 6. 
  \section{Some preliminaries}
In this section, we review some preliminary notations and estimates, which will be used frequently in the later sections. Throughout the paper, we use $C$, with or without subscript, to denote a positive constant, which could have different values at different appearances.

We use the standard notations $L^2(\Omega)$, $H^k(\Omega)$ and $W^{k,p}(\Omega)$ to denote the usual Sobolev spaces over $\Omega$. In particular, $\| \cdot \|$ and $(\cdot,\cdot)$ are used to denote the norm and the inner product in $L^2(\Omega)$, respectively. The vectors and vector spaces will be indicated by boldface type.

For simplicity, we only consider the unit cube domain $\Omega = (0,1)^3$. Let $x_i = ih_x,\,y_j = jh_z,\,z_k = kh_z$ for $0 \leq i \leq N_x,\,0\leq j \leq N_y,\,0\leq k\leq N_z$ define a mesh on $\Omega$ with the mesh sizes $h_x = 1/N_x,\,h_y=1/N_y,$ and $h_z = 1/N_z$. Set $\Delta t = T/N,\,t^n = n\Delta t$ and let $\mathbf{m}^n_{i,j,k}$ be the numerical approximation of $\mathbf{m}(x_i,y_j,z_k,t^n)$. In this paper, we primarily consider periodic boundary conditions, which are specified as follows (in the $z$ direction):
\begin{equation}\label{boundarycondition}
  \setlength{\abovedisplayskip}{4pt}  
\begin{array}{ll}
     \mathbf{m}_{-1,j,k} = \mathbf{m}_{N_x-1,j,k}, \, \, \, \mathbf{m}_{N_x,j,k} = \mathbf{m}_{0,j,k}, \, \, \, 
    \mathbf{m}_{i,-1,k} = \mathbf{m}_{i,N_y-1,k}, \, \, \, \mathbf{m}_{i,N_y,k} = \mathbf{m}_{i,0,k}, \\
    \mathbf{m}_{i,j,-1} = \mathbf{m}_{i,j,N_z-1}, \, \, \, \mathbf{m}_{i,j,N_z} = \mathbf{m}_{i,j,0}. 
\end{array}
  \setlength{\belowdisplayskip}{4pt}  
\end{equation}
An extension to the case of homogeneous Neumann boundary condition would be straightforward. Furthermore, the computational grid and the associated grid function space are defined as 
\begin{equation*}
  \setlength{\abovedisplayskip}{4pt}  
\aligned
    \mathcal{T}_h &= \{(i,j,k)|\,i=0,\cdots,N_x-1,\,j=0,\cdots,N_y-1,\,k=0,\cdots,N_z-1\},\\
    \mathcal{M}_h &= \{m_{i,j,k}|\,(i,j,k) \in \mathcal{T}_h\},~~~~ \mathcal{M}^3_h = \mathcal{M}_h\times \mathcal{M}_h\times \mathcal{M}_h.
\endaligned
  \setlength{\belowdisplayskip}{4pt}  
\end{equation*}
Based on the above notations, for $f \in \mathcal{M}_h$, we denote
\begin{equation*}
  \setlength{\abovedisplayskip}{4pt}  
\begin{aligned} 
  & 
    \nabla^x_h f_{i+1/2,j,k} = \frac{f_{i+1,j,k} - f_{i,j,k}}{h_x},\,\nabla^y_h f_{i,j+1/2,k} = \frac{f_{i,j+1,k} - f_{i,j,k}}{h_y},\,\nabla^z_h f_{i,j,k+1/2} = \frac{f_{i,j,k+1} - f_{i,j,k}}{h_z},
\\
  & \mbox{and} \quad 
    \nabla_hf = (\nabla_h^x f,\,\nabla^y_hf,\,\nabla_h^zf)^T. 
\end{aligned} 
  \setlength{\belowdisplayskip}{4pt}  
\end{equation*}
Then for $\boldsymbol{f} = (f^x,f^y,f^z) \in \mathcal{M}^3$, the discrete gradient operator becomes 
\begin{equation*}
\setlength{\abovedisplayskip}{4pt}  
    \nabla_h\boldsymbol{f} = 
    \left[ \begin{array}{ccc}
         \nabla_h^x f^x& \nabla_h^y f^x & \nabla_h^zf^x\\
         \nabla_h^x f^y& \nabla_h^y f^y & \nabla_h^zf^y\\
         \nabla_h^x f^z& \nabla_h^y f^z & \nabla_h^zf^z
    \end{array}\right], 
\setlength{\belowdisplayskip}{4pt}  
\end{equation*}
and $\nabla_h\boldsymbol{f}:\nabla_h \boldsymbol{g}$ denotes the operation of multiplying the elements at the corresponding positions of the matrices and then summing them up. In turn, the standard second-order central difference operator turns out to be 
\begin{equation*}
\setlength{\abovedisplayskip}{4pt}   
    \Delta_h \boldsymbol{f}_{i,j,k} = \frac{\boldsymbol{f}_{i+1,j,k} - 2\boldsymbol{f}_{i,j,k} + \boldsymbol{f}_{i-1,j,k}}{h_x} + \frac{\boldsymbol{f}_{i,j+1,k} - 2\boldsymbol{f}_{i,j,k} + \boldsymbol{f}_{i,j-1,k}}{h_y} + \frac{\boldsymbol{f}_{i,j,k+1} - 2\boldsymbol{f}_{i,j,k} + \boldsymbol{f}_{i,j,k-1}}{h_z}. 
 \setlength{\belowdisplayskip}{4pt}  
\end{equation*}
Next, we introduce a discrete $\ell^2$ inner product and several discrete norms. For grid functions $\boldsymbol{f},\,\boldsymbol{g} \in \mathcal{M}^3_h$, the discrete $\ell^2$ inner product is defined as 
\begin{equation*}
  \setlength{\abovedisplayskip}{4pt}   
\aligned
    \langle \boldsymbol{f},\boldsymbol{g} \rangle &= h^3\sum_{\mathbf{I} \in \mathcal{T}_h} \boldsymbol{f_I} \cdot\boldsymbol{g_I}, \ 
    \langle \nabla_h \boldsymbol{f},\nabla_h\boldsymbol{g} \rangle &= h^3\sum_{\mathbf{I} \in \mathcal{T}_h} \nabla_h \boldsymbol{f_I} : \nabla_h\boldsymbol{g_I},
\endaligned
  \setlength{\belowdisplayskip}{4pt}  
\end{equation*}
where a uniform mesh of $h=h_x=h_y=h_z$ is considered, the discrete $\ell^2$ norm is given by 
\begin{equation*}
   \setlength{\abovedisplayskip}{4pt}  
     \|\boldsymbol{f}\|^2_{2} = h^3\sum_{\mathbf{I} \in \mathcal{T}_h} |\boldsymbol{f_I}|^2,~~\|\nabla_h\boldsymbol{f}\|^2_{2} = h^3\sum_{\mathbf{I} \in \mathcal{T}_h} \nabla_h \boldsymbol{f_I} : \nabla_h\boldsymbol{f_I} . 
     \setlength{\belowdisplayskip}{4pt}  
\end{equation*}
Subsequently, the discrete $\ell^p$ norm ($1 \leq p <\infty$) becomes 
 \begin{equation*} 
   \setlength{\abovedisplayskip}{4pt}  
   \|\boldsymbol{f}\|^p_{p} =  h^3\sum_{\mathbf{I} \in \mathcal{T}_h} |\boldsymbol{f_I}|^p,~~\|\nabla_h\boldsymbol{f}\|^p_{p} = h^3\sum_{\mathbf{I} \in \mathcal{T}_h} |\nabla_h \boldsymbol{f_I} : \nabla_h\boldsymbol{f_I}|^{p/2}. 
   \setlength{\belowdisplayskip}{4pt}  
\end{equation*} 
Furthermore, the discrete $H^1_h$ norm of the grid function $\boldsymbol{f}$ is introduced as 
\begin{equation*}
   \setlength{\abovedisplayskip}{4pt}  
    \|\boldsymbol{f}\|^2_{H^1_h} = \|\boldsymbol{f}\|^2_{2} + \|\nabla_h\boldsymbol{f}\|^2_{2} . 
   \setlength{\belowdisplayskip}{4pt}  
\end{equation*}
Meanwhile, the discrete $\ell^\infty$ norm can be defined as $\|\boldsymbol{f}\|_{\infty} = \max\limits_{\mathbf{I} \in \mathcal{T}_h}|\boldsymbol{f}|$.

The following lemmas will be frequently used in the subsequent analysis. Their proofs and more detailed discussions could be found in \cite{bao2013optimal, samarskii1976difference, yu1995general}, etc. 
 \medskip
 \begin{lemma}(Summation by parts)
     For any grid functions $\boldsymbol{f},\,\boldsymbol{g} \in \mathcal{M}_h^3$ with the discrete boundary condition \eqref{boundarycondition}, the following identity holds:
     \begin{equation}
       \setlength{\abovedisplayskip}{4pt}  
         -\langle \Delta_h \boldsymbol{f},\boldsymbol{g} \rangle 
         = \langle \nabla_h\boldsymbol{f},\nabla_h\boldsymbol{g} \rangle .  \label{summation-1} 
      \setlength{\belowdisplayskip}{4pt}  
     \end{equation}
 \end{lemma}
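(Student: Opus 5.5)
Since both operators act componentwise on $\boldsymbol{f}=(f^x,f^y,f^z)$, and both sides of \eqref{summation-1} are sums of the corresponding scalar pairings, it suffices to prove the scalar identity $-\langle \Delta_h f, g\rangle = \langle \nabla_h f,\nabla_h g\rangle$ for $f,g\in\mathcal{M}_h$. The discrete Laplacian is the sum of three one-dimensional second differences, and the gradient pairing is the sum of three directional pairings. So I would reduce further to a single direction, say $x$, and show, for each fixed $(j,k)$,
\begin{equation*}
-\sum_{i=0}^{N_x-1}\frac{f_{i+1,j,k}-2f_{i,j,k}+f_{i-1,j,k}}{h^2}\,g_{i,j,k}
=\sum_{i=0}^{N_x-1}\frac{f_{i+1,j,k}-f_{i,j,k}}{h}\cdot\frac{g_{i+1,j,k}-g_{i,j,k}}{h}.
\end{equation*}
Here the second difference is understood with the $h^2$ denominator; the display in the paper omits the square, which is evidently a typo. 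Multiplying by $h^3$ and summing over $j,k$ and over the three directions then gives the claim.

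For the one-dimensional identity, set $D_{i+1/2}=(f_{i+1}-f_i)/h$. The second difference at $i$ is $(D_{i+1/2}-D_{i-1/2})/h$, so the left side equals
\begin{equation*}
-\frac1h\sum_{i=0}^{N_x-1} D_{i+1/2}\,g_i+\frac1h\sum_{i=0}^{N_x-1} D_{i-1/2}\,g_i .
\end{equation*}
In the second sum I would shift the index $i\to i+1$, which gives $\frac1h\sum_{i=-1}^{N_x-2}D_{i+1/2}g_{i+1}$. The periodic conditions \eqref{boundarycondition} give $f_{-1}=f_{N_x-1}$, $f_{N_x}=f_0$ and the same for $g$. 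Hence the $i=-1$ term coincides with the $i=N_x-1$ term, and the range can be replaced by $0,\dots,N_x-1$. Combining the two sums yields $\sum_i D_{i+1/2}(g_{i+1}-g_i)/h$, which is the right side.

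The only point needing care is this index shift: we must check that the periodic extension makes the boundary contributions cancel exactly. This follows directly from the identification $f_{N_x}=f_0$ and $f_{-1}=f_{N_x-1}$, so the gradient values at $i+1/2$ are themselves $N_x$-periodic. For homogeneous Neumann conditions the same computation would instead use ghost-point reflection, but that case is not needed here.
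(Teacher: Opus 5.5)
Your proof is correct and is the standard discrete Abel-summation argument. The paper does not write this argument out; it defers to the cited references. Your steps are the componentwise and directional reduction, the rewriting of the second difference as $(D_{i+1/2}-D_{i-1/2})/h$, and the periodic index shift that identifies the $i=-1$ term with the $i=N_x-1$ term via $f_{-1}=f_{N_x-1}$, $f_0=f_{N_x}$, $g_0=g_{N_x}$. You are also right that the denominators in the paper's definition of $\Delta_h$ must be $h_x^2,h_y^2,h_z^2$ for the identity to hold.
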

 \begin{lemma}(Inverse and interpolation inequalities)\label{leminverse}
     For any grid function $\boldsymbol{f} \in \mathcal{M}_h^3$, we have 
     \begin{align}
       \setlength{\abovedisplayskip}{4pt}  
     \displaystyle
         \|\boldsymbol{f}\|_{q} &\leq \breve{C}_0 h^{-3 (\frac{1}{p} - \frac{1}{q})}\|\boldsymbol{f}\|_{p},~~1\leq p < q  \leq \infty,  \label{inverse-1} \\
         \displaystyle
         \|\boldsymbol{f}\|_{q} &\leq \breve{C}_0 \|\boldsymbol{f}\|_{2}^{\frac{6-q}{2q}} \cdot \|\boldsymbol{f}\|^{\frac{3q-6}{2q}}_{H^1_h},~~\forall\,2 \leq q \leq 6 ,  \label{interpolation-1} 
         \setlength{\belowdisplayskip}{4pt}  
     \end{align}
     in which $\breve{C}_0$ only depends on $\Omega$. 
 \end{lemma}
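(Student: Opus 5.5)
The plan is to reduce both estimates in Lemma~\ref{leminverse} to scalar grid functions, one component at a time, and then to pass from the periodic grid to the continuous torus through a discrete Fourier (trigonometric) extension, where the standard continuous inequalities are available.

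\textbf{The inverse inequality \eqref{inverse-1}.} This is purely a finite-dimensional estimate and needs no extension.

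First, for a single grid point $\mathbf{I}$,
\[
h^3|\boldsymbol{f}_{\mathbf{I}}|^p\le \|\boldsymbol{f}\|_p^p .
\]
Taking the maximum over $\mathbf{I}$ gives $\|\boldsymbol{f}\|_\infty\le h^{-3/p}\|\boldsymbol{f}\|_p$.

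For finite $q$ with $p<q$, we use $|\boldsymbol{f}_{\mathbf{I}}|^q\le \|\boldsymbol{f}\|_\infty^{q-p}|\boldsymbol{f}_{\mathbf{I}}|^p$. Summing over $\mathbf{I}$ yields
\[
\|\boldsymbol{f}\|_q^q\le \|\boldsymbol{f}\|_\infty^{q-p}\|\boldsymbol{f}\|_p^p\le h^{-3(q-p)/p}\|\boldsymbol{f}\|_p^q .
\]
Taking $q$-th roots gives exactly the exponent $-3(1/p-1/q)$.

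Constants from componentwise norms versus the Euclidean modulus $|\cdot|$ are harmless, since the vectors are three-dimensional.

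\textbf{The interpolation inequality \eqref{interpolation-1}.} Here we first prove the endpoint $q=6$, namely the discrete Sobolev embedding $\|\boldsymbol{f}\|_6\le C\|\boldsymbol{f}\|_{H^1_h}$. The general case then follows from H\"older's inequality on the grid:
\[
\|\boldsymbol{f}\|_q\le \|\boldsymbol{f}\|_2^{\theta}\|\boldsymbol{f}\|_6^{1-\theta},\qquad \frac1q=\frac{\theta}{2}+\frac{1-\theta}{6}.
\]
Solving gives $\theta=(6-q)/(2q)$ and $1-\theta=(3q-6)/(2q)$, which matches the stated exponents.

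For the discrete Sobolev embedding, write a periodic grid function as a discrete Fourier series
\[
f_{\mathbf{I}}=\sum_{\mathbf{k}}\hat f_{\mathbf{k}}e^{2\pi i\mathbf{k}\cdot x_{\mathbf{I}}},
\]
and let $f_N(x)$ be its trigonometric interpolant. We then rely on three facts.
\begin{enumerate}
\item By Parseval, $\|f\|_2=\|f_N\|_{L^2}$.
\item Since $|\sin(\pi k h)/h|\ge 2|k|$ for $|k|\le N/2$, we get $\|\nabla f_N\|_{L^2}\le C\|\nabla_h f\|_2$.
\item There is a norm equivalence $\|f\|_6\le C\|f_N\|_{L^6}$ between grid values and trigonometric polynomials of bounded degree. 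This is a Marcinkiewicz–Zygmund type inequality (the discrete $\ell^p$ norm of a trigonometric polynomial of degree at most $N/2$ is controlled by its $L^p$ norm), valid for $1<p<\infty$.
\end{enumerate}
Combining these with the continuous embedding $H^1(\mathbb{T}^3)\hookrightarrow L^6$ gives the endpoint bound.

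\textbf{Main obstacle.} The third fact is the step I expect to be hardest. I would either cite it from the references \cite{bao2013optimal, samarskii1976difference, yu1995general}, or avoid it altogether by a more elementary route: replace the trigonometric interpolant by the continuous piecewise trilinear interpolant $\mathcal{I}f$. For this interpolant the equivalences $\|f\|_p\simeq\|\mathcal{I}f\|_{L^p}$ and $\|\nabla\mathcal{I}f\|_{L^2}\le C\|\nabla_hf\|_2$ follow from a local computation on each cell together with finite-dimensional norm equivalence on the reference cube, with constants independent of $h$. The embedding $H^1\hookrightarrow L^6$ is then applied to $\mathcal{I}f$.
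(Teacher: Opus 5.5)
Your proposal is correct and follows the paper's route. You prove the inverse inequality by the elementary pointwise bound $h^3|\boldsymbol{f}_{\mathbf{I}}|^p\le\|\boldsymbol{f}\|_p^p$, and you obtain \eqref{interpolation-1} from discrete H\"older interpolation between $\ell^2$ and $\ell^6$ together with the discrete Sobolev embedding $\|\boldsymbol{f}\|_6\le C\|\boldsymbol{f}\|_{H^1_h}$. The only difference is that the paper cites Lemma 6 of \cite{chen20b} for that embedding, whereas you sketch its proof; both of your routes are sound (the trigonometric interpolant, using the Marcinkiewicz--Zygmund direction you actually need, or the trilinear interpolant).
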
 
 
 \begin{proof} 
 The proof of the inverse inequality~\eqref{inverse-1} is straightforward. The proof for the interpolation inequality~\eqref{interpolation-1} comes from a combination of the discrete H\"older interpolation inequality 
 \begin{equation} 
    \setlength{\abovedisplayskip}{4pt}  
   \|\boldsymbol{f}\|_{q} \leq \|\boldsymbol{f}\|_{2}^{\frac{6-q}{2q}} 
   \cdot \|\boldsymbol{f}\|^{\frac{3q-6}{2q}}_6 ,~~\forall\,2 \leq q \leq 6 ,  \label{interpolation-2} 
         \setlength{\belowdisplayskip}{4pt} 
\end{equation} 
and the discrete Sobolev embedding 
\begin{equation} 
  \setlength{\abovedisplayskip}{4pt}  
   \|\boldsymbol{f}\|_6 \le C \|\boldsymbol{f}\|_{H_h^1}  ,   \label{interpolation-3} 
         \setlength{\belowdisplayskip}{4pt} 
\end{equation}
see Lemma 6 in an existing work~\cite{chen20b}.
 \end{proof} 
 
 For any grid functions $f,\,g$ and vector grid functions $\boldsymbol{f},\,\boldsymbol{g}$, the definition of $\nabla_h$ implies the following equalities:
 \begin{equation} 
 \begin{aligned}
    \setlength{\abovedisplayskip}{4pt}  
     \nabla_h (fg) &= \nabla_hf\,\Pi_hg + \nabla_hg\,\Pi_hf,\ \ 
     \nabla_h(\boldsymbol{f} \cdot \boldsymbol{g}) = \nabla_h\boldsymbol{f} \cdot \Pi_h\boldsymbol{g} + \nabla_h \boldsymbol{g} \cdot \Pi_h\boldsymbol{f},\\
     \nabla_h(f\boldsymbol{g}) &= \nabla_hf\,\Pi_h\boldsymbol{g} + \nabla_h\boldsymbol{g}\Pi_hf,\ \ 
     \nabla_h(\boldsymbol{f}\times\boldsymbol{g}) = \nabla_h\boldsymbol{f}\times\Pi_h\boldsymbol{g} + \Pi_h\boldsymbol{f}\times \nabla_h\boldsymbol{g},
     \setlength{\belowdisplayskip}{4pt}  
 \end{aligned}
   \label{product gradient-1} 
\end{equation} 
where \(\Pi_h\) is the matched central interpolation operator and the operations follow the tensor form.
 
The following discrete version of the Gronwall lemma (see \cite{HeSu07,shen1990long}) will be frequently used. 
\medskip
\begin{lemma} \label{lem: gronwall2}
Let $a_k$, $b_k$, $c_k$, $d_k$, $\gamma_k$, $\Delta t_k$ be non negative real numbers such that
\begin{equation*}\label{e_Gronwall3}
  \setlength{\abovedisplayskip}{4pt}  
a_{k+1}-a_k+b_{k+1}\Delta t_{k+1}+c_{k+1}\Delta t_{k+1}-c_k\Delta t_k\leq a_kd_k\Delta t_k+\gamma_{k+1}\Delta t_{k+1} , \quad \forall \, \, 0\leq k\leq m . 
  \setlength{\belowdisplayskip}{4pt}  
\end{equation*}
Then we have 
 \begin{equation*}\label{e_Gronwall4}
  \setlength{\abovedisplayskip}{4pt}  
a_{m+1}+\sum_{k=0}^{m+1}b_k\Delta t_k \leq \exp \Big(\sum_{k=0}^md_k\Delta t_k \Big) \{a_0+(b_0+c_0)\Delta t_0+\sum_{k=1}^{m+1}\gamma_k\Delta t_k \}.
  \setlength{\belowdisplayskip}{4pt}  
\end{equation*}
\end{lemma}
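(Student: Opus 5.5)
We prove the discrete Gronwall inequality of Lemma~\ref{lem: gronwall2} by an induction argument built on telescoping. First we would sum the assumed inequality over $k=0,\dots,j$ for $j\le m$. The differences $a_{k+1}-a_k$ and $c_{k+1}\Delta t_{k+1}-c_k\Delta t_k$ telescope, which leaves
\begin{equation*}
a_{j+1}+\sum_{k=1}^{j+1}b_k\Delta t_k + c_{j+1}\Delta t_{j+1} \le a_0 + c_0\Delta t_0 + \sum_{k=0}^{j} a_k d_k\Delta t_k + \sum_{k=1}^{j+1}\gamma_k\Delta t_k .
\end{equation*}
Next we drop the nonnegative term $c_{j+1}\Delta t_{j+1}$ and add $b_0\Delta t_0$ to both sides. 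Setting $B:=a_0+(b_0+c_0)\Delta t_0+\sum_{k=1}^{m+1}\gamma_k\Delta t_k$ and using $\gamma_k\ge0$ to extend the $\gamma$-sum up to $m+1$, we obtain for every $0\le j\le m$
\begin{equation*}
S_{j+1}:=a_{j+1}+\sum_{k=0}^{j+1}b_k\Delta t_k \le B+\sum_{k=0}^{j} d_k\Delta t_k\, a_k \le B+\sum_{k=0}^{j} d_k\Delta t_k\, S_k ,
\end{equation*}
where the last step uses $a_k\le S_k$ because the $b$'s are nonnegative. The same bound trivially gives $S_0=a_0+b_0\Delta t_0\le B$.

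The core step is to show by induction that $S_{j}\le B\prod_{k=0}^{j-1}(1+d_k\Delta t_k)$, with the empty product equal to $1$ when $j=0$. The base case $j=0$ is immediate. For the inductive step, assume the bound holds for $S_0,\dots,S_j$. Substituting into the inequality above gives
\begin{equation*}
S_{j+1}\le B\Big(1+\sum_{k=0}^{j} d_k\Delta t_k\prod_{l<k}(1+d_l\Delta t_l)\Big).
\end{equation*}
The bracket equals $\prod_{k=0}^{j}(1+d_k\Delta t_k)$ exactly. This is the telescoping identity $P_{k+1}-P_k=d_k\Delta t_k P_k$, where $P_k=\prod_{l<k}(1+d_l\Delta t_l)$. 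Finally, since $1+x\le e^x$ for $x\ge 0$, the product is at most $\exp\big(\sum_{k=0}^{m}d_k\Delta t_k\big)$. Taking $j=m$ yields the claim.

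The only delicate point is the bookkeeping of indices. The right-hand side involves $a_k d_k\Delta t_k$ at the old level $k$, not $k+1$, which is what makes this an explicit Gronwall inequality with no smallness condition on $d_k\Delta t_k$. We must also keep the $b_0\Delta t_0$ and $c_0\Delta t_0$ contributions consistent with the stated constant. We expect no genuine obstacle beyond this.
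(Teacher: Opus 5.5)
The paper does not prove this lemma; it only quotes it from \cite{HeSu07,shen1990long}, so there is no internal proof to compare against.

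Your argument is correct and complete. Each step checks out:
\begin{itemize}
\item the telescoped inequality;
\item the reduction to $S_{j+1}\le B+\sum_{k=0}^{j}d_k\Delta t_k S_k$, which uses $c_{j+1}\Delta t_{j+1}\ge 0$, $\gamma_k\ge 0$ and $a_k\le S_k$;
\item the base case $S_0\le B$;
\item the strong induction, via the identity $P_{k+1}-P_k=d_k\Delta t_kP_k$;
\item the final bound $1+x\le e^x$.
\end{itemize}
You also correctly note that the explicit index in $a_kd_k\Delta t_k$ is why no smallness condition such as $d_k\Delta t_k<1$ is needed.

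A marginally shorter route, closer to how this lemma is usually proved, keeps the $c$-term inside the accumulated quantity. Set $A_k:=a_k+\sum_{l=0}^{k}b_l\Delta t_l+c_k\Delta t_k$. The hypothesis then reads
\[
A_{k+1}-A_k\le a_kd_k\Delta t_k+\gamma_{k+1}\Delta t_{k+1}\le d_k\Delta t_k A_k+\gamma_{k+1}\Delta t_{k+1}.
\]
This one-step recursion $A_{k+1}\le(1+d_k\Delta t_k)A_k+\gamma_{k+1}\Delta t_{k+1}$ can be iterated directly, with no strong induction. Since $A_0=a_0+(b_0+c_0)\Delta t_0$ and $A_{m+1}\ge a_{m+1}+\sum_{k=0}^{m+1}b_k\Delta t_k$, it yields exactly the same constant. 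Your summed-form induction reaches the same bound and is equally rigorous, just with slightly more bookkeeping.
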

\section{Semi-implicit projection scheme}
In this section, we present the fully discrete finite difference scheme based on the semi-implicit projection method, and derive an unconditional and original energy dissipation. To facilitate the theoretical analysis, before presenting the fully discrete version, we first derive an equivalent form of the semi-discrete scheme \eqref{semidiscrete}.

\textbf{Scheme \uppercase\expandafter{\romannumeral1} (Semi-discrete scheme in time)}. The semi-implicit projection  (SIP) scheme \eqref{semidiscrete} could be rewritten as follows.  
\begin{equation}\label{semi-discretescheme}
  \setlength{\abovedisplayskip}{4pt} 
    \displaystyle
    \left\{
    \begin{array}{l}
    \displaystyle
         \frac{\widetilde{\mathbf{m}}^n - \mathbf{m}^{n-1}}{\Delta t} = -\beta \mathbf{m}^{n-1}\times\Delta\widetilde{\mathbf{m}}^{n}-\gamma\mathbf{m}^{n-1}\times(\mathbf{m}^{n-1}\times\Delta\widetilde{\mathbf{m}}^n),  \\
         \displaystyle
          \mathbf{m}^n = \frac{\widetilde{\mathbf{m}}^{n}}{|\widetilde{\mathbf{m}}^{n}|}.
    \end{array} \right. 
    \setlength{\belowdisplayskip}{4pt} 
\end{equation}
An unconditional energy dissipation estimate is stated below; the proof could be found in  \cite{du2025semi} . 
\medskip
\begin{theorem}  \cite{du2025semi}  \label{thm1}
The semi-discrete scheme \eqref{semi-discretescheme} is unconditionally dissipative with respect to the original energy:
    \begin{equation} \label{eq_final_Stability}
     \setlength{\abovedisplayskip}{4pt} 
        \frac{1}{2}\|\nabla\mathbf{m}^n\|^2 - \frac{1}{2}\|\nabla\mathbf{m}^{n-1}\|^2 \leq -\Delta t\gamma\|\mathbf{m}^{n-1} \times \Delta\widetilde{\mathbf{m}}^n\|^2. 
       \setlength{\belowdisplayskip}{4pt} 
    \end{equation}
\end{theorem}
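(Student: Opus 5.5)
The plan has two stages: an energy identity for the linear intermediate step, and then an estimate showing that the pointwise renormalization cannot increase the Dirichlet energy.

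\textbf{Step 1 (intermediate step).} We take the $L^2$ inner product of the first equation of \eqref{semi-discretescheme} with $-\Delta\widetilde{\mathbf{m}}^n$.

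For the $\beta$-term, $(\mathbf{m}^{n-1}\times\Delta\widetilde{\mathbf{m}}^n,\Delta\widetilde{\mathbf{m}}^n)=0$ pointwise, so it drops out.

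For the $\gamma$-term, the identity $(\mathbf{a}\times(\mathbf{a}\times\mathbf{b}))\cdot\mathbf{b}=-|\mathbf{a}\times\mathbf{b}|^2$ gives
$$-\gamma\big(\mathbf{m}^{n-1}\times(\mathbf{m}^{n-1}\times\Delta\widetilde{\mathbf{m}}^n),-\Delta\widetilde{\mathbf{m}}^n\big)=-\gamma\|\mathbf{m}^{n-1}\times\Delta\widetilde{\mathbf{m}}^n\|^2 .$$

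On the left-hand side, integration by parts (periodic boundary conditions) together with the polarization identity $2(\mathbf{a}-\mathbf{b},\mathbf{a})=|\mathbf{a}|^2-|\mathbf{b}|^2+|\mathbf{a}-\mathbf{b}|^2$ yields
$$\tfrac12\|\nabla\widetilde{\mathbf{m}}^n\|^2-\tfrac12\|\nabla\mathbf{m}^{n-1}\|^2+\tfrac12\|\nabla(\widetilde{\mathbf{m}}^n-\mathbf{m}^{n-1})\|^2=-\Delta t\,\gamma\|\mathbf{m}^{n-1}\times\Delta\widetilde{\mathbf{m}}^n\|^2 .$$
Dropping the nonnegative third term, the claim reduces to showing $\|\nabla\mathbf{m}^n\|\le\|\nabla\widetilde{\mathbf{m}}^n\|$.

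\textbf{Step 2 (projection step).} We first check that $|\widetilde{\mathbf{m}}^n|\ge1$ pointwise. Dot the scheme with $\mathbf{m}^{n-1}$: the right-hand side is a combination of cross products with $\mathbf{m}^{n-1}$, hence orthogonal to it. Therefore $\widetilde{\mathbf{m}}^n\cdot\mathbf{m}^{n-1}=|\mathbf{m}^{n-1}|^2=1$, and
$$|\widetilde{\mathbf{m}}^n|^2=1+|\widetilde{\mathbf{m}}^n-\mathbf{m}^{n-1}|^2\ge1$$
by Pythagoras. In particular the normalization is well defined.

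Next we write $\widetilde{\mathbf{m}}^n=\rho\,\mathbf{m}^n$ with $\rho=|\widetilde{\mathbf{m}}^n|\ge1$ and $|\mathbf{m}^n|=1$. Since $|\mathbf{m}^n|=1$, we have $\partial_i\mathbf{m}^n\cdot\mathbf{m}^n=0$, and hence
$$|\nabla\widetilde{\mathbf{m}}^n|^2=|\nabla\rho|^2+\rho^2|\nabla\mathbf{m}^n|^2\ge|\nabla\mathbf{m}^n|^2$$
pointwise. Integrating gives $\|\nabla\mathbf{m}^n\|\le\|\nabla\widetilde{\mathbf{m}}^n\|$, and combining this with Step 1 yields \eqref{eq_final_Stability}.

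The main point requiring care is Step 2: the projection onto the sphere must not increase the energy, and this hinges on the key fact $|\widetilde{\mathbf{m}}^n|\ge1$. That fact comes from the orthogonality of the update to $\mathbf{m}^{n-1}$, which is built into the semi-implicit choice $P(\mathbf{m}^{n-1})$. Regularity is assumed sufficient for the integrations by parts and the chain rule in $\rho\,\mathbf{m}^n$ to hold ($\rho\ge1$ keeps the map $\widetilde{\mathbf{m}}\mapsto\widetilde{\mathbf{m}}/|\widetilde{\mathbf{m}}|$ smooth).
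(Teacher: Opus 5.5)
Your proof is correct and follows the same route the paper uses for the fully discrete analogue (Theorem~\ref{discretethm1} with Lemma~\ref{lem2}; the semi-discrete statement itself is only cited from \cite{du2025semi}). That route is: test with $-\Delta\widetilde{\mathbf{m}}^n$, use the orthogonality of the update to $\mathbf{m}^{n-1}$ to get $|\widetilde{\mathbf{m}}^n|\ge 1$, then compare gradients pointwise, where your identity $|\nabla\widetilde{\mathbf{m}}^n|^2=|\nabla\rho|^2+\rho^2|\nabla\mathbf{m}^n|^2$ is simply the continuous counterpart of the paper's edge-wise law-of-cosines bound $h^2(|\nabla_h^x\widetilde{\mathbf{m}}^n|^2-|\nabla_h^x\mathbf{m}^n|^2)\ge(\alpha_1-\alpha_2)^2$.
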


Now, we present the finite difference fully discrete scheme that preserves both a point-wise length, $|\textbf{m}^n| = 1$, and the original energy dissipation.

\textbf{Scheme \uppercase\expandafter{\romannumeral2} (Fully discrete scheme)}. The fully discrete finite difference scheme, based on the semi-discrete approach \eqref{semi-discretescheme}, could be written as follows:
\begin{equation}\label{discretescheme}
    \setlength{\abovedisplayskip}{4pt} 
    \left\{
    \begin{array}{l}
    \displaystyle
         \frac{\widetilde{\mathbf{m}}^n - \mathbf{m}^{n-1}}{\Delta t} = -\beta \mathbf{m}^{n-1}\times\Delta_h\widetilde{\mathbf{m}}^{n}-\gamma\mathbf{m}^{n-1}\times(\mathbf{m}^{n-1}\times\Delta_h\widetilde{\mathbf{m}}^n),  \\
         \displaystyle
          \mathbf{m}^n = \frac{\widetilde{\mathbf{m}}^{n}}{|\widetilde{\mathbf{m}}^{n}|}.
    \end{array} \right. 
    \setlength{\belowdisplayskip}{4pt} 
\end{equation}

The unique solvability of this linear numerical scheme is stated in the following theorem. 
\medskip
\begin{theorem} \label{thm: solvability} 
Given $\mathbf{m}^{n-1}$, there is a unique solution $(\widetilde{\mathbf{m}}^n , \mathbf{m}^n)$ to the numerical system~\eqref{discretescheme}. 
\end{theorem}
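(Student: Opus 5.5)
The first stage of \eqref{discretescheme} is a square linear system for $\widetilde{\mathbf{m}}^n \in \mathcal{M}_h^3$, with $\mathbf{m}^{n-1}$ given. In finite dimensions it is therefore enough to show that the homogeneous system has only the trivial solution. The plan is to prove this by an energy argument, and then to check separately that the renormalization stage is well defined.

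\textbf{Step 1: reduction to the kernel.} Write the first equation as $\mathcal{L}\widetilde{\mathbf{m}}^n = \mathbf{m}^{n-1}$, where
\[
\mathcal{L}\mathbf{v} = \mathbf{v} + \Delta t\,\beta\,\mathbf{m}^{n-1}\times\Delta_h\mathbf{v} + \Delta t\,\gamma\,\mathbf{m}^{n-1}\times(\mathbf{m}^{n-1}\times\Delta_h\mathbf{v}).
\]
Suppose $\mathcal{L}\mathbf{v}=0$. We take the discrete inner product of this equation with $-\Delta_h\mathbf{v}$. Pointwise, $(\mathbf{m}^{n-1}\times\Delta_h\mathbf{v})\cdot\Delta_h\mathbf{v}=0$. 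Also $(\mathbf{a}\times(\mathbf{a}\times\mathbf{b}))\cdot\mathbf{b} = -|\mathbf{a}\times\mathbf{b}|^2$. Combining these with summation by parts \eqref{summation-1}, we obtain
\[
\|\nabla_h\mathbf{v}\|_2^2 + \Delta t\,\gamma\,\|\mathbf{m}^{n-1}\times\Delta_h\mathbf{v}\|_2^2 = 0 .
\]
Since $\gamma>0$, this forces $\nabla_h\mathbf{v}=0$. Under the periodic boundary condition \eqref{boundarycondition}, $\mathbf{v}$ is then a constant vector, so $\Delta_h\mathbf{v}=0$. Substituting back into $\mathcal{L}\mathbf{v}=0$ gives $\mathbf{v}=0$. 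Hence $\mathcal{L}$ is injective, therefore invertible, and $\widetilde{\mathbf{m}}^n$ exists and is unique. This step holds with no restriction on $\Delta t$ or $h$.

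\textbf{Step 2: the projection is well defined.} We also need $|\widetilde{\mathbf{m}}^n_{\mathbf{I}}|\neq 0$ at every grid point. Take the pointwise dot product of the first equation with $\mathbf{m}^{n-1}_{\mathbf{I}}$. Both cross-product terms are orthogonal to $\mathbf{m}^{n-1}_{\mathbf{I}}$, so we get $\widetilde{\mathbf{m}}^n_{\mathbf{I}}\cdot\mathbf{m}^{n-1}_{\mathbf{I}} = |\mathbf{m}^{n-1}_{\mathbf{I}}|^2$.

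By induction from $|\mathbf{m}^0_{\mathbf{I}}|=1$, we may assume $|\mathbf{m}^{n-1}_{\mathbf{I}}|=1$. Cauchy--Schwarz then gives $|\widetilde{\mathbf{m}}^n_{\mathbf{I}}|\ge 1$. Consequently $\mathbf{m}^n=\widetilde{\mathbf{m}}^n/|\widetilde{\mathbf{m}}^n|$ is uniquely determined and again satisfies $|\mathbf{m}^n_{\mathbf{I}}|=1$, which closes the induction.

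The only delicate point is in Step 1: we must test against $-\Delta_h\mathbf{v}$ rather than $\mathbf{v}$, and then use the periodic boundary condition to pass from $\nabla_h\mathbf{v}=0$ to $\Delta_h\mathbf{v}=0$. Beyond that, the argument is routine.
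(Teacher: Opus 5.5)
Your proposal is correct and follows essentially the same route as the paper: test the homogeneous linear system against $-\Delta_h\mathbf{v}$, use the pointwise orthogonality of the $\beta$-term and the sign of the $\gamma$-term to get $\nabla_h\mathbf{v}=0$, and conclude that $\mathbf{v}$ is constant and hence zero. Your Step 2 spells out what the paper only calls ``more straightforward'': the pointwise identity $\widetilde{\mathbf{m}}^n\cdot\mathbf{m}^{n-1}=|\mathbf{m}^{n-1}|^2$, which the paper itself uses later in Lemma~\ref{lem4}, gives $|\widetilde{\mathbf{m}}^n|\ge 1$ under the inductive hypothesis $|\mathbf{m}^{n-1}|=1$, so the normalization is well defined; this is a welcome clarification.
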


\begin{proof} 
Given $\mathbf{m}^{n-1}$, the intermediate numerical stage in \eqref{discretescheme} corresponds to a non-constant coefficient (time dependent) linear system for $\widetilde{\mathbf{m}}^{n}$. In turn, the unique solvability is equivalent to the verification that the homogeneous numerical system (by throwing all the terms independent of the unknown numerical variable) would only have a trivially zero solution. In more details, the homogeneous version of the intermediate numerical stage turns out to be 
\begin{equation} 
   \setlength{\abovedisplayskip}{4pt} 
   (\Delta t)^{-1} \widetilde{\mathbf{m}}^n  = -\beta \mathbf{m}^{n-1}\times\Delta_h 
   \widetilde{\mathbf{m}}^{n}-\gamma\mathbf{m}^{n-1}\times(\mathbf{m}^{n-1}\times\Delta_h\widetilde{\mathbf{m}}^n) .  \label{solvability-1} 
    \setlength{\belowdisplayskip}{4pt} 
\end{equation}
Subsequently, a discrete inner product with $-\Delta_h \widetilde{\mathbf{m}}^n$ yields 
\begin{equation} 
   \setlength{\abovedisplayskip}{4pt} 
   (\Delta t)^{-1} \| \nabla_h \widetilde{\mathbf{m}}^n \|_2^2 = 
   \gamma \langle \mathbf{m}^{n-1} \times(\mathbf{m}^{n-1} \times \Delta_h\widetilde{\mathbf{m}}^n) , 
    \Delta_h \widetilde{\mathbf{m}}^n \rangle \le 0 , \label{solvability-2} 
    \setlength{\belowdisplayskip}{4pt} 
\end{equation}
in which the summation by parts formula~\eqref{summation-1} and the $\ell^2$ orthogonality between $\Delta_h \widetilde{\mathbf{m}}^n$ and the first term on the right hand side of~\eqref{solvability-1} have been applied. As a result, the only possible solution to~\eqref{solvability-2} would be $\widetilde{\mathbf{m}}^n \equiv K_0$, a constant. Furthermore, a substitution of this constant value into~\eqref{solvability-1} implies that $\widetilde{\mathbf{m}}^n \equiv 0$. Therefore, the unique solvability for the numerical solution $\widetilde{\mathbf{m}}^n$ has been proved. As a consequence, given $\widetilde{\mathbf{m}}^n$, the unique solvability of the projection solution $\mathbf{m}^n$ is more straightforward. This finishes the proof of Theorem~\ref{thm: solvability}. 
\end{proof}

The following lemma is needed to establish an energy dissipation of the numerical scheme \eqref{discretescheme}. 
\medskip
\begin{lemma}\label{lem2}
    Let $\{\mathbf{m}^n,\widetilde{\mathbf{m}}^{n}\}$ be the solution of \eqref{discretescheme} with $ |\widetilde{\mathbf{m}}^{n}|\geq1 $ and $ | \mathbf{m}^{n}|= 1 $. For any $1\leq n \leq N$, there holds
    \begin{equation}\label{lem2eq1}
    \setlength{\abovedisplayskip}{4pt} 
        | \nabla_h\mathbf{m}^n | \leq | \nabla_h\widetilde{\mathbf{m}}^n | ,  \quad 
        \mbox{at a point-wise level}\textcolor{black}{,}
    \setlength{\belowdisplayskip}{4pt} 
    \end{equation}
    which in turn implies that $\|\nabla_h\mathbf{m}^n\|^2_2 \leq \|\nabla_h\widetilde{\mathbf{m}}^n\|^2_{2}$.
\end{lemma}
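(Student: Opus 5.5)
The inequality is really a statement about each difference quotient separately, so I will prove it direction by direction. Fix a direction, say $x$, and two neighbouring grid points $\mathbf{I}$ and $\mathbf{I}+e_x$. Write $\mathbf{a}=\widetilde{\mathbf{m}}^n_{\mathbf{I}}$ and $\mathbf{b}=\widetilde{\mathbf{m}}^n_{\mathbf{I}+e_x}$. By hypothesis $|\mathbf{a}|,|\mathbf{b}|\ge 1$, and $\mathbf{m}^n$ takes the values $\mathbf{a}/|\mathbf{a}|$ and $\mathbf{b}/|\mathbf{b}|$ at these two points. Since $h_x\nabla_h^x\mathbf{m}^n_{\mathbf{I}+1/2e_x}=\mathbf{a}/|\mathbf{a}|-\mathbf{b}/|\mathbf{b}|$ and $h_x\nabla_h^x\widetilde{\mathbf{m}}^n_{\mathbf{I}+1/2e_x}=\mathbf{a}-\mathbf{b}$, the whole lemma reduces to the elementary inequality
\begin{equation*}
\Big|\frac{\mathbf{a}}{|\mathbf{a}|}-\frac{\mathbf{b}}{|\mathbf{b}|}\Big|\le |\mathbf{a}-\mathbf{b}|,\qquad |\mathbf{a}|,|\mathbf{b}|\ge 1 .
\end{equation*}
In words: the radial projection onto the unit sphere is $1$-Lipschitz outside the unit ball.

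I will prove this inequality with a Law-of-Cosines computation. Set $r=|\mathbf{a}|$, $s=|\mathbf{b}|$, and $c=\mathbf{a}\cdot\mathbf{b}/(rs)\in[-1,1]$. Then
\begin{equation*}
|\mathbf{a}-\mathbf{b}|^2=r^2+s^2-2rsc,\qquad \Big|\frac{\mathbf{a}}{r}-\frac{\mathbf{b}}{s}\Big|^2=2-2c .
\end{equation*}
Their difference is
\begin{equation*}
r^2+s^2-2-2c(rs-1)=(r-s)^2+2(rs-1)(1-c).
\end{equation*}
This is nonnegative because $rs\ge 1$ and $c\le 1$. This algebraic identity is the only real content of the proof, and I do not expect any serious obstacle. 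The one point needing care is that the hypothesis $|\widetilde{\mathbf{m}}^n|\ge 1$ is used at both endpoints of every edge. (That hypothesis presumably follows from the scheme: taking the pointwise inner product of the first equation with $\mathbf{m}^{n-1}$ gives $\widetilde{\mathbf{m}}^n\cdot\mathbf{m}^{n-1}=1$, hence $|\widetilde{\mathbf{m}}^n|\ge1$. In the lemma, however, it is simply assumed.)

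Applying the inequality to every edge in the $x$, $y$ and $z$ directions and every component gives $|\nabla_h^\alpha\mathbf{m}^n|\le|\nabla_h^\alpha\widetilde{\mathbf{m}}^n|$ at each staggered point. Summing the squares over the three directions gives the pointwise bound (\ref{lem2eq1}). Multiplying by $h^3$ and summing over $\mathcal{T}_h$, where the periodic boundary condition (\ref{boundarycondition}) ensures that all edges are accounted for consistently, then gives $\|\nabla_h\mathbf{m}^n\|_2^2\le\|\nabla_h\widetilde{\mathbf{m}}^n\|_2^2$.
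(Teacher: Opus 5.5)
Your proof is correct and matches the paper's argument: a per-edge Law-of-Cosines comparison, in which you write the difference of squared lengths exactly as $(r-s)^2+2(rs-1)(1-c)\ge 0$ and the paper bounds the same quantity $(\alpha_1^2-1)+(\alpha_2^2-1)-2\cos\theta(\alpha_1\alpha_2-1)$ below by $(\alpha_1-\alpha_2)^2$, followed by summation over the three directions. Your remark that $|\widetilde{\mathbf{m}}^n|\ge 1$ follows from $\widetilde{\mathbf{m}}^n\cdot\mathbf{m}^{n-1}=1$ is equivalent to the paper's orthogonal decomposition $|\widetilde{\mathbf{m}}^n|^2=1+\Delta t^2|\cdot|^2$.
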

\begin{proof}
    The fully discrete scheme defined by \eqref{discretescheme} is equivalent to
    \begin{equation*} 
    \setlength{\abovedisplayskip}{4pt} 
         \widetilde{\mathbf{m}}^n = \mathbf{m}^{n-1}-\Delta t \mathbf{m}^{n-1}\times(\beta\Delta_h\widetilde{\mathbf{m}}^{n}+\gamma(\mathbf{m}^{n-1}\times\Delta_h\widetilde{\mathbf{m}}^n)) ,  
         \quad \mbox{with} \, \, \, \mathbf{m}^n = \widetilde{\mathbf{m}}^n/|\widetilde{\mathbf{m}}^n| . 
    \setlength{\belowdisplayskip}{4pt} 
    \end{equation*}
Since the two vectors on the right hand are orthogonal, it is clear that 
    \begin{equation*} 
    \setlength{\abovedisplayskip}{4pt} 
        |\widetilde{\mathbf{m}}^n|^2 = {\color{black}|\mathbf{m}^{n-1}|^2} + \Delta t^2|\mathbf{m}^{n-1}\times(\beta\Delta_h\widetilde{\mathbf{m}}^{n}+\gamma(\mathbf{m}^{n-1}\times\Delta_h\widetilde{\mathbf{m}}^n))|^2 \geq 1.  
    \setlength{\belowdisplayskip}{4pt} 
    \end{equation*}
Next, we take \(|\nabla_h^x \mathbf{m}^n|^2\) as an example, and the same argument could be applied to the gradient in the other directions. The finite difference in the $x$ direction takes a form of 
    \begin{equation*} 
    \setlength{\abovedisplayskip}{4pt} 
        |\nabla^x_h\mathbf{m}^n_{i+1/2,j,k}|^2 = |h^{-1}(\mathbf{m}^n_{i+1,j,k} - \mathbf{m}^n_{i,j,k})|^2. 
    \setlength{\belowdisplayskip}{4pt} 
    \end{equation*}
    Set $\theta = \cos^{-1} \langle\mathbf{m}^n_{i+1,j,k},~\mathbf{m}^n_{i,j,k}\rangle,\,0 \leq \theta \leq \pi,~ \alpha_1 = |\widetilde{\mathbf{m}}^{n}_{i+1,j,k}|\geq1,\,\alpha_2 = |\widetilde{\mathbf{m}}^{n}_{i,j,k}| \geq 1$. An application of the law of cosines (at the point-wise level) reveals that 
    \begin{equation}\label{lem2eq2} 
    \setlength{\abovedisplayskip}{4pt} 
        h^2|\nabla^x_h\widetilde{\mathbf{m}}_{i+1/2}^n|^2  = \alpha_1^2 + \alpha_2^2 - 2\alpha_1\alpha_2\cos\theta, \quad 
        h^2|\nabla_h^x\mathbf{m}_{i+1/2}^n|^2 = 2 - 2\cos\theta .
    \setlength{\belowdisplayskip}{4pt} 
    \end{equation}
    Taking the difference, and using the fact that $\alpha_i \geq 1$ ($i=1,2$), we arrive at 
    \begin{equation*}
    \setlength{\abovedisplayskip}{4pt} 
    \aligned
        &
         h^2(|\nabla_h^x\widetilde{\mathbf{m}}^n_{i+1/2}|^2 - |\nabla_h^x\mathbf{m}^n_{i+1/2}|^2) \\
      =& (\alpha_1^2 - 1) + (\alpha^2_2 - 1)-2\cos\theta(\alpha_1\alpha_2-1) 
       \geq (\alpha_1 - \alpha_2)^2 \geq 0.
    \endaligned
    \setlength{\belowdisplayskip}{4pt} 
    \end{equation*}
    Therefore, a summation over all the directions yields the point-wise result \eqref{lem2eq1}.
\end{proof}

  Now we derive the original energy dissipation law for the fully discrete scheme \eqref{discretescheme}.
\medskip
\begin{theorem}\label{discretethm1}
    Let $\{\mathbf{m}^n,\widetilde{\mathbf{m}}^{n}\}$ be the solution of the fully discrete finite difference scheme \eqref{discretescheme}. The following original energy dissipation law is preserved, for any $1\leq n \leq N$
     \begin{equation} \label{thm3_original energy result} 
     \setlength{\abovedisplayskip}{4pt} 
     \frac{1}{2}\|\nabla_h \mathbf{m}^{n}\|_{2}^2 - \frac{1}{2}\|\nabla_h \mathbf{m}^{n-1}\|^2_{2}\leq  -\gamma\Delta t\|\mathbf{m}^{n-1} \times \Delta_h\widetilde{\mathbf{m}}^n\|^2_{2} \leq 0. 
     \setlength{\belowdisplayskip}{4pt} 
 \end{equation}
    In turn, the following unconditional $H_h^1$ bound becomes available: 
    \begin{equation}\label{thm3.2eq1} 
      \setlength{\abovedisplayskip}{4pt} 
         \frac{1}{2}\|\nabla_h\mathbf{m}^n\|^2_{2}+ \gamma\Delta t\sum^{n}_{k=1}\|\mathbf{m}^{k-1} \times\Delta_h\widetilde{\mathbf{m}}^k\|_{2}^2 \leq \frac{1}{2}\|\nabla_h\mathbf{m}^0\|^2_{2}. 
         \setlength{\belowdisplayskip}{4pt} 
    \end{equation}    
\end{theorem}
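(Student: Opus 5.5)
The plan is to take the discrete $\ell^2$ inner product of the intermediate stage of \eqref{discretescheme} with $-\Delta_h\widetilde{\mathbf{m}}^n$. This yields an $H^1_h$ inequality for $\widetilde{\mathbf{m}}^n$, and Lemma~\ref{lem2} then carries it over to the projected solution $\mathbf{m}^n$.

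\textbf{Left-hand side.} By the summation by parts formula~\eqref{summation-1},
\[
-\langle \widetilde{\mathbf{m}}^n-\mathbf{m}^{n-1},\Delta_h\widetilde{\mathbf{m}}^n\rangle=\langle \nabla_h(\widetilde{\mathbf{m}}^n-\mathbf{m}^{n-1}),\nabla_h\widetilde{\mathbf{m}}^n\rangle .
\]
The polarization identity then gives
\[
\tfrac12\|\nabla_h\widetilde{\mathbf{m}}^n\|_2^2-\tfrac12\|\nabla_h\mathbf{m}^{n-1}\|_2^2+\tfrac12\|\nabla_h(\widetilde{\mathbf{m}}^n-\mathbf{m}^{n-1})\|_2^2 .
\]

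\textbf{Right-hand side.} Both terms are handled pointwise.
\begin{itemize}
\item The $\beta$ term vanishes, since $(\mathbf{m}^{n-1}\times\Delta_h\widetilde{\mathbf{m}}^n)\cdot\Delta_h\widetilde{\mathbf{m}}^n=0$.
\item For the $\gamma$ term, use $(\mathbf{a}\times(\mathbf{a}\times\mathbf{b}))\cdot\mathbf{b}=-|\mathbf{a}\times\mathbf{b}|^2$ with $\mathbf{a}=\mathbf{m}^{n-1}$ and $\mathbf{b}=\Delta_h\widetilde{\mathbf{m}}^n$.
\end{itemize}
After multiplying by $\Delta t$, the right side becomes exactly $-\gamma\Delta t\|\mathbf{m}^{n-1}\times\Delta_h\widetilde{\mathbf{m}}^n\|_2^2$. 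Dropping the nonnegative term $\frac12\|\nabla_h(\widetilde{\mathbf{m}}^n-\mathbf{m}^{n-1})\|_2^2$ gives
\[
\tfrac12\|\nabla_h\widetilde{\mathbf{m}}^n\|_2^2-\tfrac12\|\nabla_h\mathbf{m}^{n-1}\|_2^2\le-\gamma\Delta t\|\mathbf{m}^{n-1}\times\Delta_h\widetilde{\mathbf{m}}^n\|_2^2 .
\]

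\textbf{Transfer to $\mathbf{m}^n$.} This step needs the hypotheses of Lemma~\ref{lem2}, namely $|\mathbf{m}^{n-1}|=1$ and $|\widetilde{\mathbf{m}}^n|\ge1$.
\begin{itemize}
\item The orthogonality argument in the proof of Lemma~\ref{lem2} shows $|\widetilde{\mathbf{m}}^n|^2=|\mathbf{m}^{n-1}|^2+(\text{nonnegative})\ge1$, provided $|\mathbf{m}^{n-1}|=1$.
\item $|\mathbf{m}^{n-1}|=1$ holds by the projection step for $n-1\ge1$, and by the initial data \eqref{e_initial condition} for $n=1$, by induction.
\item In particular $\widetilde{\mathbf{m}}^n\neq0$, so the normalization is well defined.
\end{itemize}
Lemma~\ref{lem2} then gives $\|\nabla_h\mathbf{m}^n\|_2^2\le\|\nabla_h\widetilde{\mathbf{m}}^n\|_2^2$. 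Replacing $\widetilde{\mathbf{m}}^n$ by $\mathbf{m}^n$ in the first term yields \eqref{thm3_original energy result}.

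\textbf{Telescoping.} Summing \eqref{thm3_original energy result} from $k=1$ to $n$ produces \eqref{thm3.2eq1}.

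The only delicate point is the induction that guarantees the unit length of $\mathbf{m}^{n-1}$, which is needed to apply Lemma~\ref{lem2}. The rest is routine algebra.
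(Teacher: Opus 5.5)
Your proposal is correct and follows the paper's proof. Like the paper, you test the intermediate stage with $-\Delta_h\widetilde{\mathbf{m}}^n$, use orthogonality and the triple-product identity to reduce the right side to $-\gamma\Delta t\|\mathbf{m}^{n-1}\times\Delta_h\widetilde{\mathbf{m}}^n\|_2^2$, then pass from $\widetilde{\mathbf{m}}^n$ to $\mathbf{m}^n$ via Lemma~\ref{lem2} and telescope; your explicit polarization identity and your induction on $|\mathbf{m}^{n-1}|=1$, which the paper leaves implicit, are added detail rather than a different argument.
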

\begin{proof}
    Taking the inner product with \eqref{discretescheme} by $-\Delta \widetilde{\mathbf{m}}^n$, and making use of an identity
    \begin{equation*}
	\aligned
	(\textbf{a} \times \textbf{b})\cdot\textbf{c} = -(\textbf{a} \times \textbf{c})\cdot \textbf{b}, \ \ \textbf{a},\textbf{b}, \textbf{c} \in \mathbb{R}^3,
	\endaligned
    \end{equation*}
we obtain
 \begin{equation*}
     \setlength{\abovedisplayskip}{4pt} 
     \frac{1}{2}\|\nabla_h \mathbf{m}^{n}\|_{2}^2 \leq \frac{1}{2}\|\nabla_h \widetilde{\mathbf{m}}^{n}\|_{2}^2  \leq \frac{1}{2}\|\nabla \mathbf{m}^{n-1}\|^2_{2} -\gamma\Delta t\|\mathbf{m}^{n-1} \times \Delta_h\widetilde{\mathbf{m}}^n\|^2_{2}, 
     \setlength{\belowdisplayskip}{4pt} 
 \end{equation*}
 in which Lemma \ref{lem2} has been applied in the first step. A summation from $k=1$ to $k=n$) of the above inequality leads to \eqref{thm3_original energy result}, and the discrete $H^1$ bound~\eqref{thm3.2eq1} becomes a direct consequence.
\end{proof}

\section{Error estimate}
In this section, we present a rigorous error analysis of the fully discrete scheme \eqref{discretescheme}. This analysis is nontrivial due to the highly complicated nonlinear structure inherent in the constructed scheme. In the theoretical justification of the error estimate, special care must be taken to handle the error term associated with $\gamma\mathbf{m}^{n-1}\times(\mathbf{m}^{n-1}\times\Delta\widetilde{\mathbf{m}}^n)$. To accomplish this goal, we have to derive an equivalent weak form of the original numerical scheme, which comes from the point-wise length preservation, and the optimal error estimate could be established based on this reformulation. After presenting the complete theoretical proof, we will elaborate on the necessity of employing the equivalent weak form to enhance the understanding of this method.
\subsection{Proof process}
With the help of an identity
   \begin{equation}\label{e_curl operator}
   \setlength{\abovedisplayskip}{4pt} 
	\textbf{a} \times ( \textbf{b} \times \textbf{c} ) = (\textbf{a} \cdot \textbf{c}) \textbf{b} - (\textbf{a} \cdot \textbf{b}) \textbf{c}, \ \ \textbf{a},\textbf{b}, \textbf{c} \in \mathbb{R}^3,
   \setlength{\belowdisplayskip}{4pt} 
    \end{equation}
it is clear that the original numerical scheme \eqref{discretescheme} could be equivalently rewritten as 
\begin{equation}\label{discrete}
    \setlength{\abovedisplayskip}{4pt} 
    \displaystyle
         \frac{\widetilde{\mathbf{m}}^n - \mathbf{m}^{n-1}}{\Delta t} -\gamma \Delta_h\widetilde{\mathbf{m}}^n= -\beta \mathbf{m}^{n-1}\times\Delta_h\widetilde{\mathbf{m}}^{n}-\gamma(\mathbf{m}^{n-1}\cdot\Delta_h\widetilde{\mathbf{m}}^{n})\mathbf{m}^{n-1},  \quad 
         \mbox{with} \, \, \,  \mathbf{m}^n = \frac{\widetilde{\mathbf{m}}^{n}}{|\widetilde{\mathbf{m}}^{n}|} . 
         \setlength{\belowdisplayskip}{4pt} 
\end{equation}

For any vector function $\mathbf{v} \in \mathcal{M}_h^3$, a discrete inner product with \eqref{discrete} by $\mathbf{v}$ leads to
\begin{equation}\label{discreteip}
     \setlength{\abovedisplayskip}{4pt} 
    \aligned
    &\Big\langle \frac{\widetilde{\mathbf{m}}^{n} - \mathbf{m}^{n-1}}{\Delta t},\mathbf{v} \Big\rangle + \gamma \langle \nabla_h\widetilde{\mathbf{m}}^n,\nabla_h\mathbf{v} \rangle \\
    = &\beta \langle \nabla_h\widetilde{\mathbf{m}},\nabla_h(\mathbf{v} \times \mathbf{m}^{n-1}) \rangle 
    -\gamma \langle \mathbf{m}^{n-1}\cdot\Delta_h\mathbf{m}^{n-1},\mathbf{m}^{n-1}\cdot\mathbf{v} \rangle \\
    & -\gamma \langle \mathbf{m}^{n-1}\cdot\Delta_h(\widetilde{\mathbf{m}}^n-\mathbf{m}^{n-1}),\mathbf{m}^{n-1}\cdot\mathbf{v} \rangle \\
    = & \beta \langle \nabla_h\widetilde{\mathbf{m}}^n,\nabla_h\mathbf{v} \times \Pi_h\mathbf{m}^{n-1} \rangle 
    + \beta \langle \nabla_h\widetilde{\mathbf{m}}^n,\Pi_h\mathbf{v} \times \nabla_h\mathbf{m}^{n-1} \rangle \\
    & +\gamma \langle \nabla_h\mathbf{m}^{n-1},\nabla_h(\mathbf{m}^{n-1}\cdot\mathbf{v})\Pi_h\mathbf{m}^{n-1} + \Pi_h(\mathbf{m}^{n-1}\cdot\mathbf{v})\nabla_h\mathbf{m}^{n-1} \rangle  \\
    & + \gamma \langle \nabla_h(\widetilde{\mathbf{m}}^n-\mathbf{m}^{n-1}),\nabla_h(\mathbf{m}^{n-1}\cdot\mathbf{v})\Pi_h\mathbf{m}^{n-1} 
    + \Pi_h(\mathbf{m}^{n-1}\cdot\mathbf{v})\nabla_h\mathbf{m}^{n-1} \rangle , 
    \endaligned
    \setlength{\belowdisplayskip}{4pt} 
\end{equation}
in which the summation by parts formula~\eqref{summation-1} and the discrete gradient expansion formula~\eqref{product gradient-1} have been repeated applied. Moreover, a few interpolation and discrete gradient equalities are needed. 
\medskip
\begin{lemma}\label{lem4}
    Suppose $\{\mathbf{m}^n,\widetilde{\mathbf{m}}^{n}\}$ be the solution of \eqref{discretescheme} with \(|\mathbf{m}^{n-1}|^2=1\). Then we have
    \begin{align}
      \setlength{\abovedisplayskip}{4pt} 
        \Pi_h\mathbf{m}^{n-1} \cdot(\nabla_h \mathbf{m}^{n-1} ) &= 0,\label{lem4eq1}\\
        \Pi_h\mathbf{m}^{n-1} \cdot \nabla_h(\widetilde{\mathbf{m}}^n -\mathbf{m}^{n-1}) &= - \Pi_h(\widetilde{\mathbf{m}}^n -\mathbf{m}^{n-1})\cdot\nabla_h\mathbf{m}^{n-1} . \label{lem4eq2} 
      \setlength{\belowdisplayskip}{4pt} 
    \end{align}
\end{lemma}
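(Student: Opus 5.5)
The plan is to prove both identities pointwise on the staggered grid, using the discrete product rule~\eqref{product gradient-1} for the dot product, $\nabla_h(\boldsymbol{f}\cdot\boldsymbol{g}) = \nabla_h\boldsymbol{f}\cdot\Pi_h\boldsymbol{g} + \nabla_h\boldsymbol{g}\cdot\Pi_h\boldsymbol{f}$. Here $\Pi_h$ is the two-point average in the relevant direction, e.g.\ $(\Pi_h^x\boldsymbol{f})_{i+1/2,j,k} = \frac{1}{2}(\boldsymbol{f}_{i+1,j,k}+\boldsymbol{f}_{i,j,k})$. Since all three directions are handled identically, I will work only in the $x$ direction at a fixed midpoint $(i+1/2,j,k)$.

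For \eqref{lem4eq1}, take $\boldsymbol{f}=\boldsymbol{g}=\mathbf{m}^{n-1}$ in the product rule. This gives $\nabla_h |\mathbf{m}^{n-1}|^2 = 2\,\Pi_h\mathbf{m}^{n-1}\cdot\nabla_h\mathbf{m}^{n-1}$. The same fact can be checked directly: $\frac{1}{2}(\mathbf{a}+\mathbf{b})\cdot h^{-1}(\mathbf{a}-\mathbf{b}) = \frac{1}{2h}(|\mathbf{a}|^2-|\mathbf{b}|^2)$ with $\mathbf{a}=\mathbf{m}^{n-1}_{i+1,j,k}$ and $\mathbf{b}=\mathbf{m}^{n-1}_{i,j,k}$. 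By hypothesis $|\mathbf{m}^{n-1}|\equiv 1$ at every grid point (this is guaranteed by the projection step, or by the initial data when $n=1$), so the left side is zero and \eqref{lem4eq1} follows.

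For \eqref{lem4eq2}, write $\mathbf{w}=\widetilde{\mathbf{m}}^n-\mathbf{m}^{n-1}$. Then $\mathbf{m}^{n-1}\cdot\mathbf{w} = \mathbf{m}^{n-1}\cdot\widetilde{\mathbf{m}}^n - 1$. From the scheme (as in the proof of Lemma~\ref{lem2}), $\mathbf{w} = -\Delta t\,\mathbf{m}^{n-1}\times(\cdots)$ is pointwise orthogonal to $\mathbf{m}^{n-1}$, so $\mathbf{m}^{n-1}\cdot\mathbf{w}\equiv 0$ on the grid. Applying $\nabla_h$ and the product rule gives $0=\nabla_h(\mathbf{m}^{n-1}\cdot\mathbf{w}) = \Pi_h\mathbf{m}^{n-1}\cdot\nabla_h\mathbf{w} + \Pi_h\mathbf{w}\cdot\nabla_h\mathbf{m}^{n-1}$, which is exactly \eqref{lem4eq2}.

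I expect the main obstacle to be confirming that the discrete product rule holds exactly, with no $O(h^2)$ remainder, for the averaging operator $\Pi_h$. It does, through the identity $\mathbf{a}\cdot\mathbf{c}-\mathbf{b}\cdot\mathbf{d} = \frac{1}{2}(\mathbf{a}-\mathbf{b})\cdot(\mathbf{c}+\mathbf{d}) + \frac{1}{2}(\mathbf{a}+\mathbf{b})\cdot(\mathbf{c}-\mathbf{d})$. I will write out this one-line verification explicitly. The argument also relies on the orthogonality $\mathbf{m}^{n-1}\cdot\mathbf{w}=0$ holding at every grid point, which follows directly from the cross-product structure of the right-hand side of \eqref{discretescheme}.
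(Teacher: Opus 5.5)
Your proposal is correct and matches the paper's proof essentially step for step: \eqref{lem4eq1} follows from the pointwise identity $\frac12(\mathbf{a}+\mathbf{b})\cdot h^{-1}(\mathbf{a}-\mathbf{b})=\frac{1}{2h}(|\mathbf{a}|^2-|\mathbf{b}|^2)=0$, and \eqref{lem4eq2} follows from the pointwise orthogonality $\mathbf{m}^{n-1}\cdot(\widetilde{\mathbf{m}}^n-\mathbf{m}^{n-1})=0$ (obtained by dotting the scheme with $\mathbf{m}^{n-1}$) combined with the exact discrete product rule~\eqref{product gradient-1}. Your explicit check that this product rule holds with no $O(h^2)$ remainder is a small but welcome addition that the paper leaves implicit.
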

\begin{proof}
    Based on the point-wise length preservation $|\mathbf{m}^{n-1} |=1$, we see that 
    \begin{equation*} 
      \setlength{\abovedisplayskip}{4pt} 
        \Pi_h\mathbf{m}^{n-1}_{i+1/2, j,k} \cdot \nabla_h^x\mathbf{m}^{n-1}_{i+1/2, j,k} = \frac{\mathbf{m}^{\color{black}n-1} _{i+1,j,k}+\mathbf{m}^{\color{black}n-1}_{i,j,k}}{2} \cdot\frac{\mathbf{m}^{\color{black}n-1}_{i+1,j,k}-\mathbf{m}^{\color{black}n-1}_{i,j,k}}{h} 
        =0.
     \setlength{\belowdisplayskip}{4pt} 
    \end{equation*}
The same argument could be applied in the $y$ and $z$ directions. As a result, the desired result \eqref{lem4eq1} becomes valid. Furthermore, taking a point-wise dot product with \eqref{discretescheme} by \(\mathbf{m}^{n-1}\) gives 
    \begin{equation*} 
       \setlength{\abovedisplayskip}{4pt} 
        \widetilde{\mathbf{m}}^n\cdot\mathbf{m}^{n-1} = |\mathbf{m}^{n-1}|^2 = 1. 
        \setlength{\belowdisplayskip}{4pt} 
    \end{equation*}
With the help of the discrete gradient expansion formula~\eqref{product gradient-1}, the following identity is clear 
    \begin{equation*} 
    \setlength{\abovedisplayskip}{4pt} 
        \nabla_h(\mathbf{\widetilde{\mathbf{m}}}^n\cdot\mathbf{m}^{n-1} - \mathbf{m}^{n-1}\cdot\mathbf{m}^{n-1}) = \Pi_h\mathbf{m}^{n-1} \cdot \nabla_h(\widetilde{\mathbf{m}}^n -\mathbf{m}^{n-1}) + \Pi_h(\widetilde{\mathbf{m}}^n -\mathbf{m}^{n-1})\cdot\nabla_h\mathbf{m}^{n-1} = 0 ,   
    \setlength{\belowdisplayskip}{4pt} 
    \end{equation*}
which in turn yields the desired result \eqref{lem4eq2}. This completes the proof of Lemma~\ref{lem4}. 
\end{proof}

Based on the identities \eqref{lem4eq1} and \eqref{lem4eq2}, we see that the weak form \eqref{discreteip} could be simplified as 
\begin{equation}\label{m_hconsistency}
    \setlength{\abovedisplayskip}{4pt} 
    \aligned
    & \Big\langle \frac{\widetilde{\mathbf{m}}^{n} - \mathbf{m}^{n-1}}{\Delta t},\mathbf{v} \Big\rangle 
     + \gamma \langle \nabla_h\widetilde{\mathbf{m}}^n,\nabla_h\mathbf{v} \rangle  
    = \beta \langle \nabla_h\widetilde{\mathbf{m}}^n,\nabla_h\mathbf{v} \times \Pi_h\mathbf{m}^{n-1} \rangle + \beta \langle \nabla_h\widetilde{\mathbf{m}}^n,\Pi_h\mathbf{v} \times \nabla_h\mathbf{m}^{n-1} \rangle \\
    & \quad +\gamma \langle \nabla_h\widetilde{\mathbf{m}}^{n}:\nabla_h\mathbf{m}^{n-1},\Pi_h(\mathbf{m}^{n-1}\cdot\mathbf{v}) \rangle -\gamma \langle \Pi_h(\widetilde{\mathbf{m}}^n-\mathbf{m}^{n-1})\cdot\nabla_h\mathbf{m}^{n-1},\nabla_h(\mathbf{m}^{n-1}\cdot\mathbf{v}) \rangle .
    \endaligned
     \setlength{\belowdisplayskip}{4pt} 
\end{equation}
Meanwhile, we denote $\mathbf{m}_e$ as the exact solution for the PDE system \eqref{e_original model}. A careful Taylor expansion gives the truncation error and consistency estimate:
\textcolor{black}{
\begin{align} 
  \setlength{\abovedisplayskip}{4pt} 
  & 
    \frac{\mathbf{m}^n_e - \mathbf{m}^{n-1}_e}{\Delta t} -\gamma \Delta_h\mathbf{m}^n_e = -\beta \mathbf{m}^{n-1}_e\times\Delta_h\mathbf{m}^n_e-\gamma(\mathbf{m}^{n-1}_e\cdot\Delta_h\mathbf{m}^{n}_e)\mathbf{m}^{n-1}_e - \mathbf{R}^n , \label{consistency} 
\\
  & \mbox{with} \quad 
    \mathbf{R}^n = \frac{\partial \mathbf{m}^{n}_e}{\partial t} - \frac{\mathbf{m}^{n}_e - \mathbf{m}^{n-1}_e}{\Delta t} + \beta ( \mathbf{m}^{n}_e\times\Delta\mathbf{m}^{n}_e - \mathbf{m}^{n-1}_e \times \Delta_h\mathbf{m}^n_e ) \nonumber \\
    & \qquad \qquad \qquad 
    + \gamma ( \mathbf{m}^{n}_e\times(\mathbf{m}^{n}_e\times\Delta\mathbf{m}^{n}_e) 
    - \mathbf{m}^{n-1}_e\times(\mathbf{m}^{n-1}_e \times \Delta_h\mathbf{m}^{n}_e) ) \nonumber \\
    &\qquad \qquad \qquad 
    +\gamma \Delta_h(\mathbf{m}^{n-1}_e\cdot(\mathbf{m}^n_e - \mathbf{m}^{n-1}_e))\mathbf{m}^{n-1}_e\nonumber,
\\
  & 
   \Delta t \sum^{n}_{k=1}\|\mathbf{R}^k\|^2_{2} \leq C(\|\mathbf{m}_e\|_{L^{\infty}(0,T;H^{4}(\Omega))},\|\partial_t\mathbf{m}_e\|_{L^\infty(0,T;H^2(\Omega))},\|\partial_{tt}\mathbf{m}_e\|_{L^2(0,T;L^2(\Omega))})(h^4 + \Delta t^2 ).  \nonumber 
   \setlength{\belowdisplayskip}{4pt} 
\end{align} 
}
Similarly to \eqref{m_hconsistency}, for any vector function $\mathbf{v} \in \mathcal{M}_h^3$, taking a discrete inner product with \eqref{consistency} by a test function $\mathbf{v}$ leads to
\begin{equation}\label{mconsistency}
    \setlength{\abovedisplayskip}{4pt} 
    \aligned
    & \Big\langle \frac{\mathbf{m}_e^{n} - \mathbf{m}_e^{n-1}}{\Delta t},\mathbf{v} \Big\rangle 
     + \gamma \langle \nabla_h \mathbf{m}_e^n, \nabla_h\mathbf{v} \rangle  
    = \beta \langle \nabla_h \mathbf{m}_e^n, \nabla_h \mathbf{v} \times \Pi_h\mathbf{m}_e^{n-1} \rangle 
    + \beta \langle \nabla_h \mathbf{m}_e^n,\Pi_h \mathbf{v} \times \nabla_h \mathbf{m}_e^{n-1} \rangle \\
    &+\gamma \langle \nabla_h \mathbf{m}_e^{n}: \nabla_h \mathbf{m}_e^{n-1},\Pi_h(\mathbf{m}_e^{n-1} \cdot \mathbf{v}) \rangle -\gamma \langle \Pi_h( \mathbf{m}_e^n - \mathbf{m}_e^{n-1}) 
    \cdot \nabla_h \mathbf{m}_e^{n-1}, \nabla_h(\mathbf{m}_e^{n-1} \cdot \mathbf{v}) \rangle  \textcolor{black}{-\langle\mathbf{R}^n,\mathbf{v}\rangle} .
    \endaligned
     \setlength{\belowdisplayskip}{4pt} 
\end{equation}
Subsequently, the error functions are defined at a point-wise level:  
\begin{equation}  
   \setlength{\abovedisplayskip}{4pt} 
         \widetilde{\mathbf{e}}^k = \mathbf{m}^k_e - \widetilde{\mathbf{m}}^k , \, \, \,  
         \mathbf{e}^k = \mathbf{m}^k_e - \mathbf{m}^k ,  \quad \forall \, \, 0 \le k \le N . 
    \setlength{\belowdisplayskip}{4pt} 
\end{equation}
In turn, subtracting \eqref{m_hconsistency} from \eqref{mconsistency} gives an error evolutionary equation, in a weaker form: 
\begin{equation}\label{merror} 
    \setlength{\abovedisplayskip}{4pt} 
    \aligned
    & \langle \frac{\widetilde{\mathbf{e}}^{n} - \mathbf{e}^{n-1}}{\Delta t},\mathbf{v} \rangle 
    + \gamma \langle \nabla_h\widetilde{\mathbf{e}}^n,\nabla_h\mathbf{v} \rangle  
    =  \beta  \langle \nabla_h \widetilde{\mathbf{e}}^n ,\nabla_h\mathbf{v} \times \Pi_h\mathbf{m}^{n-1} \rangle 
    + \beta  \langle \nabla_h \widetilde{\mathbf{e}}^n ,\Pi_h\mathbf{v} \times \nabla_h\mathbf{m}^{n-1} \rangle \\
     &+ \beta \langle \nabla_h \mathbf{m}_e^n, \nabla_h\mathbf{v} \times \Pi_h\mathbf{e}^{n-1} \rangle 
     + \beta \langle \nabla_h \mathbf{m}_e^n, \Pi_h\mathbf{v} \times \nabla_h \mathbf{e}^{n-1} \rangle  \\
    &+\gamma  \langle \nabla_h \mathbf{m}_e^n : \nabla_h \mathbf{m}_e^{n-1} , 
    \Pi_h(\mathbf{e}^{n-1} \cdot\mathbf{v} ) \rangle 
    + \gamma \langle \nabla_h\widetilde{\mathbf{e}}^{n}: \nabla_h\mathbf{m}^{n-1} 
    + \nabla_h \mathbf{m}_e^{n}: \nabla_h\mathbf{e}^{n-1} , \Pi_h(\mathbf{m}^{n-1}\cdot\mathbf{v} ) \rangle  \\
    & - \gamma \langle ( \widetilde{\mathbf{e}}^n - \mathbf{e}^{n-1} ) \cdot \nabla_h \mathbf{m}^{n-1} , \nabla_h ( \mathbf{m}^{n-1} \cdot \mathbf{v} ) \rangle \textcolor{black}{- \gamma  \langle ( \mathbf{m}_e^n - \mathbf{m}_e^{n-1} ) \cdot \nabla_h \mathbf{e}^{n-1} , \nabla_h ( \mathbf{m}_{e}^{n-1} \cdot \mathbf{v} ) \rangle} \\
    & \textcolor{black}{- \gamma \langle (\mathbf{m}^{n}_e - \mathbf{m}^{n-1}_e)\cdot\nabla_h\mathbf{m}^{n-1},\nabla_h(\mathbf{e}^{n-1}\cdot\mathbf{v}) \rangle} + \langle \mathbf{R}^n, \mathbf{v} \rangle .
    \endaligned
    \setlength{\belowdisplayskip}{4pt} 
\end{equation}

    Two index values $p_0$ and $q_0$ are introduced to facilitate the nonlinear error estimates: 
\begin{equation} 
\setlength{\abovedisplayskip}{3pt}
  3 < p_0 = \frac{3}{1 - \frac14 \epsilon_0} , \, \, \,  q_0 = \frac{6}{1 + \frac12 \epsilon_0} < 6 , 
  \quad \mbox{so that}  \, \, \, \frac{1}{p_0} + \frac{1}{q_0} = \frac12 . 
\setlength{\belowdisplayskip}{3pt}
  \label{p-q-1} 
\end{equation} 
It is clear that $p_0$ is slightly greater than 3, and $q_0$ is slightly less than 6.

    Before deriving the error estimate, we first establish a more precise relationship between \(\widetilde{\mathbf{e}}^n\) and \(\mathbf{e}^n\). 
    \medskip
    \begin{lemma}\label{lem6}
        Suppose that $|\mathbf{m}^n| = 1$ and $|\widetilde{\mathbf{m}}^n| \geq 1,\,\,\forall\,\,1 \leq n \leq N$. The following inequality is valid:
        \begin{equation}\label{e_l^2}
        \setlength{\abovedisplayskip}{4pt}
            (i)\, | \mathbf{e}^n |^2 + | \widetilde{\mathbf{e}}^n - \mathbf{e}^n |^2 
            \leq  | \widetilde{\mathbf{e}}^n |^2 \le 2 ( | \mathbf{e}^n |^2 
            + | \widetilde{\mathbf{e}}^n - \mathbf{e}^n |^2 ) , \quad 
            \mbox{at a point-wise level} , 
        \setlength{\belowdisplayskip}{4pt}
        \end{equation}
        which implies that $\|\mathbf{e}^n\|_2^2 + \| \widetilde{\mathbf{e}}^n - \mathbf{e}^n \|_2^2 \leq \|\widetilde{\mathbf{e}}^n\|^2_{2} \le 2 ( \|\mathbf{e}^n\|_2^2 + \| \widetilde{\mathbf{e}}^n - \mathbf{e}^n \|_2^2)$. Furthermore, under the condition \(\|\widetilde{\mathbf{e}}^n\|_{\infty} \leq 1\), we have
        \begin{equation}\label{e_lh1}
          \setlength{\abovedisplayskip}{4pt}
            (ii)\,\|\nabla_h \mathbf{e}^n\|_{2} \leq M_0 (\|\widetilde{\mathbf{e}}^n\|_{2} + \|\nabla_h \widetilde{\mathbf{e}}^n\|_{2}), 
        \setlength{\belowdisplayskip}{4pt}
        \end{equation}
        where 
       \(M_0\) is a constant that only depends on $\Omega$ and the exact solution.
    \end{lemma}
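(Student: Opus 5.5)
The plan is to prove the two parts separately: part (i) by a pointwise geometric decomposition of the projection step, and part (ii) by writing $\mathbf{e}^n-\widetilde{\mathbf{e}}^n$ as a product and applying the discrete product rule~\eqref{product gradient-1}. Throughout, set $\alpha = |\widetilde{\mathbf{m}}^n| \ge 1$ pointwise, so that $\mathbf{m}^n = \widetilde{\mathbf{m}}^n/\alpha$ and $|\mathbf{m}_e^n| = 1$.

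\textbf{Part (i).} We have the exact decomposition $\widetilde{\mathbf{e}}^n = \mathbf{e}^n + (\mathbf{m}^n - \widetilde{\mathbf{m}}^n)$, with $\mathbf{m}^n - \widetilde{\mathbf{m}}^n = (1-\alpha)\mathbf{m}^n = \widetilde{\mathbf{e}}^n - \mathbf{e}^n$. Expanding the square gives
$$|\widetilde{\mathbf{e}}^n|^2 = |\mathbf{e}^n|^2 + |\widetilde{\mathbf{e}}^n-\mathbf{e}^n|^2 + 2\,\mathbf{e}^n\cdot(\widetilde{\mathbf{e}}^n-\mathbf{e}^n).$$
Since $|\mathbf{m}^n| = 1$, the cross term equals
$$2(1-\alpha)(\mathbf{m}_e^n-\mathbf{m}^n)\cdot\mathbf{m}^n = 2(\alpha-1)(1-\mathbf{m}_e^n\cdot\mathbf{m}^n).$$
Both factors are nonnegative, because $\alpha\ge1$ and $|\mathbf{m}_e^n\cdot\mathbf{m}^n|\le1$. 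This is the law-of-cosines type sign observation, and it gives the lower bound. The upper bound is simply $|\mathbf{a}+\mathbf{b}|^2\le 2|\mathbf{a}|^2+2|\mathbf{b}|^2$. Summing over the grid gives the $\ell^2$ version. As a by-product we record the pointwise bounds $\alpha-1 = |\widetilde{\mathbf{m}}^n|-|\mathbf{m}_e^n| \le |\widetilde{\mathbf{e}}^n|$ and $|\mathbf{e}^n|\le|\widetilde{\mathbf{e}}^n|$.

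\textbf{Part (ii).} We write $\mathbf{e}^n = \widetilde{\mathbf{e}}^n - (\alpha-1)\mathbf{m}^n$. By~\eqref{product gradient-1},
$$\nabla_h\mathbf{e}^n = \nabla_h\widetilde{\mathbf{e}}^n - \nabla_h(\alpha-1)\,\Pi_h\mathbf{m}^n - \Pi_h(\alpha-1)\,\nabla_h\mathbf{m}^n.$$
\begin{itemize}
\item The last term is bounded pointwise by $|\Pi_h\widetilde{\mathbf{e}}^n|\,|\nabla_h\mathbf{m}^n|$. By Lemma~\ref{lem2}, $|\nabla_h\mathbf{m}^n|\le|\nabla_h\widetilde{\mathbf{m}}^n|\le|\nabla_h\mathbf{m}_e^n|+|\nabla_h\widetilde{\mathbf{e}}^n|$. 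The exact solution is smooth, so $\|\nabla_h\mathbf{m}_e^n\|_\infty\le C$, and the assumption $\|\widetilde{\mathbf{e}}^n\|_\infty\le1$ then bounds this term by $C|\Pi_h\widetilde{\mathbf{e}}^n| + |\nabla_h\widetilde{\mathbf{e}}^n|$.
\item For the middle term, $|\Pi_h\mathbf{m}^n|\le1$, so it remains to control $\nabla_h(\alpha-1)$. Here we use the identity
$$|\widetilde{\mathbf{m}}^n|-|\mathbf{m}_e^n| = -\widetilde{\mathbf{e}}^n\cdot\mathbf{f},\qquad \mathbf{f} := \frac{\widetilde{\mathbf{m}}^n+\mathbf{m}_e^n}{|\widetilde{\mathbf{m}}^n|+|\mathbf{m}_e^n|},$$
which satisfies $|\mathbf{f}|\le1$ and has denominator $\ge2$. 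The product rule gives $\nabla_h(\widetilde{\mathbf{e}}^n\cdot\mathbf{f}) = \nabla_h\widetilde{\mathbf{e}}^n\cdot\Pi_h\mathbf{f} + \Pi_h\widetilde{\mathbf{e}}^n\cdot\nabla_h\mathbf{f}$. The first piece is at most $|\nabla_h\widetilde{\mathbf{e}}^n|$.
\end{itemize}

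The main obstacle is the remaining piece $\Pi_h\widetilde{\mathbf{e}}^n\cdot\nabla_h\mathbf{f}$, because $\mathbf{f}$ is a quotient of grid functions. The plan is to expand the difference quotient of the quotient. The denominators are bounded below by $2$, and the differences of the norms satisfy $\big|\nabla_h|\mathbf{a}|\big|\le|\nabla_h\mathbf{a}|$. This should give the pointwise bound
$$|\nabla_h\mathbf{f}|\le C\big(|\nabla_h\widetilde{\mathbf{m}}^n|+|\nabla_h\mathbf{m}_e^n|\big)\le C\big(1+|\nabla_h\widetilde{\mathbf{e}}^n|\big).$$
Multiplying by $|\Pi_h\widetilde{\mathbf{e}}^n|\le1$, this piece is at most $C|\Pi_h\widetilde{\mathbf{e}}^n|+C|\nabla_h\widetilde{\mathbf{e}}^n|$.

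Collecting the three contributions, we obtain pointwise
$$|\nabla_h\mathbf{e}^n|\le C\big(|\Pi_h\widetilde{\mathbf{e}}^n|+|\nabla_h\widetilde{\mathbf{e}}^n|\big),$$
with $C$ depending only on $\|\nabla_h\mathbf{m}_e\|_\infty$. Taking $\ell^2$ norms and using $\|\Pi_h\widetilde{\mathbf{e}}^n\|_2\le\|\widetilde{\mathbf{e}}^n\|_2$ (valid under periodicity) gives~\eqref{e_lh1}.
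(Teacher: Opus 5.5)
Your proof is correct. Part (i) is the paper's own argument: the paper's law-of-cosines computation with $\theta=\cos^{-1}(\mathbf{m}^n\cdot\mathbf{m}_e^n)$ is exactly your observation that the cross term equals $2(\alpha-1)(1-\mathbf{m}_e^n\cdot\mathbf{m}^n)\ge 0$.

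For part (ii) you use a different splitting, and the comparison is instructive.

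\textbf{The paper's splitting.} The paper writes $\mathbf{e}^n=\mathbf{m}_e^n\big(1-|\widetilde{\mathbf{m}}^n|^{-1}\big)+\widetilde{\mathbf{e}}^n/|\widetilde{\mathbf{m}}^n|$ and bounds the four product-rule terms $J_1,\dots,J_4$ in \eqref{all}. The vector factor is always the smooth $\mathbf{m}_e^n$, so no gradient of the projected solution ever appears. The cost is that the gradient of $1/|\widetilde{\mathbf{m}}^n|$ has to be handled twice. In $J_2$ the paper must use $|\mathbf{m}_e^n|\equiv 1$, through the quotient comparison \eqref{lem4.3eq3}, to see that $\nabla_h|\widetilde{\mathbf{m}}^n|$ is controlled by $|\nabla_h\widetilde{\mathbf{e}}^n|+C\,\Pi_h|\widetilde{\mathbf{e}}^n|$. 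The naive bound $|\nabla_h\widetilde{\mathbf{m}}^n|$ would leave an uncompensated $|\nabla_h\mathbf{m}_e^n|$.

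\textbf{Your splitting.} You write $\mathbf{e}^n=\widetilde{\mathbf{e}}^n+(\alpha-1)\mathbf{m}^n$. This costs you a gradient of $\mathbf{m}^n$, which you legitimately absorb with the pointwise contraction of Lemma~\ref{lem2}. In return, the key cancellation becomes transparent. Writing $\alpha-1=-\widetilde{\mathbf{e}}^n\cdot\mathbf{f}$ and applying the product rule once shows that every piece of $\nabla_h\alpha$ carries either $\nabla_h\widetilde{\mathbf{e}}^n$ or a factor $\Pi_h\widetilde{\mathbf{e}}^n$.

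Your planned bound on $\nabla_h\mathbf{f}$ does go through. With $s=|\widetilde{\mathbf{m}}^n|+1\ge 2$ and $|\widetilde{\mathbf{m}}^n+\mathbf{m}_e^n|\le s$ pointwise, one gets $|\Pi_h(\widetilde{\mathbf{m}}^n+\mathbf{m}_e^n)|\,|\nabla_h s^{-1}|\le\tfrac12\big(s_i^{-1}+s_{i+1}^{-1}\big)|\nabla_h\widetilde{\mathbf{m}}^n|\le\tfrac12|\nabla_h\widetilde{\mathbf{m}}^n|$. No upper bound on $|\widetilde{\mathbf{m}}^n|$ is even needed.

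Two small slips should be fixed.
\begin{itemize}
\item The identity should read $\mathbf{e}^n=\widetilde{\mathbf{e}}^n+(\alpha-1)\mathbf{m}^n$, since $\mathbf{e}^n-\widetilde{\mathbf{e}}^n=\widetilde{\mathbf{m}}^n-\mathbf{m}^n$. The sign is immaterial for the bounds.
\item In the last term, what you actually have is $0\le\Pi_h(\alpha-1)\le\Pi_h|\widetilde{\mathbf{e}}^n|$, the average of the magnitudes, not $|\Pi_h\widetilde{\mathbf{e}}^n|$. Your final pointwise estimate should therefore be stated with $\Pi_h|\widetilde{\mathbf{e}}^n|$, as in the paper. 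This is harmless, because $\|\Pi_h|\widetilde{\mathbf{e}}^n|\|_2\le\|\widetilde{\mathbf{e}}^n\|_2$ also holds under periodicity.
\end{itemize}
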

    \begin{proof}
        Set $\theta = \cos^{-1} \langle\mathbf{m}^n,~\mathbf{m}_e^n\rangle,\,0 \leq \theta \leq \pi$, $ \alpha = |\widetilde{\mathbf{m}}^{n}| \geq 1$. A careful application of law of cosines implies that 
    \begin{equation}\label{lem6eq1}
     \setlength{\abovedisplayskip}{4pt}
        |\widetilde{\mathbf{e}}^n|^2 = \alpha^2 + 1 - 2\alpha\cos\theta, \quad 
        |\mathbf{e}^n|^2 = 2 - 2\cos\theta, \quad \alpha\geq 1.
    \setlength{\belowdisplayskip}{4pt}
    \end{equation}
    Taking the difference of \eqref{lem6eq1}, we obtain the left inequality in \eqref{e_l^2}: 
    \begin{equation*}
    \setlength{\abovedisplayskip}{4pt}
        (|\widetilde{\mathbf{e}}^n|^2 - |\mathbf{e}^n|^2)
         = \alpha^2-2(\alpha - 1)\cos \theta - 1\geq (\alpha - 1)^2 
         = |\widetilde{\mathbf{e}} - \mathbf{e}^n|^2 \geq 0 . 
    \setlength{\belowdisplayskip}{4pt}
    \end{equation*}
The right part of \eqref{e_l^2} comes from a direct application of Cauchy inequality. In terms of the gradient estimate, we take node $(i+1/2,j,k)$ as an example, and the same logic is applicable to the other directions.
    \begin{equation}\label{all}
    \setlength{\abovedisplayskip}{4pt}
        \aligned
        &\Big| \nabla_h \Big(\frac{\mathbf{m}^n_e}{|\mathbf{m}^n_e|} -  \frac{\widetilde{\mathbf{m}}^n}{|\widetilde{\mathbf{m}}^n|} \Big) \Big| = \Big| \nabla_h \Big( \mathbf{m}^n_e \Big( \frac{1}{|\mathbf{m}^n_e|} - \frac{1}{|\widetilde{\mathbf{m}}^n|} \Big) \Big) 
        + \nabla_h \frac{\widetilde{\mathbf{e}}^n}{|\widetilde{\mathbf{m}}^n|} \Big| \\
        \le & \Big| \nabla_h \mathbf{m}^n_e\cdot\Pi_h \Big(\frac{1}{|\mathbf{m}^n_e|} - \frac{1}{|\widetilde{\mathbf{m}}^n|} \Big) \Big| + \Big| \Pi_h \mathbf{m}^n_e \cdot \nabla_h \Big( \frac{1}{|\mathbf{m}^n_e|} - \frac{1}{|\widetilde{\mathbf{m}}^n|} \Big) \Big| \\
        & + \Big| \Pi_h\frac{1}{|\widetilde{\mathbf{m}}^n|}\cdot \nabla_h\widetilde{\mathbf{e}}^n \Big| 
        + \Big| \Pi_h\widetilde{\mathbf{e}}^n \cdot \nabla_h\frac{1}{|\widetilde{\mathbf{m}}^n|} \Big| 
        := J_1 + J_2 + J_3 + J_4.
        \endaligned
    \setlength{\belowdisplayskip}{4pt}
    \end{equation}
The terms $J_i$ ($i=1,\cdots,4$) will be separately analyzed. Motivated by an identity for any non-zero numbers $a$ and $b$, $\frac{1}{|a|} - \frac{1}{|b|} = -\frac{(a - b)(a + b)}{|a| \cdot |b|(|a| + |b|)}$, we immediately observe a bound for the first term: 
     \begin{equation}\label{lem4.3eq1}
         \setlength{\abovedisplayskip}{4pt}
         J_1 = \Big| \nabla_h \mathbf{m}^n_e \cdot \Pi_h \frac{\widetilde{\mathbf{e}}^n( \mathbf{m}^n_e + \widetilde{\mathbf{m}}^n)}{|\mathbf{m}^n_e| \cdot |\widetilde{\mathbf{m}}^n|(|\mathbf{m}^n_e| + |\widetilde{\mathbf{m}}^n|)} \Big|
         \leq |\nabla_h \mathbf{m}^n_e| \cdot \Pi_h|\widetilde{\mathbf{e}}^n | ,  \quad 
         \mbox{since} \, \, \, |\mathbf{m}^n_e| = 1, \, \, |\widetilde{\mathbf{m}}^n| \ge 1. 
         \setlength{\belowdisplayskip}{4pt}
     \end{equation}
The term of $J_2$ could be analyzed as follows: 
     \begin{align} 
         \setlength{\abovedisplayskip}{4pt}
         J_2 = & \Big| \Pi_h \mathbf{m}^n_{e,i+1/2}\cdot \Big( \frac{\nabla_h \mathbf{m}^n_{e,i+1/2}\cdot(\mathbf{m}^n_{e,i+1} + \mathbf{m}^n_{e,i})}{|\mathbf{m}^n_{e,i+1}| \cdot |\mathbf{m}^n_{e,i}|(|\mathbf{m}^n_{e,i+1}|+|\mathbf{m}^n_{e,i}|)} - \frac{\nabla_h \widetilde{\mathbf{m}}^n_{i+1/2}\cdot(\widetilde{\mathbf{m}}^n_{i+1} + \widetilde{\mathbf{m}}^n_{i})}{|\widetilde{\mathbf{m}}^n_{i+1}| \cdot |\widetilde{\mathbf{m}}^n_i|(|\widetilde{\mathbf{m}}^n_{i+1}|+|\widetilde{\mathbf{m}}^n_i|)} \Big) \Big| \notag\\
         = & \left|\Pi_h \mathbf{m}_{e,i+1/2}\cdot\frac{|\widetilde{\mathbf{m}}^n_{i+1}| \cdot |\widetilde{\mathbf{m}}^n_i|(|\widetilde{\mathbf{m}}^n_{i+1}|+|\widetilde{\mathbf{m}}^n_i|)\nabla_h \mathbf{m}^n_{e,i+1/2}\cdot(\mathbf{m}^n_{e,i+1} + \mathbf{m}^n_{e,i})}{|\mathbf{m}^n_{e,i+1}| \cdot |\mathbf{m}^n_{e,i}|(|\mathbf{m}^n_{e,i+1}|+|\mathbf{m}^n_{e,i}|)|\widetilde{\mathbf{m}}^n_{i+1}| \cdot |\widetilde{\mathbf{m}}^n_i|(|\widetilde{\mathbf{m}}^n_{i+1}|+|\widetilde{\mathbf{m}}^n_i|)}\right.\label{lem4.3eq3}\\
         & \left. -\Pi_h \mathbf{m}^n_{e,i+1/2}\cdot \frac{|\mathbf{m}^n_{e,i+1}| \cdot |\mathbf{m}^n_{e,i}|(|\mathbf{m}^n_{e,i+1}|+|\mathbf{m}^n_{e,i}|)\nabla_h \widetilde{\mathbf{m}}^n_{i+1/2}\cdot(\widetilde{\mathbf{m}}^n_{i+1} + \widetilde{\mathbf{m}}^n_{i})}{|\mathbf{m}^n_{e,i+1}| \cdot |\mathbf{m}^n_{e,i}|(|\mathbf{m}^n_{e,i+1}|+|\mathbf{m}^n_{e,i}|)|\widetilde{\mathbf{m}}^n_{i+1}| \cdot |\widetilde{\mathbf{m}}^n_i| (|\widetilde{\mathbf{m}}^n_{i+1}|+|\widetilde{\mathbf{m}}^n_i|)}\right|\notag\\
         \le & C ( |\Pi_h \mathbf{m}^n_e| \cdot |\nabla_h \mathbf{m}^n_e|\cdot\Pi_h|\widetilde{\mathbf{e}}^n| 
         + |\Pi_h \mathbf{m}^n_e| \cdot |\nabla_h\widetilde{\mathbf{e}}^n| ) 
         \le C (  |\nabla_h \mathbf{m}^n_e| \cdot \Pi_h|\widetilde{\mathbf{e}}^n| 
         + |\nabla_h\widetilde{\mathbf{e}}^n| ) , \notag 
     \setlength{\belowdisplayskip}{4pt}
     \end{align}
in which the fact that $| \mathbf{m}^n_e| \equiv 1$ has been used. A bound for the third term on the right hand of \eqref{all} is more straightforward: 
     \begin{equation}\label{lem4.3eq4}
         \setlength{\abovedisplayskip}{4pt}
         J_3 \leq |\nabla_h \widetilde{\mathbf{e}}^n| , \quad \mbox{since} \, \, \, 
         |\widetilde{\mathbf{m}}^n| \ge 1. 
        \setlength{\belowdisplayskip}{4pt}
     \end{equation}
The last term on the right hand of \eqref{all} could be bounded by
     \begin{equation}\label{lem4.3eq2}
         \setlength{\abovedisplayskip}{4pt}
         \begin{aligned} 
         J_4 = & \Big| \Pi_h\widetilde{\mathbf{e}}_{i+1/2}^n\cdot \frac{1}{h} \Big(\frac{1}{|\widetilde{\mathbf{m}}^n_{i+1}|} - \frac{1}{|\widetilde{\mathbf{m}}^n_{i}|} \Big) \Big| = \Big| \Pi_h\widetilde{\mathbf{e}}^n_{i+1/2} \cdot \frac{\nabla_h\widetilde{\mathbf{m}}^n_{i+1/2}\cdot(\widetilde{\mathbf{m}}^n_{i+1} + \widetilde{\mathbf{m}}^n_{i})}{|\widetilde{\mathbf{m}}^n_{i+1}| \cdot |\widetilde{\mathbf{m}}^n_i|(|\widetilde{\mathbf{m}}^n_{i+1}| + |\widetilde{\mathbf{m}}^n_i|)} \Big| \\
         \le & \Pi_h|\widetilde{\mathbf{e}}^n|\cdot|\nabla_h \widetilde{\mathbf{m}}^n| 
         \le \Pi_h|\widetilde{\mathbf{e}}^n|\cdot|\nabla_h \widetilde{\mathbf{e}}^n| 
         + \Pi_h|\widetilde{\mathbf{e}}^n|\cdot|\nabla_h \mathbf{m}^n_e|. 
        \end{aligned} 
         \setlength{\belowdisplayskip}{4pt}
     \end{equation}
Finally, with the help of discrete H\"{o}lder inequality, a substitution of \eqref{lem4.3eq1}-\eqref{lem4.3eq2} into \eqref{all} leads to 
     \begin{equation}
        \setlength{\abovedisplayskip}{4pt}
         \|\nabla_h \mathbf{e}^n\|^2_{2}\leq C((1 + \|\widetilde{\mathbf{e}}^n\|_{\infty}^2)\|\nabla_h \widetilde{\mathbf{e}}^n\|^2_{2} + \|\nabla_h\mathbf{m}^n_e\|^2_\infty \cdot \|\widetilde{\mathbf{e}}^n\|^2_2 ) 
         \le M_0^2 (\|\widetilde{\mathbf{e}}^n\|^2_{2} + \|\nabla_h \widetilde{\mathbf{e}}^n\|^2_{2}),
        \setlength{\belowdisplayskip}{4pt}
     \end{equation}
with $M_0$ only dependent on $\Omega$ and \(\|\nabla_h\mathbf{m}^n_e\|^2_\infty\). This completes the proof of Lemma~\ref{lem6}. 
    \end{proof}
    \medskip
    \begin{remark}
In the existing works of An et al. \cite{an2021optimal} and Gui et al. \cite{gui2022convergence}, an estimate between the intermediate numerical error \(\widetilde{\mathbf{e}}^n\) and the projected numerical error \(\mathbf{e}^n\) is established as 
        \begin{equation} 
          \setlength{\abovedisplayskip}{4pt}
            \|\mathbf{e}^n\|^2_2 \leq \|\widetilde{\mathbf{e}}^n\|^2_2 + \text{higher-order terms}. 
            \label{error projection-existing-1} 
          \setlength{\belowdisplayskip}{4pt}
        \end{equation}
However, these higher order terms are highly nonlinear, and a precise control of them turns out to be a challenging issue. In comparison, the projected error estimate \eqref{e_l^2} in our work is much sharper, which comes from a point-wise lower bound \(|\widetilde{\mathbf{m}}^n| \geq 1\) for the constructed scheme \eqref{discretescheme}. 
    \end{remark}

Now we present the main result in the following theorem for the fully discrete scheme \eqref{discretescheme}.
\medskip
\begin{theorem}\label{thm1error}
    Assume that the exact solution $\mathbf{m}_e(\mathbf{x},t)$ to the LLG equation \eqref{e_original model} satisfies $\mathbf{m}_e(\mathbf{x},t) \in H^2(0,\,T;\, C (\Omega)) \bigcap \textcolor{black}{C^1 (0,\,T;\, \mathbf{H}^2 (\Omega))} \bigcap L^\infty (0,\,T;\,\mathbf{H}^4(\Omega))$. For the fully discrete scheme \eqref{discretescheme}, if there exists a positive constant $0 <\epsilon_0<1$, such that
    \begin{equation}\label{CFLcondition}
    \setlength{\abovedisplayskip}{4pt}
    \left\{
    \begin{array}{ll}
        \textcolor{black}{h^2\lesssim \Delta t \lesssim h^{\epsilon_0}},&\text{in the 2D case},  \\
         h^2\lesssim \Delta t \lesssim h^{1+\epsilon_0},& \text{in the 3D case},
    \end{array} \right.  
    \setlength{\belowdisplayskip}{4pt}
    \end{equation}
then an optimal rate error estimate is available: 
    \begin{equation} 
    \setlength{\abovedisplayskip}{4pt}
        \|\mathbf{e}^n\|^2_{2} + \Delta t \sum^{n}_{k=1}\|\nabla_h \mathbf{e}^{k}\|^2_{2} \leq \hat{C}_0(h^4 + \Delta t^2) ,  \quad \mbox{for} \, \, 1 \le n \le N ,  \label{convergence-0} 
    \setlength{\belowdisplayskip}{4pt}
    \end{equation}
provided that $\Delta t$ and $h$ are sufficiently small, where $\hat{C}_0 > 0$ is independent of $h$ and $\Delta t$.
\end{theorem}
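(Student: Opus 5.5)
The proof will be an induction on $n$ with an a priori assumption. Take $\mathbf{v} = \widetilde{\mathbf{e}}^n$ in the weak error equation \eqref{merror}, absorb the nonlinear terms using the dissipation $\gamma\|\nabla_h\widetilde{\mathbf{e}}^n\|_2^2$, and close with the discrete Gronwall inequality (Lemma~\ref{lem: gronwall2}).

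\textbf{Induction hypothesis and $\ell^\infty$ control.} Assume that for all $k\le n-1$
\[
\|\mathbf{e}^k\|_2^2+\Delta t\sum_{j\le k}\|\nabla_h\mathbf{e}^j\|_2^2\le \hat C_0(h^4+\Delta t^2).
\]
Several quantities have to be controlled from this. First, $\|\mathbf{e}^{n-1}\|_\infty\le 2$ holds trivially, since both $|\mathbf{m}^{n-1}|$ and $|\mathbf{m}_e^{n-1}|$ equal one. Second, by Theorem~\ref{discretethm1}, $\|\nabla_h\mathbf{m}^{n-1}\|_2\le C$ unconditionally. Third, we need $\|\nabla_h\mathbf{e}^{n-1}\|_{p_0}$ bounded; this follows from the inverse inequality \eqref{inverse-1} applied to $\Delta t\|\nabla_h \mathbf{e}^{n-1}\|_2^2\le \hat C_0(h^4+\Delta t^2)$:
\[
\|\nabla_h\mathbf{e}^{n-1}\|_{p_0}\le Ch^{-3(\frac12-\frac1{p_0})}\,\Delta t^{-1/2}(h^2+\Delta t).
\]
This is $O(1)$ precisely under \eqref{CFLcondition}: in 3D, $\Delta t\lesssim h^{1+\epsilon_0}$ together with $h^2\lesssim\Delta t$ does it. The 2D case is analogous with exponent $2$ in place of $3$, which explains the weaker constraint there.

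\textbf{Energy estimate.} With $\mathbf{v}=\widetilde{\mathbf{e}}^n$, the left side produces
\[
\frac{1}{2\Delta t}\bigl(\|\widetilde{\mathbf{e}}^n\|_2^2-\|\mathbf{e}^{n-1}\|_2^2+\|\widetilde{\mathbf{e}}^n-\mathbf{e}^{n-1}\|_2^2\bigr)+\gamma\|\nabla_h\widetilde{\mathbf{e}}^n\|_2^2 .
\]
The right-hand terms are handled as follows.
\begin{itemize}
\item The first $\beta$ term, $\langle\nabla_h\widetilde{\mathbf{e}}^n,\nabla_h\widetilde{\mathbf{e}}^n\times\Pi_h\mathbf{m}^{n-1}\rangle$, vanishes identically.
\item The second $\beta$ term, $\langle\nabla_h\widetilde{\mathbf{e}}^n,\Pi_h\widetilde{\mathbf{e}}^n\times\nabla_h\mathbf{m}^{n-1}\rangle$, is bounded via $\|\nabla_h\mathbf{m}^{n-1}\|_{p_0}\le \|\nabla_h\mathbf{m}_e^{n-1}\|_\infty+\|\nabla_h\mathbf{e}^{n-1}\|_{p_0}\le C$. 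Then Hölder with $1/p_0+1/q_0=1/2$, followed by the interpolation inequality \eqref{interpolation-1} for $\|\widetilde{\mathbf{e}}^n\|_{q_0}$ and Young's inequality, gives
\[
\varepsilon\|\nabla_h\widetilde{\mathbf{e}}^n\|_2^2+C\|\widetilde{\mathbf{e}}^n\|_2^2 .
\]
Because $q_0<6$, the exponent on $\|\widetilde{\mathbf{e}}^n\|_{H^1_h}$ is strictly below one, which is what makes Young's inequality applicable.
\item The $\gamma$ terms in $\nabla_h\widetilde{\mathbf{e}}^n:\nabla_h\mathbf{m}^{n-1}$ and in $(\widetilde{\mathbf{e}}^n-\mathbf{e}^{n-1})\cdot\nabla_h\mathbf{m}^{n-1}$ are treated the same way, using $|\mathbf{m}^{n-1}|=1$. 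The $\nabla_h$ in $\nabla_h(\mathbf{m}^{n-1}\cdot\widetilde{\mathbf{e}}^n)$ is expanded by \eqref{product gradient-1}.
\item Terms carrying $\nabla_h\mathbf{e}^{n-1}$ against smooth factors, namely $\nabla_h\mathbf{m}_e$ or $\mathbf{m}_e^n-\mathbf{m}_e^{n-1}=O(\Delta t)$, are bounded by $\varepsilon\|\nabla_h\widetilde{\mathbf{e}}^n\|_2^2+C\|\mathbf{e}^{n-1}\|_2^2$ plus a contribution $C\Delta t^2\|\nabla_h\mathbf{e}^{n-1}\|_2^2$. Where needed they are moved by summation by parts onto $\widetilde{\mathbf{e}}^n$.
\item The term $\langle\mathbf{R}^n,\widetilde{\mathbf{e}}^n\rangle$ is bounded by Cauchy--Schwarz.
\end{itemize}

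\textbf{Closing the recursion.} Lemma~\ref{lem6}(i) gives $\|\mathbf{e}^n\|_2^2\le\|\widetilde{\mathbf{e}}^n\|_2^2$, so the left side controls $\|\mathbf{e}^n\|_2^2$ and the recursion closes in $\mathbf{e}$. Lemma~\ref{lem6}(ii) turns $\Delta t\sum\|\nabla_h\widetilde{\mathbf{e}}^k\|_2^2$ into the stated bound on $\Delta t\sum\|\nabla_h\mathbf{e}^k\|_2^2$, and this also re-establishes the a priori assumption at level $n$. That lemma requires $\|\widetilde{\mathbf{e}}^n\|_\infty\le1$. I would obtain it from the inverse inequality, $\|\widetilde{\mathbf{e}}^n\|_\infty\le Ch^{-3/2}\|\widetilde{\mathbf{e}}^n\|_2\le Ch^{-3/2}(h^2+\Delta t)$, which is small under \eqref{CFLcondition} (in 3D $\Delta t\lesssim h^{1+\epsilon_0}$ gives $h^{-1/2+\epsilon_0}$). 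If that is insufficient, I would instead combine the $\ell^\infty$ bound with the $H^1_h$ control through a discrete Gagliardo–Nirenberg inequality.

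\textbf{Main obstacle.} The term $\langle\nabla_h\mathbf{e}^{n-1},\cdots\rangle$ is the delicate one, because of the $\nabla_h\mathbf{e}^{n-1}$ factor appearing in $\nabla_h\mathbf{m}_e^n:\nabla_h\mathbf{e}^{n-1}$ tested against $\Pi_h(\mathbf{m}^{n-1}\cdot\widetilde{\mathbf{e}}^n)$, and also in the $\beta$ term with $\nabla_h\mathbf{e}^{n-1}$. Bounding it directly gives $C\|\nabla_h\mathbf{e}^{n-1}\|_2\|\widetilde{\mathbf{e}}^n\|_2$. Summed over $k$, this is controlled by $\varepsilon\Delta t\sum\|\nabla_h\mathbf{e}^{k-1}\|_2^2$, which via Lemma~\ref{lem6}(ii) is $\varepsilon M_0^2\Delta t\sum\|\nabla_h\widetilde{\mathbf{e}}^{k-1}\|_2^2$. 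That sum can be absorbed into the previous steps' diffusion provided $\varepsilon M_0^2<\gamma/2$. Accordingly, I would choose $\varepsilon$ this way and use the $c_k$ telescoping slot of Lemma~\ref{lem: gronwall2}, with $c_k \propto \|\nabla_h\widetilde{\mathbf{e}}^k\|_2^2$, to handle this lagged gradient term. Gronwall then yields \eqref{convergence-0}.
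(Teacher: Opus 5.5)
Your architecture is the paper's. You test \eqref{merror} with $\widetilde{\mathbf e}^n$, apply H\"older with the pair $(p_0,q_0)$ plus the interpolation inequality, and absorb the lagged $\|\nabla_h\mathbf e^{n-1}\|_2^2$ into the previous step's dissipation through Lemma~\ref{lem6}(ii); this is the paper's $-\frac{\gamma}{8}\|\nabla_h\widetilde{\mathbf e}^{n-1}\|_2^2$ telescoping. You then return to $\mathbf e^n$ with Lemma~\ref{lem6}(i) and close with Gronwall. One small divergence is fine. You get $\|\nabla_h\mathbf m^{n-1}\|_{p_0}\le C$ directly from one term of the inductive bound on $\Delta t\sum_j\|\nabla_h\mathbf e^j\|_2^2$ plus the inverse inequality, which gives $C\hat C_0^{1/2}h^{\epsilon_0/4}$ in 3D. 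The paper instead goes through an a priori bound on $\widetilde{\mathbf e}^{n-1}$ and Lemma~\ref{lem6}(ii).

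The step that fails as written is the verification of $\|\widetilde{\mathbf e}^k\|_\infty\le 1$, which Lemma~\ref{lem6}(ii) requires. You need it at level $n-1$ inside the step-$n$ estimate, to trade $\|\nabla_h\mathbf e^{n-1}\|_2^2$ for $\|\nabla_h\widetilde{\mathbf e}^{n-1}\|_2^2$, not only at the end.

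\begin{itemize}
\item Your primary bound $Ch^{-3/2}(h^2+\Delta t)\lesssim h^{-1/2+\epsilon_0}$ is not small for $\epsilon_0\le\frac12$. In 2D the analogous $h^{-1}\Delta t\lesssim h^{-1+\epsilon_0}$ is never small.
\item Your induction hypothesis involves only $\mathbf e^k$, which says nothing about $\widetilde{\mathbf e}^{n-1}$: the stretching $|\widetilde{\mathbf m}^{n-1}|-1$ is invisible to $\mathbf e^{n-1}$.
\end{itemize}

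The remedy is the one you gesture at, made precise as in the paper's a priori assumption \eqref{a priori-1}--\eqref{a priori-2}. Carry single-step bounds on $\widetilde{\mathbf e}^k$ through the induction:
\begin{itemize}
\item $\|\widetilde{\mathbf e}^k\|_2\lesssim h^2+\Delta t$, obtained by keeping $\sum_k\|\widetilde{\mathbf e}^k-\mathbf e^k\|_2^2$ on the left via the sharp form of Lemma~\ref{lem6}(i);
\item $\|\nabla_h\widetilde{\mathbf e}^k\|_2\lesssim\Delta t^{-1/2}(h^2+\Delta t)\lesssim\Delta t^{1/2}$, read off from one term of $\Delta t\sum_j\|\nabla_h\widetilde{\mathbf e}^j\|_2^2$.
\end{itemize}
In 3D this gives $\|\widetilde{\mathbf e}^k\|_\infty\le Ch^{-1/2}\|\widetilde{\mathbf e}^k\|_6\le Ch^{-1/2}\|\widetilde{\mathbf e}^k\|_{H^1_h}\lesssim h^{\epsilon_0/2}$. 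In 2D, use $\|\cdot\|_\infty\le Ch^{-2/q}\|\cdot\|_q$ together with the embedding $\|\cdot\|_q\le C_q\|\cdot\|_{H^1_h}$ and $q>4/\epsilon_0$, which gives $h^{-2/q+\epsilon_0/2}\to 0$.

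The paper states the hypothesis with slightly degraded exponents, $\Delta t^{1-\epsilon_0/8}+h^{2-\epsilon_0/4}$ and $\Delta t^{1/2-\epsilon_0/8}+h^{1-\epsilon_0/4}$, so that it can be recovered at level $n$ without tracking the Gronwall constant. With this strengthened induction your argument closes.
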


\begin{proof}
The following discrete bounds are available for the exact solution $\mathbf{m}_e$, due to its regularity: 
\begin{equation}
 \setlength{\abovedisplayskip}{3pt}
 \| \nabla_h \mathbf{m}_e^k \|_{p_0} , \, \,  \Big\| \frac{ \mathbf{m}_e^{k+1} - \mathbf{m}_e^k }{\Delta t} \Big\|_{H^1_h} \le C^* , \quad \forall k \ge 0   
 \setlength{\belowdisplayskip}{3pt}
  \label{exact-inf-1} 
\end{equation}
and we make the following a-priori assumption for the numerical error function at the previous time step:
\begin{equation}\label{a priori-1} 
  \setlength{\abovedisplayskip}{3pt} 
\| \widetilde{\mathbf{e}}^{n-1} \|_2  
  \le \Delta t^{1 - \frac{\epsilon_0}{8}} + h^{2 - \frac{\epsilon_0}{4}} , \quad   
 \| \nabla_h \widetilde{\mathbf{e}}^{n-1} \|_2  \le \Delta t^{\frac12 - \frac{\epsilon_0}{8}} 
   + h^{1 - \frac{\epsilon_0}{4}} . 
   \setlength{\belowdisplayskip}{3pt}
\end{equation}
Such an a-prior assumption is valid at $n=0$, and it will be recovered by the optimal rate convergence analysis at the next time step, as will be proved later. In turn, a discrete $\ell^\infty$ bound for the numerical error function at the intermediate stage becomes available: 
\begin{equation}\label{a priori-2} 
  \setlength{\abovedisplayskip}{3pt}
\begin{aligned} 
   \| \widetilde{\mathbf{e}}^{n-1} \|_\infty \le \frac{C ( \| \widetilde{\mathbf{e}}^{n-1} \|_2 
   + \| \nabla_h \widetilde{\mathbf{e}}^{n-1} \|_2 ) }{h^\frac12} \le C ( \Delta t^{\frac12 - \frac{\epsilon_0}{8}} h^{-\frac12} + h^{\frac12 - \frac{\epsilon_0}{4}} )   \le C_1 h^{\frac{\epsilon_0}{3}} \le 1 , 
\end{aligned} 
  \setlength{\belowdisplayskip}{3pt}
\end{equation} 
in which the scaling law that {\color{black} $\Delta t^{\frac12 - \frac{\epsilon_0}{8}} \lesssim h^{(1 + \epsilon_0) ( \frac12 - \frac{\epsilon_0}{8} )} \lesssim h^{\frac12 + \frac{\epsilon_0}{3}}$} has been applied, provided that $\epsilon_0$ is sufficiently small. 
Under such an error bound, an application of the preliminary estimate~\eqref{e_lh1} gives 
\begin{equation} 
  \setlength{\abovedisplayskip}{4pt} 
\begin{aligned} 
  \|\nabla_h \mathbf{e}^{n-1} \|_2 \le & M_0 (\|\widetilde{\mathbf{e}}^{n-1} \|_2 
            + \|\nabla_h \widetilde{\mathbf{e}}^{n-1} \|_2 ) 
            \le (M_0 + 1/2) ( \Delta t^{\frac12 - \frac{\epsilon_0}{8}}   
   + h^{1 - \frac{\epsilon_0}{4}} ) 
\\
   \le & 
     (M_0 + 1/2) ( h^{\frac12 + \frac{\epsilon_0}{3}}  
   + h^{1 - \frac{\epsilon_0}{4}} ) \le (M_0 + 1)  h^{\frac12 + \frac{\epsilon_0}{3}} . 
\end{aligned} 
  \setlength{\belowdisplayskip}{4pt} 
   \label{a priori-3}
\end{equation} 
Based on the fact that $\frac32 - \frac{3}{p_0} = \frac12 + \frac{\epsilon_0}{4}$, an application of inverse inequality implies that 
\begin{equation} 
  \setlength{\abovedisplayskip}{4pt}  
  \|\nabla_h \mathbf{e}^{n-1} \|_{p_0} \le C h^{-(\frac12 +  \frac{\epsilon_0}{4})} 
  \cdot \|\nabla_h \mathbf{e}^{n-1} \|_2
  \le C_2 (M_0 + 1)  h^{\epsilon_0 /12} \le 1/2 . 
  \setlength{\belowdisplayskip}{4pt} 
   \label{a priori-4}
\end{equation} 
  Subsequently, 
  a discrete $W_h^{1, p_0}$ bound for the numerical solution is valid at the previous time step: 
\begin{equation} \label{a priori-5}
  \setlength{\abovedisplayskip}{4pt}  
    \|\nabla_h \mathbf{m}^{n-1} \|_{p_0} \le \|\nabla_h \mathbf{m}_e^{n-1} \|_{p_0}  
    + \|\nabla_h \mathbf{e}^{n-1} \|_{p_0} \le C^* + 1/2 = \tilde{C}_1. 
  \setlength{\belowdisplayskip}{4pt} 
\end{equation}
Next, we proceed with the detailed error estimates. 
Taking $\mathbf{v} = \widetilde{\mathbf{e}}^n$ in \eqref{merror} gives 
    \begin{equation}\label{errorall}
    \setlength{\abovedisplayskip}{4pt}  
        \frac{1}{2 \Delta t} ( \|\widetilde{\mathbf{e}}^n\|_2^2 - \|\mathbf{e}^{n-1}\|_2^2 + \|\widetilde{\mathbf{e}}^n - \mathbf{e}^{n-1}\|^2_2 ) + \gamma \|\nabla_h\widetilde{\mathbf{e}}^n\|^2_2 
        = \sum^9_{i=1}I_i + \langle \mathbf{R}^n, \widetilde{\mathbf{e}}^n \rangle . 
    \setlength{\belowdisplayskip}{4pt}    
    \end{equation}
The first term on the right-hand side of \eqref{errorall} vanishes, since the two vectors are orthogonal: 
    \begin{equation}\label{I_1} 
     \setlength{\abovedisplayskip}{4pt} 
        I_1 = \beta \langle \nabla_h \widetilde{\mathbf{e}}^n, 
        \nabla_h\widetilde{\mathbf{e}}^n\times \Pi_h\mathbf{m}^{n-1} \rangle = 0. 
      \setlength{\belowdisplayskip}{4pt} 
    \end{equation}
With the help of the a-priori bound \eqref{a priori-5} for the numerical solution, an application of discrete H\"{o}lder inequality gives a bound for the second term:  
    \begin{equation}\label{I_2}
     \setlength{\abovedisplayskip}{4pt} 
        I_2 = \beta \langle \nabla_h \widetilde{\mathbf{e}}^n,\Pi_h\widetilde{\mathbf{e}}^n \times \nabla_h \mathbf{m}^{n-1} \rangle \leq \beta \|\nabla_h\widetilde{\mathbf{e}}^n\|_2 \cdot \|\widetilde{\mathbf{e}}^n\|_{q_0} \cdot \|\nabla_h\mathbf{m}^{n-1}\|_{p_0} \leq \beta\tilde{C}_1\|\nabla_h\widetilde{\mathbf{e}}^n\|_2 \cdot \|\widetilde{\mathbf{e}}^n\|_{q_0}. 
     \setlength{\belowdisplayskip}{4pt} 
    \end{equation}
The third and forth terms could be controlled with the help of the bound~\eqref{exact-inf-1} for the exact solution: 
    \begin{align}\label{I_34}
        \setlength{\abovedisplayskip}{4pt} 
        I_3 = \beta \langle \nabla_h \mathbf{m}^n_e,\nabla_h \widetilde{\mathbf{e}}^n \times \Pi_h\mathbf{e}^{n-1} \rangle \leq \beta \|\nabla_h\widetilde{\mathbf{e}}^n\|_2 \cdot \|\nabla_h \mathbf{m}^n_e \cdot \|_{p_0}\|\mathbf{e}^{n-1}\|_{q_0} \leq \beta C^*\|\nabla_h\widetilde{\mathbf{e}}^n\|_2 \cdot \|\mathbf{e}^{n-1}\|_{q_0},\\
        I_4 = \beta \langle \nabla_h \mathbf{m}_e^n, \Pi_h\widetilde{\mathbf{e}}^n \times \nabla_h \mathbf{e}^{n-1} \rangle \leq \beta\|\nabla_h \mathbf{m}^n_e\|_{p_0} \cdot \|\widetilde{\mathbf{e}}^{n}\|_{q_0} \cdot \|\nabla_h\mathbf{e}^{n-1}\|_2\leq \beta C^*\|\widetilde{\mathbf{e}}^{n}\|_{q_0} \cdot \|\nabla_h\mathbf{e}^{n-1}\|_2. 
     \setlength{\belowdisplayskip}{4pt} 
    \end{align}
Regarding the fifth and sixth terms on the right-hand side of \eqref{errorall}, we see that 
    \begin{align}
     \setlength{\abovedisplayskip}{4pt} 
        I_5 &= \gamma  \langle \nabla_h \mathbf{m}_e^n : \nabla_h \mathbf{m}_e^{n-1} , \Pi_h(\mathbf{e}^{n-1} \cdot\widetilde{\mathbf{e}}^n )  \rangle \notag\\
        &\leq \gamma\|\nabla_h \mathbf{m}^n_e\|_{p_0}\|\nabla_h\mathbf{m}^{n-1}\|_{p_0}\|\mathbf{e}^{n-1}\|_{q_0}\|\widetilde{\mathbf{e}}^n\|_{q_0} \leq \gamma(C^*)^2\|\mathbf{e}^{n-1}\|_{q_0}\|\widetilde{\mathbf{e}}^n\|_{q_0},\label{I_5}\\
        I_6 &= \gamma \langle \nabla_h\widetilde{\mathbf{e}}^{n}: \nabla_h\mathbf{m}^{n-1} 
    + \nabla_h \mathbf{m}_e^{n}: \nabla_h\mathbf{e}^{n-1} , 
    \Pi_h(\mathbf{m}^{n-1} \cdot\widetilde{\mathbf{e}}^n ) \rangle \notag\\
    &\leq \gamma(\|\nabla_h\widetilde{\mathbf{e}}^n\|_{2} \cdot \|\nabla_h\mathbf{m}^{n-1}\|_{p_0} 
    + \|\nabla_h\mathbf{m}^n_e\|_{p_0} \cdot \|\nabla_h \mathbf{e}^{n-1}\|_{2}) 
    \|\mathbf{m}^{n-1}\|_{\infty} \cdot \|\widetilde{\mathbf{e}}^n\|_{q_0}\\
    &\leq\gamma(\tilde{C}_1\|\nabla_h\widetilde{\mathbf{e}}^{n}\|_{2} + C^*\|\nabla_h\mathbf{e}^{n-1}\|_{2})\|\widetilde{\mathbf{e}}^n\|_{q_0}.\notag
    \setlength{\belowdisplayskip}{4pt} 
    \end{align}
Similarly, with the help of \eqref{exact-inf-1} and \eqref{a priori-5}, the seventh and eighth terms can be bounded by
    \begin{align}
    \setlength{\abovedisplayskip}{4pt} 
        I_7 = & - \gamma \langle ( \widetilde{\mathbf{e}}^n - \mathbf{e}^{n-1} ) \cdot \nabla_h \mathbf{m}^{n-1} , \nabla_h ( \mathbf{m}^{n-1} \cdot \widetilde{\mathbf{e}}^n ) \rangle \notag\\
        \le & \gamma\|\widetilde{\mathbf{e}}^n - \mathbf{e}^{n-1}\|_{q_0} \cdot \|\nabla_h\mathbf{m}^{n-1}\|_{p_0}(\|\nabla_h\mathbf{m}^{n-1}\|_{p_0} \cdot \|\widetilde{\mathbf{e}}^n\|_{q_0} + \|\mathbf{m}^{n-1}\|_{\infty} \cdot \|\nabla_h\widetilde{\mathbf{e}}^{n}\|_{2})\\
        \le &  \gamma \widetilde{C}_1(\|\widetilde{\mathbf{e}}^n\|_{q_0} + \|\mathbf{e}^{n-1}\|_{q_0})(\widetilde{C}_1\|\widetilde{\mathbf{e}}^n\|_{q_0} +\|\nabla_h\widetilde{\mathbf{e}}^n\|_2),\notag\\
        I_8 = & - \gamma \langle ( \mathbf{m}_e^n - \mathbf{m}_e^{n-1} ) \cdot \nabla_h \mathbf{e}^{n-1} , 
     \nabla_h ( \mathbf{m}_e^{n-1} \cdot \widetilde{\mathbf{e}}^n ) \rangle \notag 
\\
  \le & 
  \gamma \| \mathbf{m}_e^n - \mathbf{m}_e^{n-1} \|_\infty \cdot \| \nabla_h \mathbf{e}^{n-1} \|_2  
     ( \| \nabla_h \mathbf{m}_e^{n-1} \|_\infty \cdot \| \widetilde{\mathbf{e}}^n \|_2 
     + \| \mathbf{m}_e^{n-1} \|_\infty \cdot \| \nabla_h \widetilde{\mathbf{e}}^n \|_2  )  
     \label{convergence-NLE-8} 
\\
  \le & 
  \gamma ( C^*)^2 \Delta t  \| \nabla_h \mathbf{e}^{n-1} \|_2  
     (  \| \widetilde{\mathbf{e}}^n \|_2 + \| \nabla_h \widetilde{\mathbf{e}}^n \|_2  )  
     \le  \gamma \| \nabla_h \mathbf{e}^{n-1} \|_2  
     (  \| \widetilde{\mathbf{e}}^n \|_2 + \| \nabla_h \widetilde{\mathbf{e}}^n \|_2  ) / (16 M_0) . \notag
    \setlength{\belowdisplayskip}{4pt} 
    \end{align}
Moreover, an application of Cauchy-Schwarz inequality to the last two terms results in 
    \begin{align}
    \setlength{\abovedisplayskip}{4pt} 
       I_9 = & - \gamma \langle ( \mathbf{m}_e^n - \mathbf{m}_e^{n-1} ) \cdot \nabla_h \mathbf{m}^{n-1} , 
  \nabla_h ( \mathbf{e}^{n-1} \cdot \widetilde{\mathbf{e}}^n ) \rangle  \notag 
\\
  \le & \gamma \| \mathbf{m}_e^n - \mathbf{m}_e^{n-1} \|_\infty 
  \cdot  \| \nabla_h \mathbf{m}^{n-1} \|_{p_0}  
  \cdot ( \| \nabla_h \mathbf{e}^{n-1} \|_2 \cdot \| \widetilde{\mathbf{e}}^n  \|_{q_0} 
  + \| \mathbf{e}^{n-1} \|_{q_0} \cdot \| \nabla_h \widetilde{\mathbf{e}}^n  \|_2 )  \notag 
\\
  \le & 
  \gamma C^* \tilde{C}_1 \Delta t 
   ( \| \nabla_h \mathbf{e}^{n-1} \|_2 \cdot \| \widetilde{\mathbf{e}}^n  \|_{q_0} 
  + \| \mathbf{e}^{n-1} \|_{q_0} \cdot \| \nabla_h \widetilde{\mathbf{e}}^n  \|_2 ) , 
        \label{I9} \\
        \langle \mathbf{R}^n, \widetilde{\mathbf{e}}^n \rangle 
    \le & \| \mathbf{R}^n \|_2 \cdot \| \widetilde{\mathbf{e}}^n \|_2 
    \le \frac12 ( \| \mathbf{R}^n \|_2^2 + \| \widetilde{\mathbf{e}}^n \|_2^2 ) ,  \label{R}  
    \setlength{\belowdisplayskip}{4pt} 
    \end{align}
in which the bound~\eqref{exact-inf-1} has been applied in the derivation of~\eqref{I9}. In turn, a substitution of \eqref{I_1}-\eqref{R} into \eqref{errorall} leads to 
\begin{equation} 
  \setlength{\abovedisplayskip}{4pt}    
  \begin{aligned} 
    & 
    \frac{1}{2 \Delta t} ( \|\widetilde{\mathbf{e}}^n\|_2^2 - \|\mathbf{e}^{n-1}\|_2^2 + \|\widetilde{\mathbf{e}}^n - \mathbf{e}^{n-1}\|^2_2 ) + \gamma \|\nabla_h\widetilde{\mathbf{e}}^n\|^2_2 
\\
  \le & 
   \tilde{C}_2  \| \mathbf{e}^{n-1} \|_2^2 + ( \tilde{C}_2  + \frac12 ) \| \widetilde{\mathbf{e}}^n \|_2^2   
  + \tilde{C}_3 \Big( \frac32 \| \widetilde{\mathbf{e}}^n \|_{q_0}^2  
  + \frac12 \| \mathbf{e}^{n-1} \|_{q_0}^2 \Big)    
  + \frac{\gamma}{16 M_0} \| \nabla_h \widetilde{\mathbf{e}}^n \|_2  
  \cdot \| \nabla_h \mathbf{e}^{n-1} \|_2   
\\
  & 
  + \| \nabla_h \mathbf{e}^{n-1} \|_2 ( ( \beta C^* + 1) \| \widetilde{\mathbf{e}}^n \|_2 
  + \tilde{C}_4 \| \widetilde{\mathbf{e}}^n \|_{q_0}  + \tilde{C}_3 \| \mathbf{e}^{n-1} \|_{q_0} ) 
\\
 & 
  + \| \nabla_h \widetilde{\mathbf{e}}^n \|_2 ( \beta C^* \| \mathbf{e}^{n-1} \|_2 
  + \tilde{C}_5 \| \widetilde{\mathbf{e}}^n \|_{q_0}  + ( 1 + \tilde{C}_3 ) \| \mathbf{e}^{n-1} \|_{q_0} ) 
   + \frac12  \| \mathbf{R}^n \|_2^2  , 
  \setlength{\belowdisplayskip}{4pt}  
\end{aligned} 
    \label{convergence-3-2} 
\end{equation} 
with $\tilde{C}_2 = \frac{\gamma (C^*)^2}{2}$, $\tilde{C}_3 = \gamma \tilde{C}_1 (\tilde{C}_1 + 1)$, $\tilde{C}_4 = \gamma \tilde{C}_1 + \tilde{C}_3$, $\tilde{C}_5 = (\beta + \gamma ) \tilde{C}_1 + \tilde{C}_3$. Moreover, a repeated application of Cauchy inequality gives 
\begin{align} 
  \setlength{\abovedisplayskip}{4pt}  
    & 
   \frac{\gamma}{16 M_0} \| \nabla_h \widetilde{\mathbf{e}}^n \|_2  
  \cdot \| \nabla_h \mathbf{e}^{n-1} \|_2 
  \le \frac{\gamma}{16} \| \nabla_h \widetilde{\mathbf{e}}^n \|_2^2  
  + \frac{\gamma}{64 M_0^2} \| \nabla_h \mathbf{e}^{n-1} \|_2^2 , 
    \label{convergence-3-3} 
\\
  & 
  \| \nabla_h \mathbf{e}^{n-1} \|_2 ( ( \beta C^* + 1) \| \widetilde{\mathbf{e}}^n \|_2 
  + \tilde{C}_4 \| \widetilde{\mathbf{e}}^n \|_{q_0}  + \tilde{C}_3 \| \mathbf{e}^{n-1} \|_{q_0} )  \notag 
\\
  \le & 
  \frac{\gamma}{64 M_0^2} \| \nabla_h \mathbf{e}^{n-1} \|_2^2 
  + 16 M_0^2 \gamma^{-1}( ( \beta C^* + 1) \| \widetilde{\mathbf{e}}^n \|_2 
  + \tilde{C}_4 \| \widetilde{\mathbf{e}}^n \|_{q_0}  + \tilde{C}_3 \| \mathbf{e}^{n-1} \|_{q_0} )^2  \notag 
\\
  \le & 
  \frac{\gamma}{64 M_0^2} \| \nabla_h \mathbf{e}^{n-1} \|_2^2 
  + 48 M_0^2 \gamma^{-1}( ( \beta C^* + 1)^2 \| \widetilde{\mathbf{e}}^n \|_2^2 
  + \tilde{C}_4^2 \| \widetilde{\mathbf{e}}^n \|_{q_0}^2  + \tilde{C}_3^2 \| \mathbf{e}^{n-1} \|_{q_0}^2 ) , 
   \label{convergence-3-4} 
\\
  & 
  \| \nabla_h \widetilde{\mathbf{e}}^n \|_2 ( \beta C^* \| \mathbf{e}^{n-1} \|_2 
  + \tilde{C}_5 \| \widetilde{\mathbf{e}}^n \|_{q_0}  + ( 1 + \tilde{C}_3 ) \| \mathbf{e}^{n-1} \|_{q_0} ) \notag 
\\
  \le & 
  \frac{\gamma}{16} \| \nabla_h \widetilde{\mathbf{e}}^n \|_2^2 
  + 12 \gamma^{-1}  ( \beta^2 ( C^*)^2 \| \mathbf{e}^{n-1} \|_2^2  
  + \tilde{C}_5^2 \| \widetilde{\mathbf{e}}^n \|_{q_0}^2  
  + ( 1 + \tilde{C}_3 )^2 \| \mathbf{e}^{n-1} \|_{q_0}^2 )  .  
  \label{convergence-3-5} 
  \setlength{\belowdisplayskip}{4pt}  
\end{align} 
 Going back \eqref{convergence-3-2}, we get  
\begin{equation} 
  \setlength{\abovedisplayskip}{4pt}    
  \begin{aligned} 
    & 
    \frac{1}{2 \Delta t} ( \|\widetilde{\mathbf{e}}^n\|_2^2 - \|\mathbf{e}^{n-1}\|_2^2 + \|\widetilde{\mathbf{e}}^n - \mathbf{e}^{n-1}\|^2_2 ) + \frac{7 \gamma}{8} \|\nabla_h\widetilde{\mathbf{e}}^n \|^2_2 
    - \frac{\gamma}{32 M_0^2} \|\nabla_h \mathbf{e}^{n-1} \|^2_2 
\\
  \le & 
  \tilde{C}_5  \| \mathbf{e}^{n-1} \|_2^2 + \tilde{C}_6 \| \widetilde{\mathbf{e}}^n \|_2^2   
  + \tilde{C}_7  \| \widetilde{\mathbf{e}}^n \|_{q_0}^2  
  + \tilde{C}_8 \| \mathbf{e}^{n-1} \|_{q_0}^2 + \frac12  \| \mathbf{R}^n \|_2^2 , 
  \setlength{\belowdisplayskip}{4pt}  
\end{aligned} 
    \label{convergence-3-6} 
\end{equation}   
with $\tilde{C}_5 = \tilde{C}_2 + 12 \gamma^{-1}  \beta^2 ( C^*)^2$, $\tilde{C}_6 = \tilde{C}_2 + \frac12 + 48 M_0^2 \gamma^{-1} ( \beta C^* + 1)^2$, $\tilde{C}_7 = \frac32 \tilde{C}_3 + 48 M_0^2 \gamma^{-1} \tilde{C}_4^2 + 12 \gamma^{-1} \tilde{C}_5^2$, $\tilde{C}_8 = \frac12 \tilde{C}_3 + 48 M_0^2 \gamma^{-1} \tilde{C}_3^2 + 12 \gamma^{-1} ( 1+ \tilde{C}_3 )^2$. Meanwhile, with $2 < q_0 = \frac{6}{1 + \frac12 \epsilon_0} < 6$, an application of the interpolation inequality~\eqref{interpolation-1} implies that 
\begin{equation} 
    \setlength{\abovedisplayskip}{4pt}  
\begin{aligned} 
  & 
  \|\boldsymbol{f}\|_{q_0} \le  
   \breve{C}_0 \|\boldsymbol{f}\|_2^{\frac{\epsilon_0}{4}} \cdot 
   ( \| \boldsymbol{f} \|_2 + \| \nabla_h \boldsymbol{f} \|_2 )^{1 - \frac{\epsilon_0}{4} }   
   \le \breve{C}_1 ( \| \boldsymbol{f} \|_2 + \| \boldsymbol{f} \|_2^{\frac{\epsilon_0}{4}} \cdot 
    \| \nabla_h \boldsymbol{f} \|_2^{1 - \frac{\epsilon_0}{4} }  )  , 
\\
  & \mbox{so that}  \quad 
   \|\boldsymbol{f}\|_{q_0}^2  
   \le 2 \breve{C}_1^2 ( \| \boldsymbol{f} \|_2^2  
   + \| \boldsymbol{f} \|_2^{\frac{\epsilon_0}{2}} \cdot 
    \| \nabla_h \boldsymbol{f} \|_2^{2 - \frac{\epsilon_0}{2} }  )  
    \le C ( \epsilon_0, \alpha )  \| \boldsymbol{f} \|_2^2  
    + \alpha \| \nabla_h \boldsymbol{f} \|_2^2,  \quad \forall \alpha > 0 , 
\\
  & 
  \tilde{C}_7  \| \widetilde{\mathbf{e}}^n \|_{q_0}^2  
  \le \tilde{C}_9  (\epsilon_0, \gamma) \| \widetilde{\mathbf{e}}^n \|_2^2 
  + \frac{\gamma}{8} \| \nabla_h \widetilde{\mathbf{e}}^n \|_2^2 , 
\\
  &
   \tilde{C}_8  \| \mathbf{e}^{n-1} \|_{q_0}^2  
  \le \tilde{C}_{10}  (\epsilon_0, \gamma, M_0) \| \mathbf{e}^{n-1} \|_2^2 
  + \frac{\gamma}{32 M_0^2} \| \nabla_h \mathbf{e}^{n-1} \|_2^2 ,   
   \setlength{\belowdisplayskip}{4pt}  
\end{aligned}  
    \label{convergence-3-7} 
\end{equation}  
in which the Young's inequality has been repeatedly applied. Its substitution into~\eqref{convergence-3-6} leads to 
\begin{equation} 
  \setlength{\abovedisplayskip}{4pt}    
  \begin{aligned} 
    & 
    \frac{1}{2 \Delta t} ( \|\widetilde{\mathbf{e}}^n\|_2^2 - \|\mathbf{e}^{n-1}\|_2^2 + \|\widetilde{\mathbf{e}}^n - \mathbf{e}^{n-1}\|^2_2 ) + \frac{3 \gamma}{4} \|\nabla_h\widetilde{\mathbf{e}}^n \|^2_2 
    - \frac{\gamma}{16 M_0^2} \|\nabla_h \mathbf{e}^{n-1} \|^2_2 
\\
  \le & 
  \tilde{C}_{11}  \| \mathbf{e}^{n-1} \|_2^2 + \tilde{C}_{12} \| \widetilde{\mathbf{e}}^n \|_2^2   
  + \frac12  \| \mathbf{R}^n \|_2^2 ,    \quad  \tilde{C}_{11} = \tilde{C}_5 
  + \tilde{C}_{10}  (\epsilon_0, \gamma, M_0) , \, \, \,  
  \tilde{C}_{12} = \tilde{C}_6 + \tilde{C}_9  (\epsilon_0, \gamma) . 
  \setlength{\belowdisplayskip}{4pt}  
\end{aligned} 
    \label{convergence-3-10} 
\end{equation}  
Meanwhile, by the a-priori error estimate~\eqref{e_lh1}, we observe that $\|\nabla_h \mathbf{e}^{n-1} \|_2^2 \le 2 M_0^2  (\| \widetilde{\mathbf{e}}^{n-1} \|_2^2 +  \| \nabla_h \widetilde{\mathbf{e}}^{n-1} \|_2^2)$, and arrive at 
\begin{equation} 
  \setlength{\abovedisplayskip}{4pt}    
  \begin{aligned} 
    & 
    \frac{1}{2 \Delta t} ( \|\widetilde{\mathbf{e}}^n\|_2^2 - \|\mathbf{e}^{n-1}\|_2^2 + \|\widetilde{\mathbf{e}}^n - \mathbf{e}^{n-1}\|^2_2 ) + \frac{3 \gamma}{4} \|\nabla_h\widetilde{\mathbf{e}}^n \|^2_2 
    - \frac{\gamma}{8} \|\nabla_h\widetilde{\mathbf{e}}^{n-1} \|^2_2 
\\
  \le & 
  \tilde{C}_{11}  \| \mathbf{e}^{n-1} \|_2^2 + \tilde{C}_{12} \| \widetilde{\mathbf{e}}^n \|_2^2   
  + \frac{\gamma}{8}  \| \widetilde{\mathbf{e}}^{n-1} \|_2^2  + \frac12  \| \mathbf{R}^n \|_2^2 . 
  \setlength{\belowdisplayskip}{4pt}  
\end{aligned} 
    \label{convergence-3-11} 
\end{equation}   

Moreover, by the $\ell^2$ error estimate~\eqref{e_l^2} in the normalization stage, we get 
\begin{equation} 
  \setlength{\abovedisplayskip}{4pt}    
  \begin{aligned} 
    & 
    \frac{1}{2 \Delta t} ( \| \mathbf{e}^n\|_2^2 - \|\mathbf{e}^{n-1}\|_2^2 + \|\widetilde{\mathbf{e}}^n - \mathbf{e}^{n-1}\|^2_2 + \| \widetilde{\mathbf{e}}^n - \mathbf{e}^n \|_2^2 ) + \frac{3 \gamma}{4} \|\nabla_h\widetilde{\mathbf{e}}^n \|^2_2 
    - \frac{\gamma}{8} \|\nabla_h\widetilde{\mathbf{e}}^{n-1} \|^2_2 
\\
  \le & 
  \tilde{C}_{11}  \| \mathbf{e}^{n-1} \|_2^2 + \tilde{C}_{12} \| \widetilde{\mathbf{e}}^n \|_2^2   
  + \frac{\gamma}{8}  \| \widetilde{\mathbf{e}}^{n-1} \|_2^2  + \frac12  \| \mathbf{R}^n \|_2^2  
\\
  \le & 
  ( \tilde{C}_{11}  + \frac{\gamma}{4} ) \| \mathbf{e}^{n-1} \|_2^2 
  + 2 \tilde{C}_{12} \| \mathbf{e}^n \|_2^2   
  + 2 \tilde{C}_{12} \| \widetilde{\mathbf{e}}^n - \mathbf{e}^n \|_2^2 
  + \frac{\gamma}{4} \| \widetilde{\mathbf{e}}^{n-1} - \mathbf{e}^{n-1} \|_2^2
  + \frac12  \| \mathbf{R}^n \|_2^2  . 
  \setlength{\belowdisplayskip}{4pt}  
\end{aligned} 
    \label{convergence-3-12} 
\end{equation}   
A summation in time implies that 
\begin{equation} 
  \setlength{\abovedisplayskip}{4pt}    
    \| \mathbf{e}^n\|_2^2 
    + \frac12 \sum_{k=0}^n \| \widetilde{\mathbf{e}}^k - \mathbf{e}^k \|_2^2  
    + \frac{5 \gamma \Delta t}{4} \sum_{k=0}^n \|\nabla_h\widetilde{\mathbf{e}}^k \|^2_2 
  \le  
   \tilde{C}_{13}  \Delta t \sum_{k=0}^n \| \mathbf{e}^k \|_2^2  
  +  \Delta t \sum_{k=0}^n \| \mathbf{R}^k \|_2^2  ,  
  \setlength{\belowdisplayskip}{4pt}  
    \label{convergence-3-13} 
\end{equation}   
with $\tilde{C}_{13} = 2 \tilde{C}_{11}  + \frac{\gamma}{2} + 4 \tilde{C}_{12}$, provided that $2 \tilde{C}_{12} + \frac{\gamma}{4}  \le \frac{1}{4 \Delta t}$. Therefore, an application of discrete Gronwall inequality results in an optimal rate error analysis 
\begin{equation} 
  \setlength{\abovedisplayskip}{4pt}    
    \| \mathbf{e}^n\|_2  
    + \Big( \frac12 \sum_{k=0}^n \| \widetilde{\mathbf{e}}^k - \mathbf{e}^k \|_2^2  \Big)^\frac12 
    + \Big( \frac{5 \gamma \Delta t}{4} \sum_{k=0}^n \|\nabla_h\widetilde{\mathbf{e}}^k \|^2_2 \Big)^\frac12 
  \le  \hat{C} ( \Delta t+ h^2 ) . 
  \setlength{\belowdisplayskip}{4pt}  
    \label{convergence-3-14} 
\end{equation}  
which in turn yields the desired convergence estimate~\eqref{convergence-0}, by setting $\hat{C}_0 = M_0 \hat{C}$. Meanwhile, it is observed that the a-priori assumption~\eqref{a priori-1} is recovered at the next time step $t^n$: 
\begin{equation}\label{a priori-6} 
  \setlength{\abovedisplayskip}{4pt} 
  \begin{aligned} 
  & 
\| \mathbf{e}^n \|_2  \le  \hat{C} ( \Delta t+ h^2 ) \le \frac{1}{2}(\Delta t^{1 - \frac{\epsilon_0}{8}} 
 + h^{2 - \frac{\epsilon_0}{4}}) , \, 
  \| \widetilde{\mathbf{e}}^n - \mathbf{e}^n \|_2 \le \sqrt{2} \hat{C} ( \Delta t+ h^2 )
  \le \frac{1}{2}(\Delta t^{1 - \frac{\epsilon_0}{8}} + h^{2 - \frac{\epsilon_0}{4}}) , 
\\
  &\| \widetilde{\mathbf{e}
  }^n \|_2  \leq \| \mathbf{e}^n \|_2 +  \| \widetilde{\mathbf{e}}^n - \mathbf{e}^n \|_2 \leq \Delta t^{1 - \frac{\epsilon_0}{8}} 
 + h^{2 - \frac{\epsilon_0}{4}},\,
 \| \nabla_h \widetilde{\mathbf{e}}^n \|_2  \le \frac{\hat{C} ( \Delta t + h^2) }{\gamma^\frac12 \Delta t^\frac12} 
 \le \Delta t^{\frac12 - \frac{\epsilon_0}{8}} + h^{1 - \frac{\epsilon_0}{4}} , 
\end{aligned}  
   \setlength{\belowdisplayskip}{4pt}
\end{equation}
under the scaling law {\color{black} $h^2 \lesssim \Delta t \lesssim h^{1+\epsilon_0}$}, provided that $\Delta t$ and $h$ are sufficiently small. This completes the proof of Theorem~\ref{thm1error} in the 3D case.

  In the 2D case, by employing more refined embedding inequalities \textcolor{black}{$$H^1(\Omega)\hookrightarrow L^q(\Omega),\,\,\forall\,2\leq  q <\infty,\,\,\text{in 2D case} $$} \textcolor{black}{and adjusting the two ndex values $p_0$ and $q_0$}
  \textcolor{black}{
  \begin{equation} 
\setlength{\abovedisplayskip}{3pt}
  2 < p_0 = \frac{2}{1 - \frac14 \epsilon_0} , \, \, \,  q_0 = 8 \epsilon_0^{-1} < +\infty , 
  \quad \mbox{so that}  \, \, \, \frac{1}{p_0} + \frac{1}{q_0} = \frac12,
\setlength{\belowdisplayskip}{3pt}
\end{equation} 
  }
  the dependence on the time-space step sizes can be relaxed to \textcolor{black}{$h^2\lesssim \Delta t \lesssim h^{\epsilon_0}$}.
\end{proof}

\begin{remark}
If the numerical design is based on the equivalent formulation~\eqref{eq1.6} of the LLG equation, the convergence analysis and error estimate will be more straightforward. In this formulation, the first nonlinear term on the right hand side is easier to handle, while the second nonlinear term does not contain a nonlinear product with $\Delta \mathbf{m}$, and only $| \nabla \mathbf{m}|$ is involved in this nonlinear expansion. In turn, in the standard $L^\infty(0,\,T;\,L^2(\Omega)) \cap L^2(0,\,T;H^1(\Omega))$ convergence analysis, the left hand side diffusion part gives a perfect $L^2(0,\,T;H^1(\Omega))$ error dissipation, and the numerical error inner product associated with the nonlinear part $| \nabla \mathbf{m}|^2 \mathbf{m}$ would only contain the terms in the form of either $\langle \mathbf{e}^{n-1}, \widetilde{\mathbf{e}}^n \rangle$ or $\langle \nabla_h \mathbf{e}^{n-1}, \widetilde{\mathbf{e}}^n \rangle$ (combined with certain nonlinear coefficient functions), and no pair of error gradient inner product is expected. These error estimates could be accomplished with the help of discrete H\"older inequality and Sobolev embedding; see the related works in  \cite{an2021optimal, gui2022convergence}, etc. 
\end{remark} 

\subsection{Further insights}
If the numerical scheme is designed to be an approximation to the formulation~\eqref{eq1.6}, a theoretical justification of the global-in-time energy dissipation would become a very challenging issue, based on the fact that, an inner product with $-\Delta \mathbf{m}$ by the right hand side of \eqref{eq1.6} would not directly lead to a valuable estimate. Because of this subtle fact, most projection-style numerical schemes based on~\eqref{eq1.6} could hardly preserve an energy dissipation at a theoretical level. 

On the other hand, if the original PDE formulation~\eqref{e_original model} is taken, the numerical design of energy dissipative scheme becomes more straightforward, due to the gradient structure on the right hand side. However, a theoretical convergence analysis and error estimate for such numerical schemes would be much more challenging, because of the numerical error associated with the nonlinear part $\mathbf{m} \times ( \mathbf{m} \times \Delta \mathbf{m})$. A careful application of summation by parts formula reveals that, the numerical error inner product associated with this nonlinear part would contain an error gradient pair inner product, namely in the form of $\langle \nabla_h \mathbf{e}^{n-1}, \nabla_h \widetilde{\mathbf{e}}^n \rangle$, combined with certain nonlinear coefficient functions. Consequently, an optimal rate error estimate could hardly go through without the error gradient coefficient control.

The convergence analysis for the proposed numerical scheme~\eqref{discretescheme}, as well as the rewritten form~\eqref{discrete}, faces the same theoretical challenges, since it is based on the original PDE formulation~\eqref{e_original model}. In the detailed expansion~\eqref{discreteip} for the weak form, the nonlinear error inner product associated with the last two terms on the right hand side would contain an error gradient pair inner product. On the other hand, the point-wise length preservation of both the numerical and exact solutions has greatly facilitated the convergence analysis. Thanks to the point-wise orthogonality identities~\eqref{lem4eq1},~\eqref{lem4eq2}, which come from the length preservation of the numerical solution, the numerical system could be transformed into an equivalent weak formulation~\eqref{m_hconsistency}. Of course, in such a reformulation, the associated nonlinear error estimate would still contain certain error gradient pair inner product terms. However, these terms become much easier to control than the ones without the reformulation. In the nonlinear error expansions, as presented in \eqref{I_1}-\eqref{I9}, it is observed that $I_1$ and $I_8$ contain certain error gradient pair inner product terms. Meanwhile, the $I_1$ term vanishes, due to the point-wise orthogonality of the two inner product functions, while the nonlinear coefficient of the $I_8$ term is of $O (\Delta t)$, which comes from the regularity of the exact solution. These state-of-arts efforts have overcome the theoretical challenges associated with the PDE formulation \eqref{e_original model}, and an optimal rate convergence analysis finally goes through. 

In fact, the PDE formulations \eqref{eq1.6} and \eqref{e_original model} are equivalent at the continuous level. In terms of the proposed numerical scheme~\eqref{discretescheme}, it is observed that the third term on the right hand side nonlinear expansions corresponds to exactly a numerical approximation to the inner product with the nonlinear term $| \nabla \mathbf{m}|^2 \mathbf{m}$ appearing in \eqref{eq1.6}. In other words, although the numerical derivation of \eqref{discretescheme} is based on the original PDE formulation \eqref{e_original model}, the magic point of the numerical reformulation~\eqref{m_hconsistency} is to transform the numerical system into a consistent approximation to an alternate PDE formulation \eqref{eq1.6}, in a weak form, combined with a few $O (\Delta t)$ numerical correction terms. As a result, the convergence analysis and error estimate could follow the standard process for \eqref{eq1.6}. With the help of a point-wise length preservation, such a magic reformulation makes thee proposed numerical schemes~\eqref{discretescheme} enjoy the theoretical convenience in both the energy dissipation analysis and error estimate.

\begin{remark} 
Other than the standard semi-implicit projection (SIP) method~\eqref{semidiscrete}, there have been some works of quasi-projection method~\cite{ChenJ2021a, xie20a}, in which the discrete temporal differentiation $\widetilde{\mathbf{m}}^{n+1} - \mathbf{m}^{n}$ is replaced by $\widetilde{\mathbf{m}}^{n+1} -\widetilde{\mathbf{m}}^{n}$. In this approach, the temporal update is only applied to the intermediate numerical variable, and the normalized original numerical variable $\mathbf{m}^n$ only plays a role of the nonlinear coefficient function in the algorithm. The convergence analysis for the quasi-projection method turns out to be much easier than the standard projection one, since there is no need to derive the renormalization stage error estimate (such as~\eqref{e_l^2} and \eqref{e_lh1} in Lemma~\ref{lem6}), and a direct $H^1$ error estimate would be feasible. On the other hand, a theoretical analysis of energy dissipation for the quasi-projection method would face a serious difficulty. 
\end{remark}

\begin{remark} 
Since the point-wise length preservation has played an essential role in the finite difference weak reformulation~\eqref{m_hconsistency}, the numerical design and theoretical analysis could be effectively extended to the mass-lumped finite element scheme, which is appropriate to handle a point-wise estimate. It is anticipated that the error analysis for the finite element version will differ from traditional finite element approaches, which constitutes a promising direction for future research works. 
\end{remark}

\begin{table}[htbp]
	\centering
	\caption{Error and convergence rates for the magnetization in discrete $L^2$ and $H^1$ norms  with $\Delta t = h^2$}
	\label{table1}
	\small
	\begin{tabular}{c|c|c|c|c} \hline\hline
		$N_x\times N_y$&$||\mathbf{e}^n||_{l^{\infty}(l^{2})}$ &$Rate$
		&$||\nabla_h\mathbf{e}^n||_{l^2(l^2)}$ &$Rate$  \\ \hline
		$8\times 8$ &4.47  &---       &5.54      &---          \\
		$16\times 16$ &2.16  &1.05     &1.83     &1.60         \\
		$32\times 32$ &6.24E-1  &1.79    &4.70E-1      &1.96        \\
		$64\times 64$ &1.64E-1  &1.93     &1.21E-1     &1.96        \\
		$128\times 128$ &4.16E-2  &1.98     &3.04E-2      &1.99         \\
		\hline\hline
	\end{tabular}
\end{table}
\begin{table}[htbp]
	\centering
	\caption{Error and convergence rates for the magnetization in discrete $L^2$ and $H^1$ norms  with $\Delta t = h$}
	\label{table2}
	\small
	\begin{tabular}{c|c|c|c|c} \hline\hline
		$N_x\times N_y$&$||\mathbf{e}^n||_{l^{\infty}(l^{2})}$ &$Rate$
		&$||\nabla_h\mathbf{e}^n||_{l^2(l^2)}$ &$Rate$  \\ \hline
		$8\times 8$ &1.90  &---       &1.56      &---          \\
		$16\times 16$ &1.02  &0.90     &7.98E-1     &0.97         \\
		$32\times 32$ &5.26E-1  &0.95    &3.99E-1      &1.00        \\
		$64\times 64$ &2.67E-1  &0.98     &1.99E-1     &1.01        \\
		$128\times 128$ &1.34E-1  &0.99     &9.89E-2      &1.00         \\
		\hline\hline
	\end{tabular}
\end{table}

\section{Numerical results} 
In this section, we present a few numerical experiments to validate the theoretical analysis. 
Four illustrative examples are provided to demonstrate the effectiveness and accuracy of the proposed method.

\subsection{Convergence rate} The convergence rate is tested for the LLG equation equation with an external force. The exact solution is set as 
\begin{equation*}
   \setlength{\abovedisplayskip}{4pt} 
    \left\{
    \begin{array}{l}
         m_e^x(x,y,t) = \sin(t+x)\cos(t+y),  \\
         m_e^y(x,y,t) = \cos(t+x)\cos(t+y),  \\
         m_e^z(x,y,t) = \sin(t+y).
    \end{array}  \right. 
    \setlength{\belowdisplayskip}{4pt} 
\end{equation*}
It is clear that $\mathbf{m}_e$ defined above satisfies an exact point-wise length preservation, $|\mathbf{m}| =1$. We take $\beta = 1,\,\gamma=1,\,T=1$ and $\Omega = [0,\,2\pi]^2$, and periodic boundary conditions are used. The initial spatial partition is taken as a $8\times8$ grid, and the time step size is refined as $\Delta t = 1/N_x^2 = 1/N_y^2$, to demonstrate both the temporal and spatial convergence rates. Tables \ref{table1} and \ref{table2} clearly indicate the second-order and first-order convergence rates, respectively, with  $\Delta t = h^2$ and $\Delta t = h$ in discrete $\|\cdot\|_2$ and $\|\nabla_h(\cdot)\|_2$ norms. These numerical results are in good agreement with the theoretical predictions in Theorem \ref{thm1error}.

\subsection{Energy dissipation} In this simulation, we verify that the constructed scheme \eqref{discretescheme} is unconditionally dissipative
with the original energy. We set 
\begin{equation*} 
  \setlength{\abovedisplayskip}{4pt}  
  T = 1,\,\beta = 1,\,\Delta t = 0.01, N_x = N_y = 100,\,\Omega = [0,\,2\pi]^2,
  \setlength{\belowdisplayskip}{4pt} 
\end{equation*} 
and use the following initial condition
\begin{equation*}
   \setlength{\abovedisplayskip}{4pt}  
    \left\{
    \begin{array}{l}
         m^x_e(x,y,0) = \cos(x)\cos(y)\sin(0.1),  \\
         m^y_e(x,y,0) = \cos(x)\cos(y)\cos(0.1),  \\
         m^z_e(x,y,0) = (1 - \cos^2(x)\cos^2(y))^{1/2}. 
    \end{array} \right. 
    \setlength{\belowdisplayskip}{4pt} 
\end{equation*}

The evolution of the initial energy, given by  $ \frac{1}{2}\int_\Omega|\nabla\mathbf{m}|^2d\mathbf{x}$, for different Gilbert damping parameters $\gamma = 0.1,\,0.5,\,1,\,10$, is presented in Figure \ref{fig1} (a). It is observed that the original energy decreases monotonically for all damping parameter values, indicating a dissipative behavior.

\begin{figure}[htbp]
	\centering
    \subfigure[Example 2]
    {\includegraphics[width=0.35\linewidth, trim = {0cm 0cm 0cm 0cm}, clip]{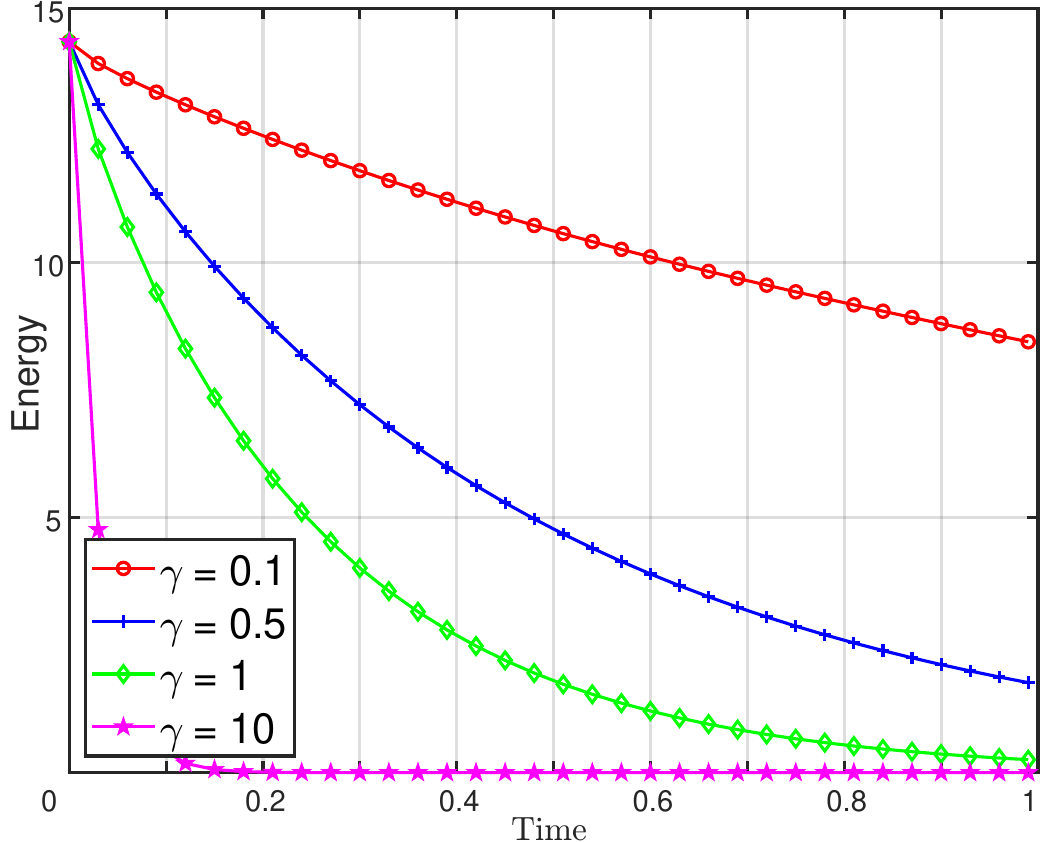}}
    \subfigure[Example 3]
    {\includegraphics[width=0.35\linewidth, trim = {0cm 0cm 0cm 0cm}, clip]{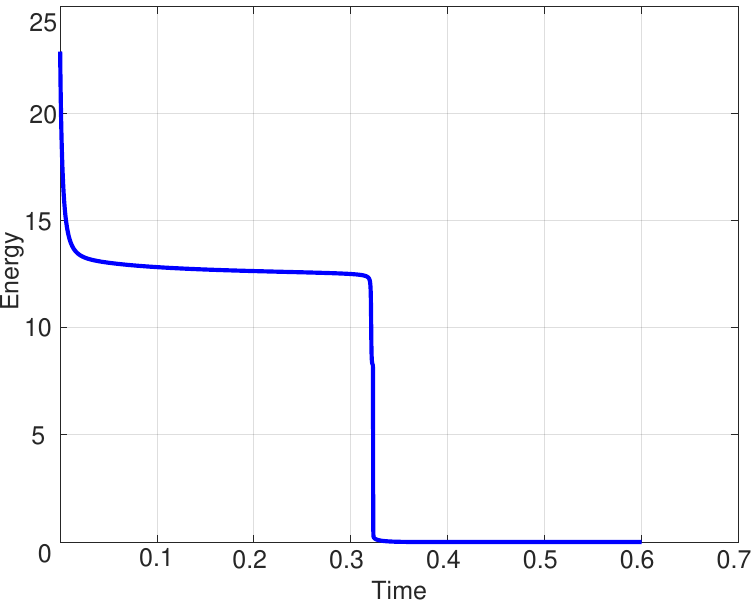}}
	\caption{Evolutions of original energy $\displaystyle \frac{1}{2}\|\nabla_h\mathbf{m}^n\|^2_{l^2}$ for Examples 2 and 3.}
	\label{fig1}
\end{figure}

\begin{figure}[htbp]
	\centering
    \subfigure[$T=0$]
	{\includegraphics[width=0.3\linewidth, trim = {0cm 0cm 0cm 0cm}, clip]{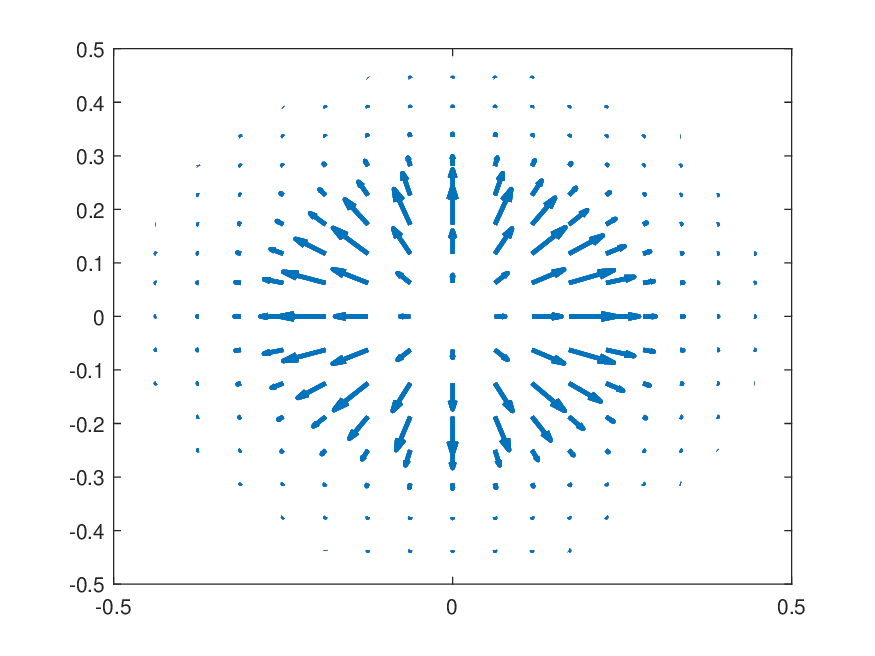}}
	\subfigure[$T=0.06$]
	{\includegraphics[width=0.3\linewidth, trim = {0cm 0cm 0cm 0cm}, clip]{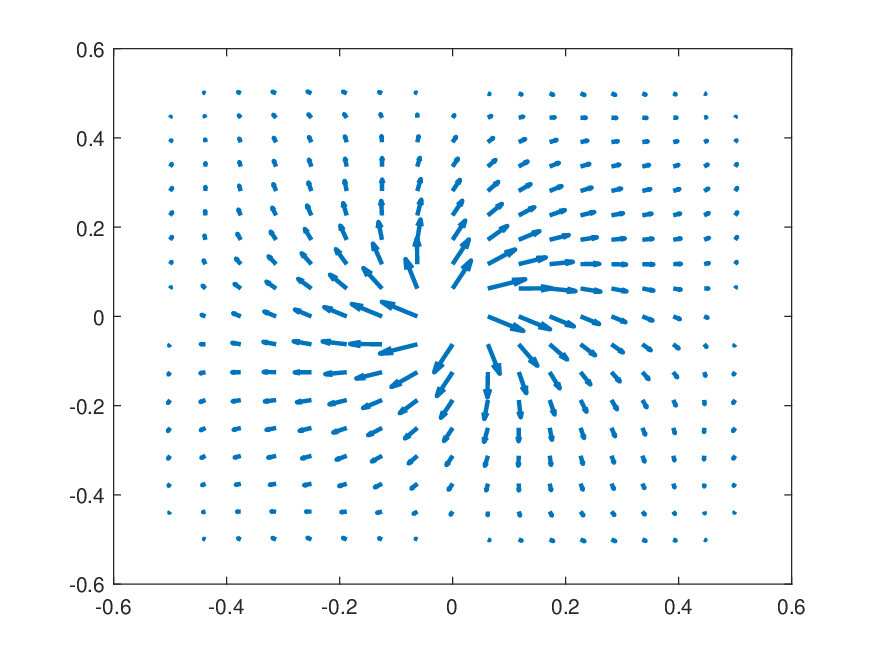}}
	\subfigure[$T=0.15$]
	{\includegraphics[width=0.3\linewidth, trim = {0cm 0cm 0cm 0cm}, clip]{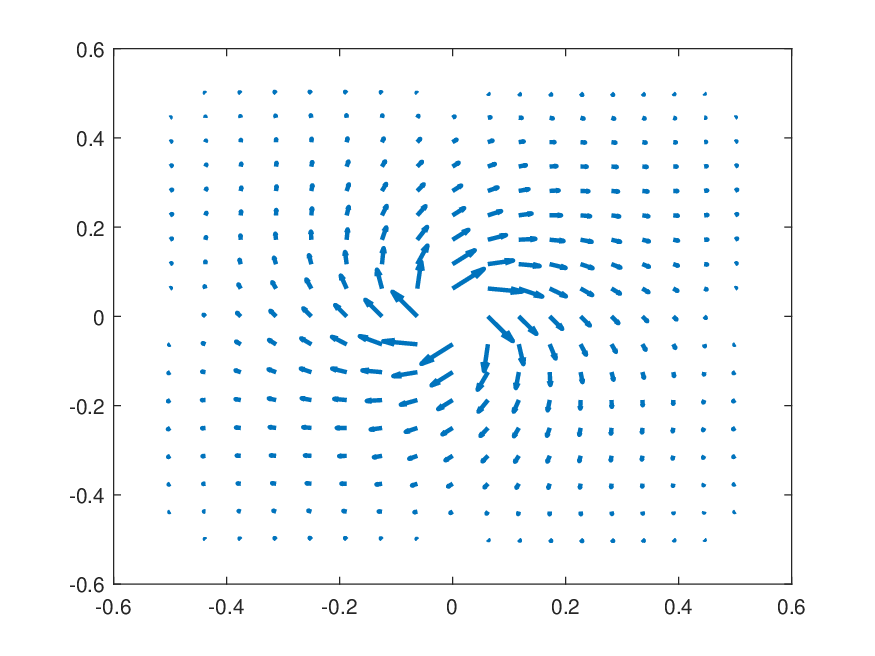}}\\
	\subfigure[$T=0.30$]
	{\includegraphics[width=0.3\linewidth, trim = {0cm 0cm 0cm 0cm}, clip]{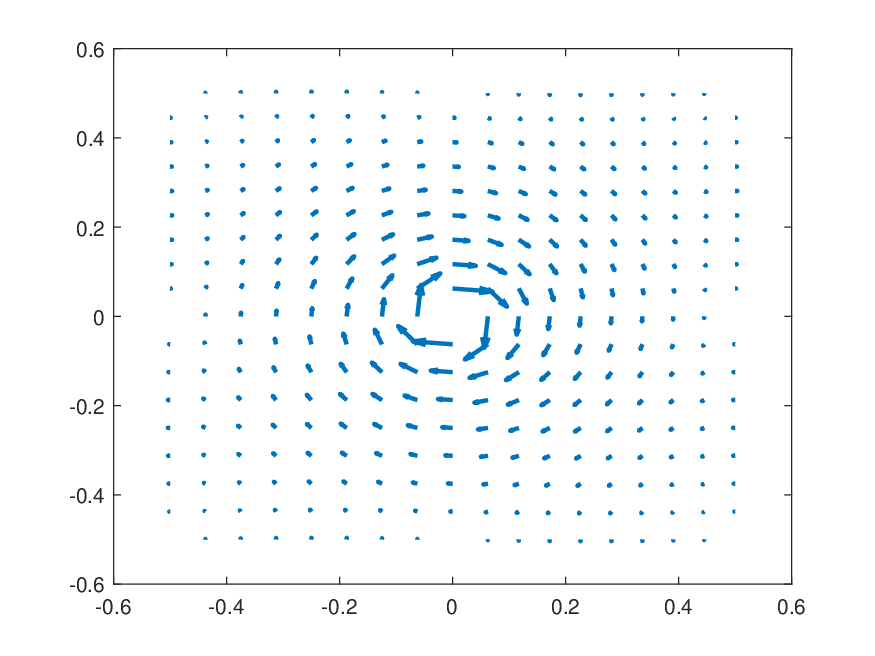}}
	\subfigure[$T=0.32$]
	{\includegraphics[width=0.3\linewidth, trim = {0cm 0cm 0cm 0cm}, clip]{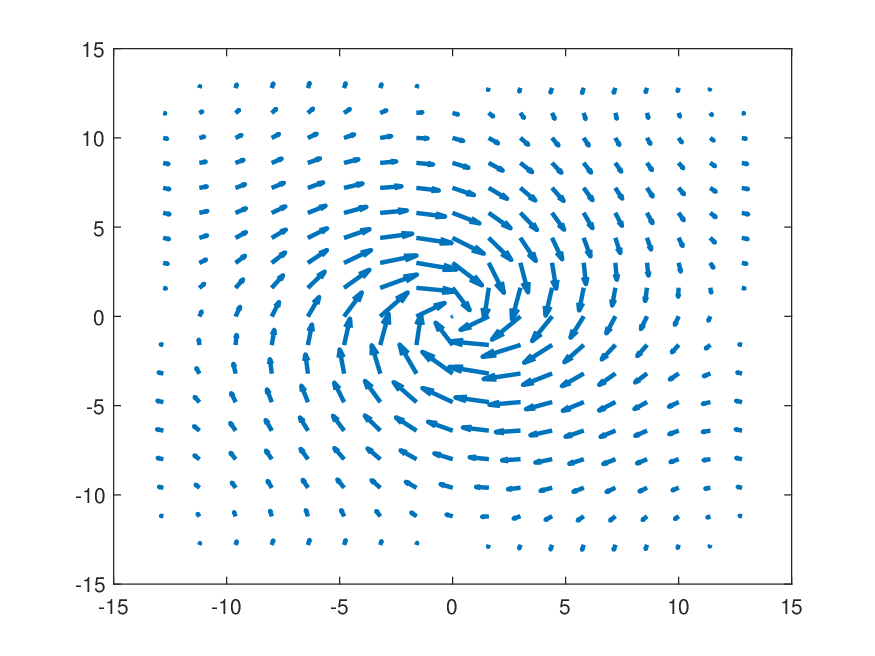}}
    \subfigure[$T=0.35$]
	{\includegraphics[width=0.3\linewidth, trim = {0cm 0cm 0cm 0cm}, clip]{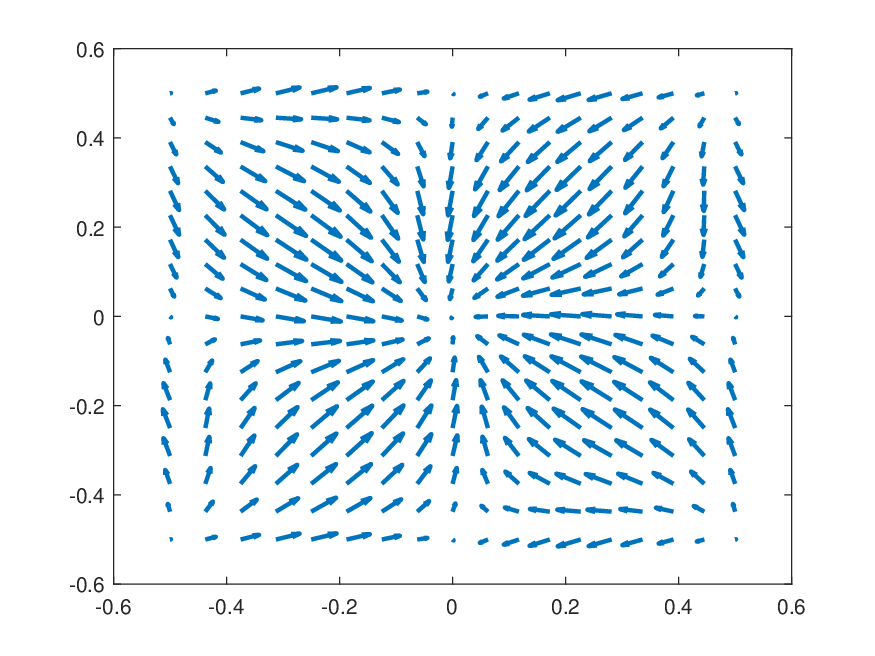}}
	\caption{Numerical magnetization $\mathbf{m}^n$ (projected on $x_1x_2$-plane) for Example 3.}
	\label{fig2}
\end{figure}

\begin{figure}[htbp]
	\centering
    \subfigure[$T=0$]
	{\includegraphics[width=0.3\linewidth, trim = {0cm 0cm 0cm 0cm}, clip]{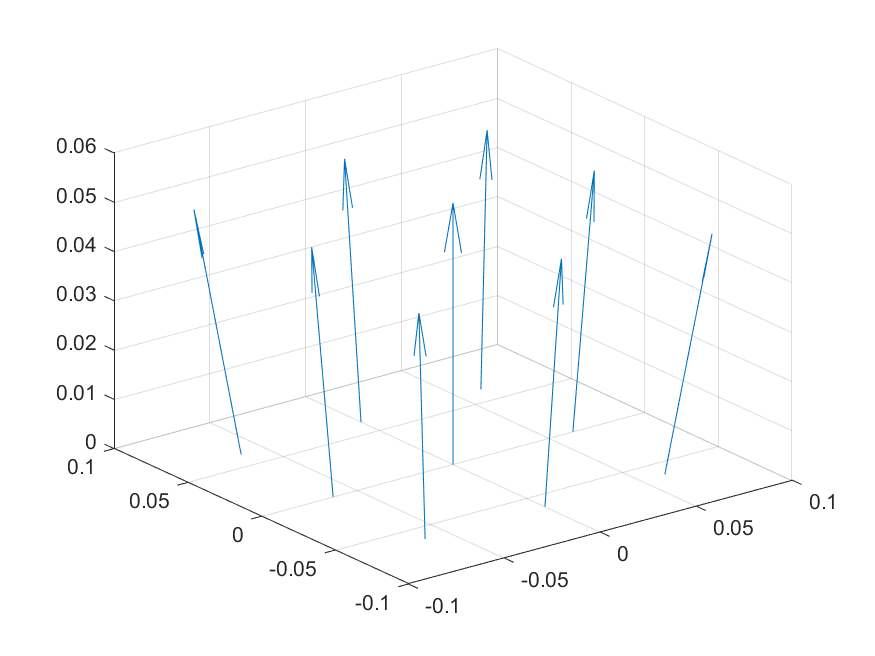}}
	\subfigure[$T=0.06$]
	{\includegraphics[width=0.3\linewidth, trim = {0cm 0cm 0cm 0cm}, clip]{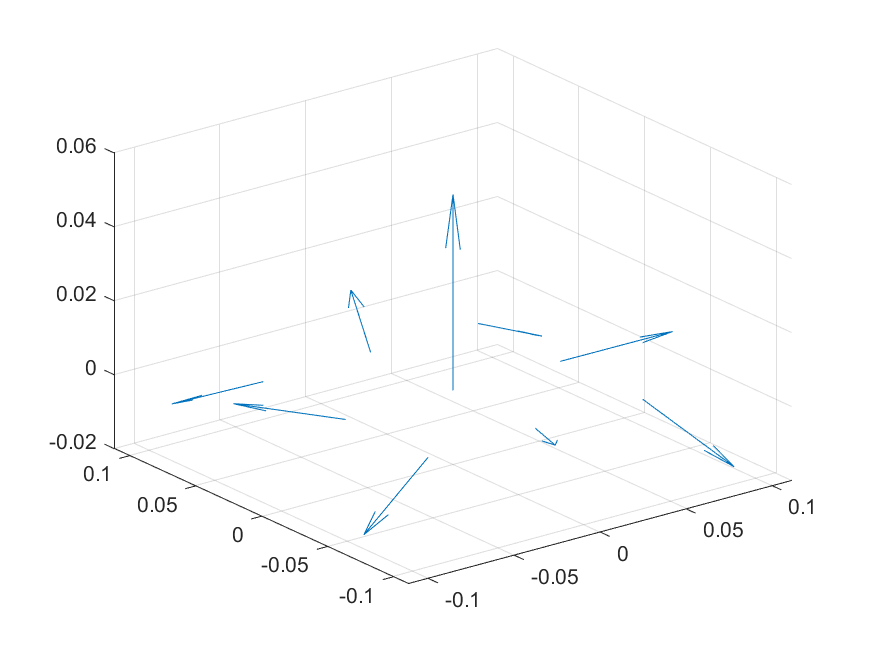}}
	\subfigure[$T=0.15$]
	{\includegraphics[width=0.3\linewidth, trim = {0cm 0cm 0cm 0cm}, clip]{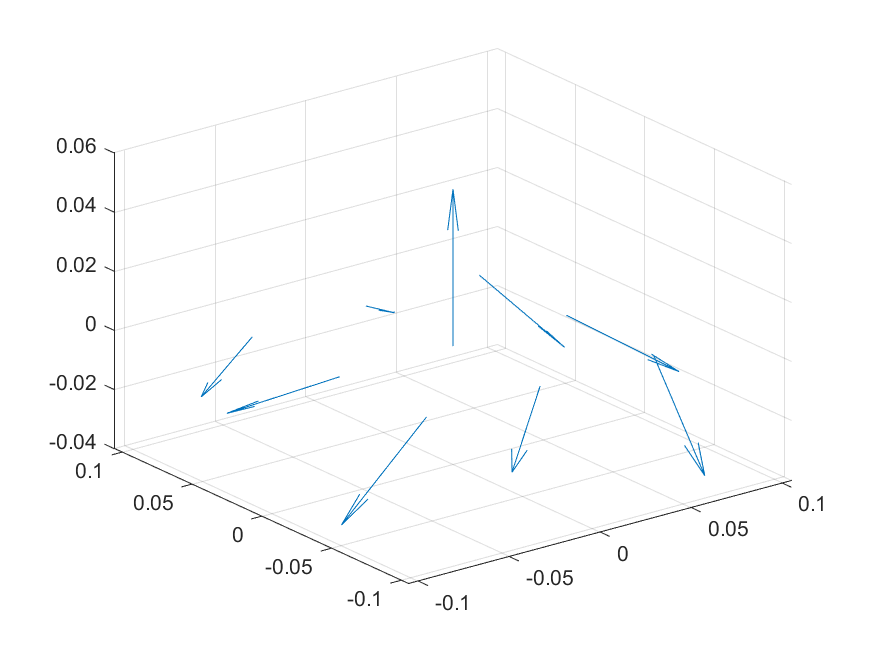}}\\
	\subfigure[$T=0.30$]
	{\includegraphics[width=0.3\linewidth, trim = {0cm 0cm 0cm 0cm}, clip]{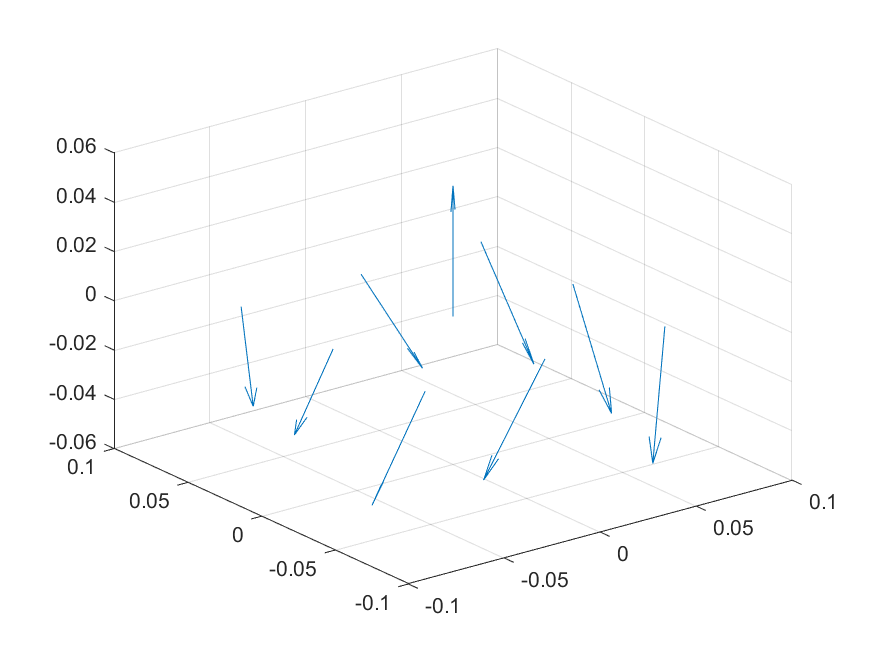}}
	\subfigure[$T=0.32$]
	{\includegraphics[width=0.3\linewidth, trim = {0cm 0cm 0cm 0cm}, clip]{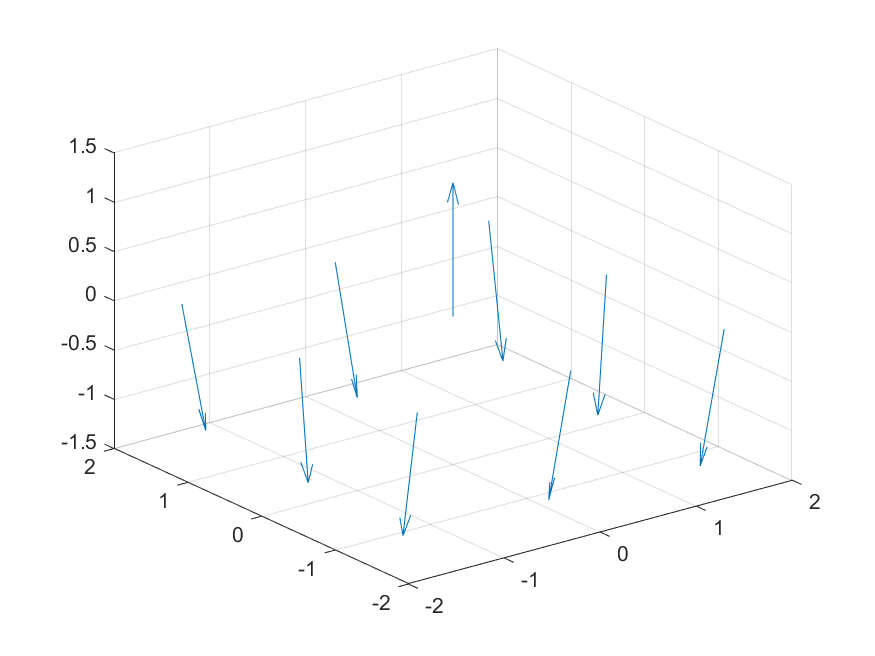}}
    \subfigure[$T=0.35$]
	{\includegraphics[width=0.3\linewidth, trim = {0cm 0cm 0cm 0cm}, clip]{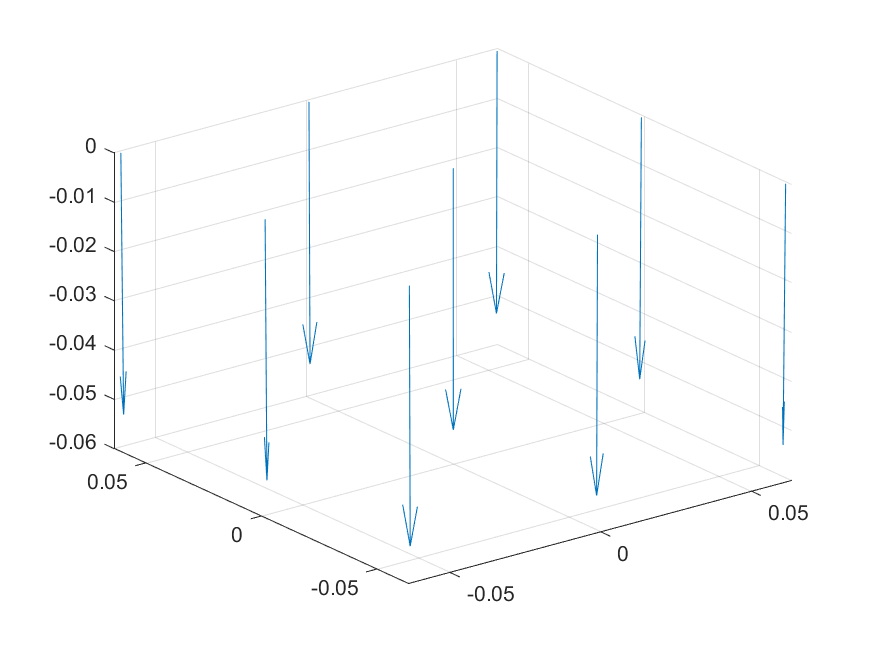}}
	\caption{Numerical magnetization $\mathbf{m}^n$ around the origin for Example 3.}
	\label{fig3}
\end{figure}

\subsection{Phenomenon of blowup}
In this subsection, we investigate the possible blowup of the LLG equation with certain smooth initial data, as given by \cite{an2021optimal, bartels2008numerical, chen1998evolution}. Set $\Omega = [-1/2,\,1/2]^2$, and let the initial data $\mathbf{m}^0$ be defined by
\begin{equation*}
   \setlength{\abovedisplayskip}{4pt}  
    \mathbf{m}^0(\mathbf{x}) = \left\{
    \begin{array}{ll}
         (0,\,0,\,-1) , &\,\forall\,|\mathbf{x}|\geq1/2,  \\
        \displaystyle (\frac{2x_1A}{A^2+|\mathbf{x}|^2},\,\frac{2x_2A}{A^2+|\mathbf{x}|^2},\,\frac{A^2 - |\mathbf{x}|^2}{A^2+|\mathbf{x}|^2}) , &\,\forall\,|\mathbf{x}|\leq1/2, 
    \end{array} \right. 
   \setlength{\belowdisplayskip}{4pt} 
\end{equation*}
with $A = (1 - 2|\mathbf{x}|)^4$. We take the the parameters $\beta = 1,\,\gamma = 1$ in the LLG equation \eqref{e_original model}.

The LLG equation is solved by the proposed scheme \eqref{discretescheme} on a uniform mesh with $h = \frac{1}{256}$ and $\Delta t = 10^{-4}$. The orthogonal projection of the vector field $\mathbf{m}^{n+1}$ on the $x_1x_2$-plane, and close-up pictures of $\mathbf{m}^{n+1}$ near the origin at a sequence of time instants, $t = 0,\,0.06, 0.15,\,0.30,\,0.32,\,0.35$, are displayed in Figures. \ref{fig2}-\ref{fig3}. It is observed that $\mathbf{m}^{n+1}$ preserves $(0,\,0,\,1)^T$ at the origin and gradually turns down to $(0,\,0,\,-1)^T$ near the origin, which is consistent with the blowup phenomenon presented in \cite{an2021optimal, bartels2008numerical}. Furthermore, the original energy is verified to be dissipative in Figure. \ref{fig1} (b). 

\subsection{Static skyrmions}
In this subsection, we investigate static skyrmions in Dzyaloshinskii-Moriya materials with easy-axis anisotropy. Magnetic skyrmions are a class of vortex-like spin configurations observed in specific magnetic materials. Due to their topological stability, nanoscale dimensions, facile manipulability, and particle-like properties, skyrmions are regarded as promising candidates for a range of advanced applications, including next-generation information storage devices and neuromorphic computing systems \cite{jia2025electrically, jiang2015blowing, nagaosa2013topological, romming2013writing}. In the previous numerical studies, a shooting method \cite{bogdanov1999stability} and a relaxation algorithm \cite{komineas2015skyrmion} have been the primary approaches to investigate skyrmion behaviors. Among these schemes, the shooting method is highly sensitive to initial conditions, whereas the relaxation algorithm transforms the original static problem into the task of determining the steady-state solution of a dissipative system. Consequently, the relaxation algorithm necessitates a strict dissipativity of the numerical scheme. Our proposed numerical method, which guarantees an unconditional original energy dissipation, effectively ensures this requirement. 

\begin{figure}[htbp]
	\centering
	\subfigure[$(m_1,\,m_2,\,m_3)$]
	{\includegraphics[width=0.35\linewidth, trim = {0cm 2.8cm 0cm 1.8cm}, clip]{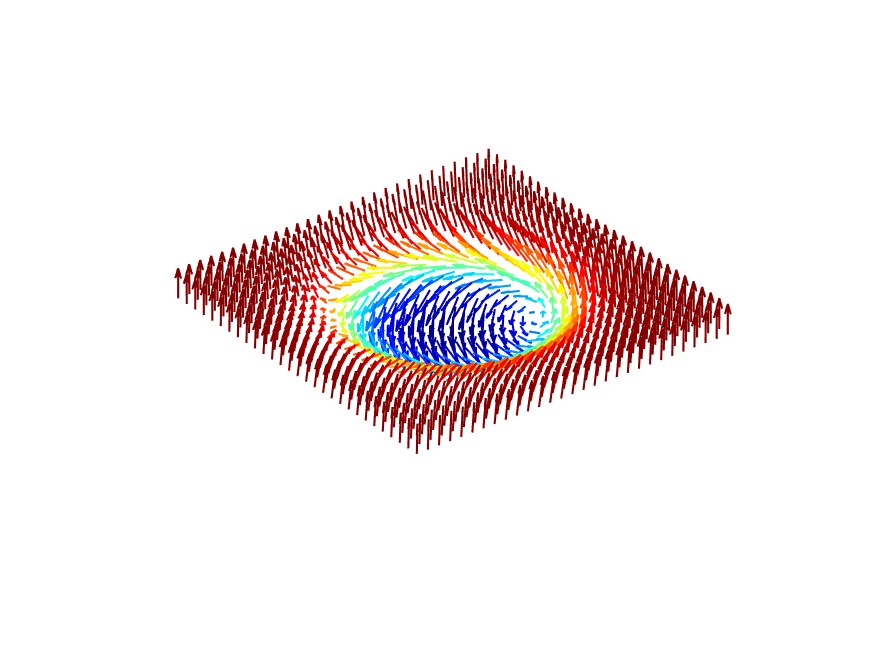}}
    \subfigure[$(m_1,\,m_2,\,m_3)$]
	{\includegraphics[width=0.35\linewidth, trim = {0cm 2.8cm 0cm 1.8cm}, clip]{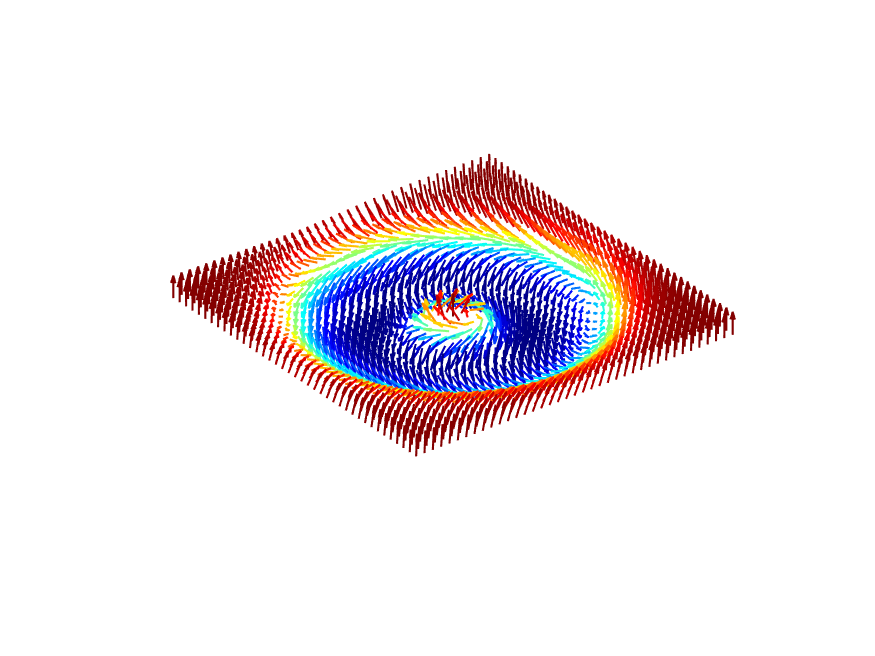}}\\
     \subfigure
	{\includegraphics[width=0.35\linewidth, trim = {2cm 3cm 2cm 1cm}, clip]{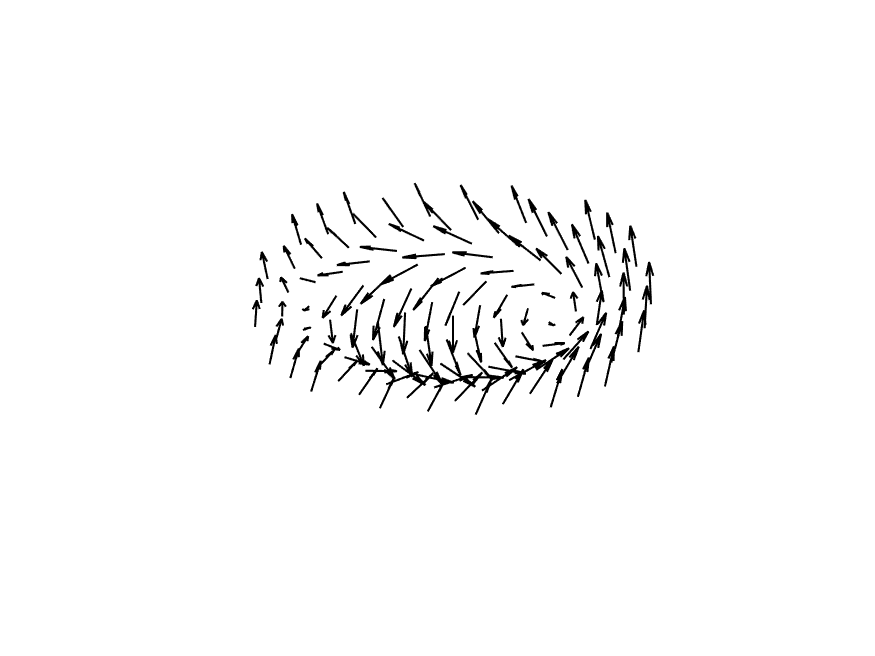}}
    \subfigure
	{\includegraphics[width=0.35\linewidth, trim = {2cm 3cm 2cm 1cm}, clip]{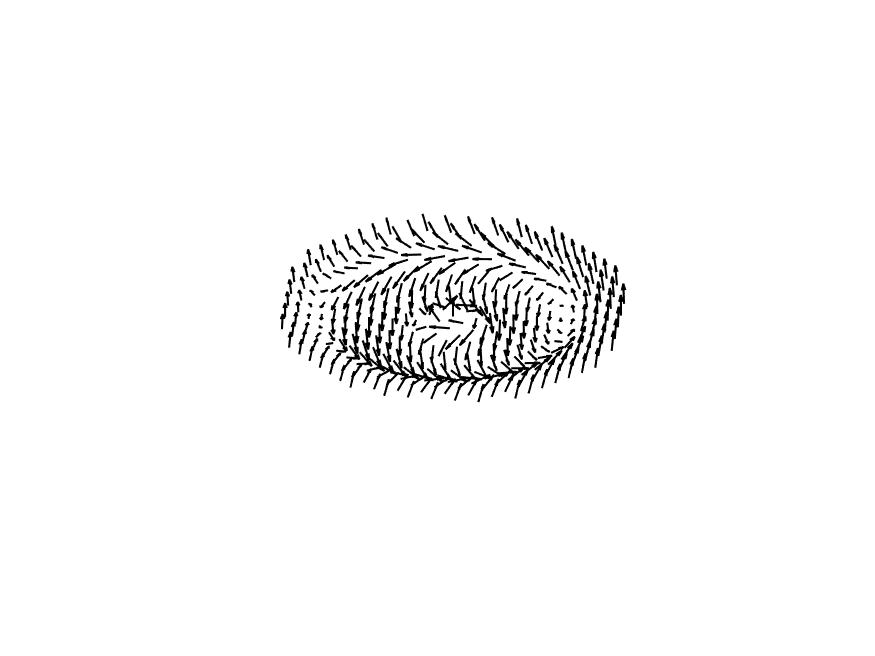}}\\
    \addtocounter{subfigure}{-2}
    \subfigure[$Q = 1$]
	{\includegraphics[width=0.35\linewidth, trim = {1cm 4.5cm 1cm 4.5cm}, clip]{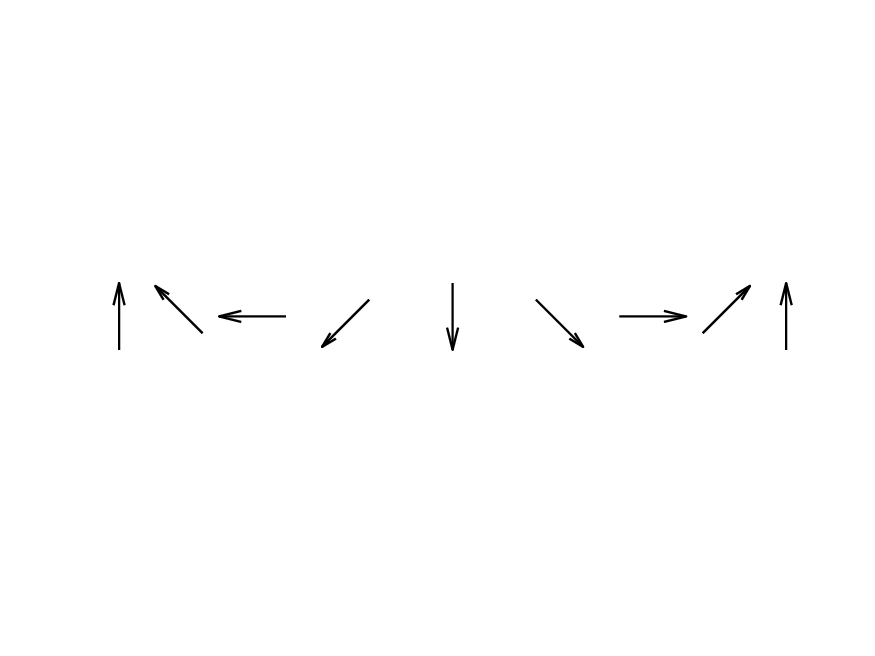}}
    \subfigure[$Q = 0$]
	{\includegraphics[width=0.35\linewidth, trim = {1cm 4.5cm 1cm 4.5cm}, clip]{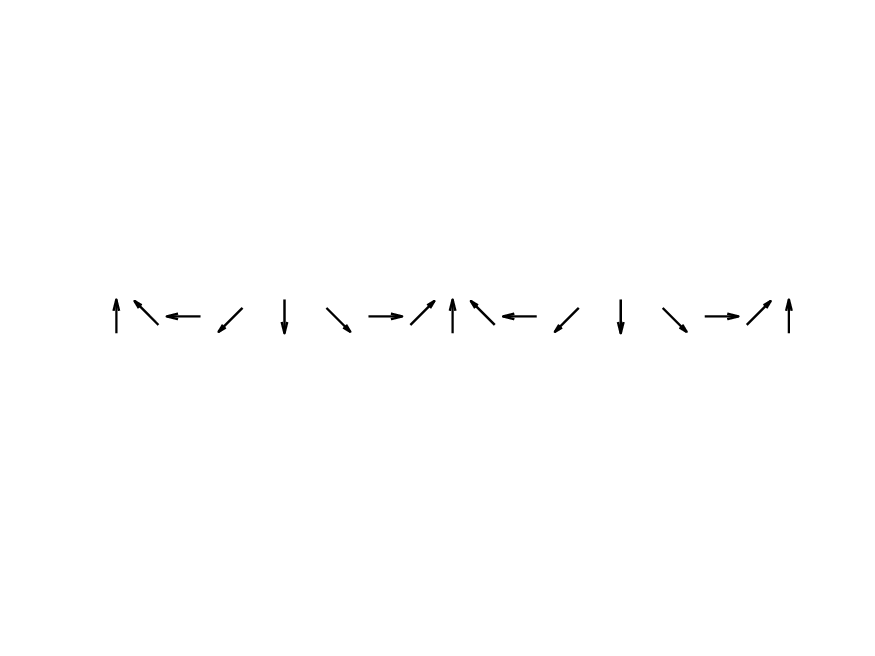}}\\
    \subfigure[$(m_1,\,m_2)$]
	{\includegraphics[width=0.35\linewidth, trim = {0cm 0cm 0cm 0cm}, clip]{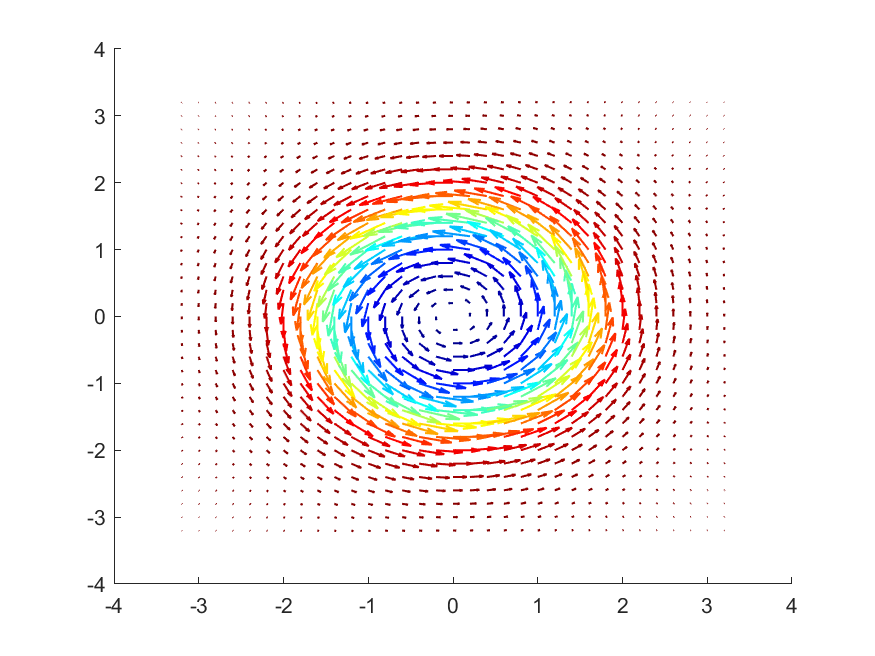}}
    \subfigure[$(m_1,\,m_2)$]
	{\includegraphics[width=0.35\linewidth, trim = {0cm 0cm 0cm 0cm}, clip]{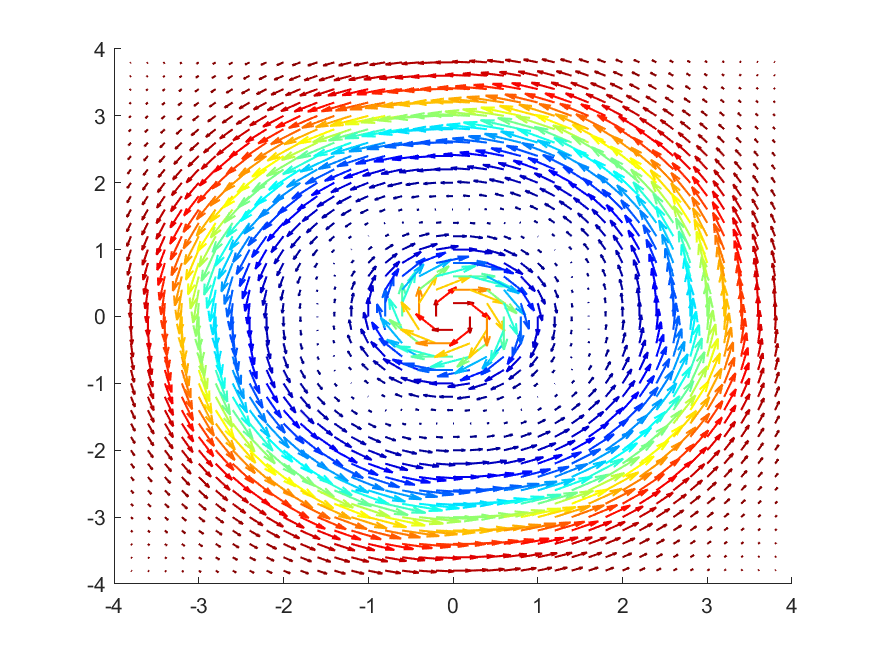}}
	\caption{The static axially symmetric skyrmion for Example 4.}
	\label{fig6}
\end{figure}
It is assumed that a thin film with easy-axis anisotropy is perpendicular to the $xy$-plane of the film and with a DM interaction energy term \cite{bogdanov1994thermodynamically}, where $\Phi(\mathbf{m}) = m_1^2 + m_2^2$. The energy functional reads
\begin{equation*}
    \setlength{\abovedisplayskip}{4pt}  
    E = \frac{1}{2} \int|\nabla \mathbf{m}|^2\,dxdy + \frac{\kappa}{2} \int \Phi(\mathbf{m})\,dxdy + \lambda \int [(\nabla \times \mathbf{m}) \cdot \mathbf{m}]\,dxdy,
   \setlength{\belowdisplayskip}{4pt} 
\end{equation*}
where $\lambda = \frac{\mathcal{D}}{|\mathcal{D}|} = \pm 1$ will be referred to as the chirality and $\kappa = \frac{K_u}{K_0}$ is the rationalized (dimensionless) anisotropy constant. For simplicity, we take $\kappa = 3,\,\lambda =1$, and see that 
\begin{equation*}
  \setlength{\abovedisplayskip}{4pt} 
    \mathbf{H_{eff}} = - \frac{\delta E}{\delta \textbf{m}} = \Delta \textbf{m} + 3 m_3\hat{\mathbf{e}}_3 - 2[\partial_2m_3 \hat{\mathbf{e}}_1- \partial_1m_3\hat{\mathbf{e}}_2 + (\partial_1m_2 - \partial_2m_1)\hat{\mathbf{e}}_3]. 
    \setlength{\belowdisplayskip}{4pt} 
\end{equation*}

It should be specially noticed that the characteristic of the magnetic configuration is defined by the skyrmion number, whose specific expression is given by  
\begin{equation*}
  \setlength{\abovedisplayskip}{4pt}  
    Q = \frac{1}{4\pi}\int \frac{1}{2}\epsilon_{ij}\textbf{m}\cdot(\partial_i\textbf{m}\times \partial_j\textbf{m})dxdy, 
  \setlength{\belowdisplayskip}{4pt} 
\end{equation*}
where $\epsilon_{ij}$ is the Levi-Civita symbol. In the polar coordinate $(\rho,\,\phi)$, the representation becomes 
\begin{equation*} 
   \setlength{\abovedisplayskip}{4pt}  
    m_1 = \sin\Theta\cos\Phi, \quad m_2=\sin\Theta\sin\Phi \quad ,m_3=\cos\Theta, 
   \setlength{\belowdisplayskip}{4pt} 
\end{equation*}
with $\Theta = \theta(\rho),\,\,\Phi = \phi + \pi/2$, and the expression for $Q$ could be transformed into
\begin{equation*} 
  \setlength{\abovedisplayskip}{4pt}  
    Q = \frac{1}{2}\int^\infty_0 \frac{dm_3}{d\rho}d\rho = \frac{1}{2}[m_3(\infty) - m_3(0)]. 
  \setlength{\belowdisplayskip}{4pt} 
\end{equation*}

It is shown in \cite{komineas2015skyrmion} that one could initialize \eqref{e_original model1} $(\beta = 0,\,\gamma = 1)$ with an essentially arbitrary spin configuration (where \(Q = 1\) with $m_3(\infty) = 1,\,m_3(0) = -1$), and this configuration will subsequently converge to a local minimum of the energy functional with a static \(Q = 1\) skyrmion. The principle of this method lies in a numerical algorithm with dissipative properties. We solve the LLG equation \eqref{e_original model1} $(\beta = 0,\,\gamma = 1)$ by the scheme \eqref{discretescheme} on a \(256 \times 256\) uniform mesh with \(h = 0.1\), \(\Delta t = 0.01\), and homogeneous Neumann boundary conditions. The numerical lattice is sufficiently large, and it is assumed that an infinite thin film is simulated in all the presented computations. The initial spin configuration will evolve under the action of \eqref{e_original model1} $(\beta = 0,\,\gamma = 1)$, causing its energy to decrease monotonically and eventually converge to the static solution of \eqref{e_original model1} $(\beta = 1,\,\gamma = 0)$. 

As for the case of \(Q = 0\), a suitable trial solution is constructed to serve as the initial condition in \eqref{e_original model1} $(\beta = 0,\,\gamma = 1)$. We review the axisymmetric \(Q = 1\) configuration constructed above, now denoted by \(\mathbf{n} = (n_1, n_2, n_3)\). Afterward, a transformation \cite{komineas1998vortex} is applied:
\begin{equation*} 
  \setlength{\abovedisplayskip}{4pt} 
  m_1 = 2n_3n_1,\,m_2 = 2n_3n_2,\,m_3=2n_3^2 - 1. 
  \setlength{\belowdisplayskip}{4pt}  
\end{equation*} 

Te static skyrmions, with \(Q = 1\) and \(Q = 0\), are presented from a three-dimensional perspective in Figures \ref{fig6}(a) and \ref{fig6}(b), respectively. The local state structures with \(Q = 1\) and \(Q = 0\) are displayed in Figures. \ref{fig6}(c) and \ref{fig6}(d), respectively. From Figures. \ref{fig6}(e) and \ref{fig6}(f), it shows the projection of the \(Q = 1\) and \(Q = 0\) skyrmion onto the \(x_1x_2-\)plane. 
All results are consistent with the computational findings presented in \cite{komineas2015skyrmion} and with the experimental images corresponding to \(\phi = \pi/2\) in \cite{gungordu2016stability}.

\section{Conclusion}
In this paper, we present and analyze a linear and fully discrete finite difference scheme to the Landau–Lifshitz–Gilbert (LLG) equation. This numerical scheme is straightforward to implement, and capable of simultaneously preserving the non-convex constraint \(|\mathbf{m}| = 1\), as well as ensuring an unconditional original energy dissipation of the original system. Moreover, through a rigorous error analysis, it is proved that the proposed scheme is able to achieve an optimal rate error estimate in the \(L^\infty(0,\,T;\,L^2(\Omega))\) and \(L^2(0,\,T;H^1(\Omega))\) norms. Such a theoretical justification of convergence analysis and error estimate turns out to be highly challenging, due to the highly complicated nonlinear structures in the numerical design. An equivalent weak form of the original numerical scheme, which comes from a point-wise length preservation of the numerical solution, has played an essential role in the theoretical derivation, since the difficulty associated with the error estimate of a nonlinear Laplacian term has been overcome. In addition, motivated by the finite difference weak form, we plan to extend the proposed numerical design and theoretical analysis to the mass-lumped finite element method in the future works. It is anticipated that the error analysis for the finite element version will differ from traditional finite element approaches, which constitutes a promising direction for future research works.
\bibliographystyle{siamplain}
\bibliography{ref}

\begin{thebibliography}{10}

\bibitem{akrivis2021higher}
{\sc G.~Akrivis, M.~Feischl, B.~Kov{\'a}cs, and C.~Lubich}, {\em Higher-order linearly implicit full discretization of the {L}andau--{L}ifshitz--{G}ilbert equation}, Mathematics of Computation, 90 (2021), pp.~995--1038.

\bibitem{alouges2008new}
{\sc F.~Alouges}, {\em A new finite element scheme for {L}andau-{L}ifchitz equations}, Discrete Contin. Dyn. Syst. Ser. S, 1 (2008), pp.~187--196.

\bibitem{alouges2006convergence}
{\sc F.~Alouges and P.~Jaisson}, {\em Convergence of a finite element discretization for the {L}andau--{L}ifshitz equations in micromagnetism}, Mathematical Models and Methods in Applied Sciences, 16 (2006), pp.~299--316.

\bibitem{an2021optimal}
{\sc R.~An, H.~Gao, and W.~Sun}, {\em Optimal error analysis of {E}uler and {C}rank--{N}icolson projection finite difference schemes for {L}andau--{L}ifshitz equation}, SIAM Journal on Numerical Analysis, 59 (2021), pp.~1639--1662.

\bibitem{an2025optimal}
{\sc R.~An, Y.~Li, and W.~Sun}, {\em Optimal error analysis of the normalized tangent plane {FEM} for {L}andau--{L}ifshitz--{G}ilbert equation}, IMA Journal of Numerical Analysis, 45 (2025), pp.~3109--3137.

\bibitem{an2022analysis}
{\sc R.~An and W.~Sun}, {\em Analysis of backward euler projection {FEM} for the {L}andau--{L}ifshitz equation}, IMA Journal of Numerical Analysis, 42 (2022), pp.~2336--2360.

\bibitem{badia2011finite}
{\sc S.~Badia, F.~Guill{\'e}n-Gonz{\'a}lez, and J.~V. Guti{\'e}rrez-Santacreu}, {\em Finite element approximation of nematic liquid crystal flows using a saddle-point structure}, Journal of Computational Physics, 230 (2011), pp.~1686--1706.

\bibitem{bao2013optimal}
{\sc W.~Bao and Y.~Cai}, {\em Optimal error estimates of finite difference methods for the {G}ross-{P}itaevskii equation with angular momentum rotation}, Mathematics of Computation, 82 (2013), pp.~99--128.

\bibitem{bartels2008numerical}
{\sc S.~Bartels, J.~Ko, and A.~Prohl}, {\em Numerical analysis of an explicit approximation scheme for the {L}andau-{L}ifshitz-{G}ilbert equation}, Mathematics of Computation, 77 (2008), pp.~773--788.

\bibitem{bogdanov1994thermodynamically}
{\sc A.~Bogdanov and A.~Hubert}, {\em Thermodynamically stable magnetic vortex states in magnetic crystals}, Journal of magnetism and magnetic materials, 138 (1994), pp.~255--269.

\bibitem{bogdanov1999stability}
{\sc A.~Bogdanov and A.~Hubert}, {\em The stability of vortex-like structures in uniaxial ferromagnets}, Journal of magnetism and magnetic materials, 195 (1999), pp.~182--192.

\bibitem{cai2022second}
{\sc Y.~Cai, J.~Chen, C.~Wang, and C.~Xie}, {\em A second-order numerical method for {L}andau-{L}ifshitz-{G}ilbert equation with large damping parameters}, Journal of Computational Physics, 451 (2022), p.~110831.

\bibitem{Cai2023a}
{\sc Y.~Cai, J.~Chen, C.~Wang, and C.~Xie}, {\em Error analysis of a linear numerical scheme for the {Landau-Lifshitz} equation with large damping parameters}, Mathematical Methods in the Applied Sciences, 46 (2023), pp.~18592--18974.

\bibitem{ChenJ2021a}
{\sc J.~Chen, C.~Wang, and C.~Xie}, {\em Convergence analysis of a second-order semi-implicit projection method for {Landau-Lifshiz} equation}, Applied Numerical Mathematics, 168 (2021), pp.~55--74.

\bibitem{chen20b}
{\sc W.~Chen, C.~Wang, S.~Wang, X.~Wang, and S.~Wise}, {\em Energy stable numerical schemes for ternary {Cahn-Hilliard} system}, Journal of Scientific Computing, 84 (2020), p.~27.

\bibitem{chen1998evolution}
{\sc Y.~Chen and F.~H. Lin}, {\em Evolution equations with a free boundary condition}, The Journal of Geometric Analysis, 8 (1998), pp.~179--197.

\bibitem{cheng2023length}
{\sc Q.~Cheng and J.~Shen}, {\em Length preserving numerical schemes for {L}andau--{L}ifshitz equation based on {L}agrange multiplier approaches}, SIAM Journal on Scientific Computing, 45 (2023), pp.~A530--A553.

\bibitem{cohen1989relaxation}
{\sc R.~Cohen, S.-Y. Lin, and M.~Luskin}, {\em Relaxation and gradient methods for molecular orientation in liquid crystals}, Computer Physics Communications, 53 (1989), pp.~455--465.

\bibitem{du2025semi}
{\sc Q.~Du, S.~Liu, and J.~Yang}, {\em Semi-implicit projection schemes for manifold constraint gradient flows}, Journal of Scientific Computing, 103 (2025), p.~92.

\bibitem{weinan2001numerical}
{\sc W.~E and X.-P. Wang}, {\em Numerical methods for the {L}andau-{L}ifshitz equation}, SIAM journal on numerical analysis,  (2001), pp.~1647--1665.

\bibitem{gui2022convergence}
{\sc X.~Gui, B.~Li, and J.~Wang}, {\em Convergence of renormalized finite element methods for heat flow of harmonic maps}, SIAM Journal on Numerical Analysis, 60 (2022), pp.~312--338.

\bibitem{gungordu2016stability}
{\sc U.~G{\"u}ng{\"o}rd{\"u}, R.~Nepal, O.~A. Tretiakov, K.~Belashchenko, and A.~A. Kovalev}, {\em Stability of skyrmion lattices and symmetries of quasi-two-dimensional chiral magnets}, Physical Review B, 93 (2016), p.~064428.

\bibitem{guo1993landau}
{\sc B.~Guo and M.-C. Hong}, {\em The {L}andau-{L}ifshitz equation of the ferromagnetic spin chain and harmonic maps}, Calculus of Variations and Partial Differential Equations, 1 (1993), pp.~311--334.

\bibitem{HeSu07}
{\sc Y.~He and W.~Sun}, {\em Stability and convergence of the {C}rank--{N}icolson/{A}dams--{B}ashforth scheme for the time-dependent {N}avier--{S}tokes equations}, SIAM Journal on Numerical Analysis, 45 (2007), pp.~837--869.

\bibitem{jia2025electrically}
{\sc J.~Jia, J.~Ren, S.~Zhou, Z.~Zeng, H.~Lin, Y.~Hu, Z.~Li, Y.~Shen, Z.~Chen, X.~Chen, et~al.}, {\em Electrically tuning photonic topological quasiparticles in synthetic two-level system}, Nature Physics,  (2025), pp.~1--8.

\bibitem{jiang2015blowing}
{\sc W.~Jiang, P.~Upadhyaya, W.~Zhang, G.~Yu, M.~B. Jungfleisch, F.~Y. Fradin, J.~E. Pearson, Y.~Tserkovnyak, K.~L. Wang, O.~Heinonen, et~al.}, {\em Blowing magnetic skyrmion bubbles}, Science, 349 (2015), pp.~283--286.

\bibitem{kim2017mimetic}
{\sc E.~Kim and K.~Lipnikov}, {\em The mimetic finite difference method for the {L}andau--{L}ifshitz equation}, Journal of Computational Physics, 328 (2017), pp.~109--130.

\bibitem{komineas1998vortex}
{\sc S.~Komineas and N.~Papanicolaou}, {\em Vortex dynamics in two-dimensional antiferromagnets}, Nonlinearity, 11 (1998), p.~265.

\bibitem{komineas2015skyrmion}
{\sc S.~Komineas and N.~Papanicolaou}, {\em Skyrmion dynamics in chiral ferromagnets}, Physical Review B, 92 (2015), p.~064412.

\bibitem{landau1992theory}
{\sc L.~Landau and E.~Lifshitz}, {\em On the theory of the dispersion of magnetic permeability in ferromagnetic bodies}, in Perspectives in Theoretical Physics, Elsevier, 1992, pp.~51--65.

\bibitem{li2026class}
{\sc X.~Li, J.~Shen, and N.~Zheng}, {\em On a class of higher-order length preserving and energy decreasing {IMEX} schemes for the {L}andau-{L}ifshitz equation}, Journal of Computational Physics,  (2026), p.~114786.

\bibitem{li2005numerical}
{\sc Z.~Li, L.~Vulkov, and J.~W{\'a}sniewski}, {\em Numerical {A}nalysis and Its {A}pplications: Third International Conference}, vol.~3401, Springer, 2005.

\bibitem{liu2000approximation}
{\sc C.~Liu and N.~J. Walkington}, {\em Approximation of liquid crystal flows}, SIAM Journal on Numerical Analysis, 37 (2000), pp.~725--741.

\bibitem{nagaosa2013topological}
{\sc N.~Nagaosa and Y.~Tokura}, {\em Topological properties and dynamics of magnetic skyrmions}, Nature nanotechnology, 8 (2013), pp.~899--911.

\bibitem{prohl2001computational}
{\sc A.~Prohl et~al.}, {\em Computational micromagnetism}, Springer, 2001.

\bibitem{romming2013writing}
{\sc N.~Romming, C.~Hanneken, M.~Menzel, J.~E. Bickel, B.~Wolter, K.~Von~Bergmann, A.~Kubetzka, and R.~Wiesendanger}, {\em Writing and deleting single magnetic skyrmions}, Science, 341 (2013), pp.~636--639.

\bibitem{samarskii1976difference}
{\sc A.~Samarskii and V.~Andreev}, {\em Difference methods for elliptic equations(russian book)}, Moscow, Izdatel'stvo Nauka, 1976. 352,  (1976).

\bibitem{shen1990long}
{\sc J.~Shen}, {\em Long time stability and convergence for fully discrete nonlinear {G}alerkin methods}, Applicable Analysis, 38 (1990), pp.~201--229.

\bibitem{suess2002time}
{\sc D.~Suess, V.~Tsiantos, T.~Schrefl, J.~Fidler, W.~Scholz, H.~Forster, R.~Dittrich, and J.~J. Miles}, {\em Time resolved micromagnetics using a preconditioned time integration method}, Journal of Magnetism and Magnetic Materials, 248 (2002), pp.~298--311.

\bibitem{xie20a}
{\sc C.~Xie, J.~Garcia-Cervera, C.~Wang, Z.~Zhou, and J.~Chen}, {\em Second-order semi-implicit projection methods for micromagnetics simulations}, Journal of Compututational Physics, 404 (2020), p.~109104.

\bibitem{yang1998dynamical}
{\sc B.~Yang and D.~R. Fredkin}, {\em Dynamical micromagnetics by the finite element method}, IEEE transactions on magnetics, 34 (1998), pp.~3842--3852.

\bibitem{yu1995general}
{\sc Z.~Yu-lin}, {\em General interpolation formulas for spaces of discrete functions with nonuniform meshes}, Journal of Computational Mathematics,  (1995), pp.~70--92.

\end{thebibliography}
\end{document}